\documentclass[paper,onefignum,onetabnum]{siamart190516}




\usepackage{lipsum}
\usepackage{amsfonts}
\usepackage{graphicx}
\usepackage{epstopdf}
\usepackage{algorithmic}

\ifpdf
\hypersetup{
  pdftitle={...},
  pdfauthor={...}
}
\fi





\newsiamremark{remark}{Remark}
\crefname{remark}{Remark}{Remark}


\newcommand{\NN}{\mathbb{N}}
\newcommand{\QQ}{\mathbb{Q}}
\newcommand{\RR}{\mathbb{R}}

\newcommand{\cC}{\mathcal{C}}

\newcommand{\cK}{\mathcal{K}}

\newcommand{\cR}{\mathcal{R}}
\newcommand{\cS}{\mathcal{S}}

\newcommand{\id}{\operatorname{id}}

\newcommand{\linspan}{\operatorname{span}}

\newcommand{\rd}{\mathrm{d}}

\newcommand{\re}{\mathrm{e}}


\headers{Compactness of Fixed Point Maps}{G. Dirr}

\title{Compactness of Fixed Point Maps and the Ball-Marsden-Slemrod Conjecture\thanks{Submitted to the editors DATE.}
  }

\author{Gunther Dirr\thanks{Institute of Mathematics, University of W\"{u}rzburg, Emil-Fischer-Str.~40, 97074 W\"{u}rzburg, Germany 
  (\email{dirr@mathematik.uni-wuerzburg.de}).} 
}

\begin{document}

\maketitle

\begin{abstract}
  Given a parameter dependent fixed point equation $x = F(x,u)$, we derive an abstract compactness principle for the fixed
  point map $u \mapsto x^*(u)$ under the assumptions that (i) the fixed point equation can be solved by the contraction
  principle and (ii) the map $u \mapsto F(x,u)$ is compact for fixed $x$.

  This result is applied to infinite-dimensional, semi-linear control systems and their reachable sets. More precisely,
  we extend a non-controllability result of Ball, Marsden, and Slemrod \cite{BMS1982} to semi-linear systems. First we
  consider $L^p$-controls, $p>1$. Subsequently we analyze the case $p=1$.
\end{abstract}

\begin{keywords}
  Compact operators, compactness of fixed point maps, reachable/attainable sets, controllability of bilinear/semi-linear
  systems, Ball-Marsden-Slemrod conjecture
\end{keywords}

\begin{AMS}
  93B05, 93C20, 93C25, 47H10, 47J35
\end{AMS}

\section{Introduction}
\label{sec:intro}

This paper was inspired by a recent result of Boussa\"{i}d, Capo\-nigro, and Chambrion \cite{BCC2019a} (see also
\cite{BCC2019b,BCC2020}) who addressed an ``old'' conjecture of Ball, Marsden, and Slemrod \cite{BMS1982} about the
non-controllability of bilinear control systems with infinite-dimensional state space. For similar
results concerning the linear case we refer to e.g..~\cite{Triggiani1975}.

To be more precise, let us consider a semi-linear
control system on a (possibly complex) Banach space $X$ given by
\begin{equation}
  \label{eq1:BMS}
  \dot{x}(t) = Ax(t) + u(t)f\big(x(t)\big)\,, \quad x(0) = \xi_0 \in X
\end{equation}
where $A: D(A) \to X$ is the (possibly unbounded) infinitesimal generator of a $C^0$-semigroup of bounded linear operators 
while $f:X \to X$ is supposed to be locally Lipschitz\footnote{Actually, in \cite{BMS1982} the authors assume $f$ to be $C^1$
  but their existence result (cf.~Thm.~2.5) works for locally Lipschitz maps as well.} and linearly bounded. Moreover,
the control $t \mapsto u(t)$ is assumed to be in some appropriate $L^p$-space with\footnote{As usual, the notation
  $p \geq 1$ includes the case $p = \infty$.} $p \geq 1$. Obviously, \eqref{eq1:BMS} boils down to a bilinear control system
if $f: X \to X$ is linear.

We call $x: [0,T] \to X$ a \emph{classical solution}\footnote{In particular, if $A$ generates an analytic
    semigroup, the concept of a ``classical solution'' is often weakened in the sense that $x: [0,T] \to X$ is required to
    be continuous on the closed interval $[0,T]$ but continuously differentiable only on the half-open interval $(0,T]$.
    Here, for simplicity, we work with the stronger notion defined above.} of \eqref{eq1:BMS} if $x$ is continuously
differentiable and satisfies \eqref{eq1:BMS} for all $t \in [0,T]$ and \emph{Carath\'{e}odory solution}\footnote{In
    the literature Carath\'{e}odory solutions are also called \emph{strong solutions}, cf.~\cite[Sec.~4.2]{Pazy1983}.
    Moreover, absolute continuity of $x:[0,T] \to X$ is in general not sufficient to guarantee differentiability
      almost everywhere. The implication
      ``absolute continuity $\Longrightarrow$ differentiability almost everywhere with $L^1$-derivative''
      holds if and only if $X$ has the Radon-Nikodym property which is the case for all reflexive and, in particular, all finite
      dimensional Banach spaces, cf.\cite[Thm.~1.2.6 and Cor.1.2.7]{ABHN2011}.}
  if $x$ is continuous, almost everywhere differentiable on $[0,T]$ with $L^1$-derivative and satisfies \eqref{eq1:BMS}
  for almost all $t \in [0,T]$ whereas a \emph{mild solution} of \eqref{eq1:BMS} has to fulfill the integral equation
\begin{equation}
  \label{eq2:BMS}
  x(t) = \re^{tA}\xi_0 + \int_0^t\re^{(t-s)A} u(s)f\big(x(s)\big) \,\rd s
\end{equation}
for all $t \in [0,T]$. Here, $\big(\re^{tA}\big)_{t \geq 0}$ denotes the strongly continuous one-parameter semigroup of linear
operators generated by $A$. It is well known that \eqref{eq2:BMS} has a unique mild solution, denoted by $x(\cdot,\xi_0, u)$,
once $\xi_0 \in X$ and $u \in L^1\big([0,T],\RR\big)$ are fixed, cf.~\cite{BMS1982} or \cite[Sec.~6.1]{Pazy1983}.
For finite-dimensional $X$, the concepts of mild and Carath\'{e}odory solutions coincide,
  cf.~\cite[Chap.~2]{CodLev87} or \cite[App.~C]{Sontag98}, as can be seen by the fundamental theorem of calculus
  \cite[Thm.~7.11 and Thm.~7.20]{Rudin87} and the fact that in finite dimensions every strongly continuous one-parameter
  semigroup is actually norm-continuous and thus given by the exponential series $\re^{At} = \sum_{k=0}^\infty \frac{t^kA^k}{k!}$.
For infinite-dimensional $X$, however, only the implications
\begin{equation*}
  \text{classical} \quad \Longrightarrow \quad \text{Carath\'{e}odory} \quad \Longrightarrow \quad \text{mild}
\end{equation*}
remain true, cf.~\cite[Sec.~6.1]{Pazy1983}. Further references including those to relevant counter-examples are given in
the bibliographical notes of \cite{Pazy1983}.

In the sequel, we completely focus on mild solutions. The \emph{reachable set} $\cR^p(\xi_0)$ of \eqref{eq1:BMS} for
the initial value $\xi_0 \in X$ and $L^p$ controls is then defined as:
\begin{equation*}
  \cR^p(\xi_0) := \bigcup_{T \geq 0} \cR^p_{\leq T}(\xi_0)\,,
\end{equation*}
where  
\begin{equation*}
  \cR^p_{\leq T}(\xi_0) := \big\{x(t,\xi_0,u)  :  t \in [0,T]\,, u \in L^p\big([0,T],\RR\big)\big\}\,.
\end{equation*}
denotes the \emph{reachable set up to time} $T$ and $x(\cdot,\xi_0,u)$ the corresponding unique mild solution.
Finally, \eqref{eq1:BMS} is called \emph{controllable} under $L^p$ controls if $\cR^p(\xi_0) = X$ for all $\xi_0 \in X$
or, equivalently, if for any pair $\xi_0,\eta_0 \in X$ there exist $T \geq 0$ and $u \in L^p\big([0,T],\RR\big)$ such that
$x(T,\xi_0,u) = \eta_0$. 

Ball, Marsden, and Slemrod \cite{BMS1982} showed that a bilinear system\footnote{Except for minor modifications, the
  proof in \cite{BMS1982} should work for Lipschitz continuous $f$ as well.} (i.e.~a system of the above form with $f$
being linear and bounded) is never controllable via $L^p$-controls if $X$ is infinite-dimensional and $p >1$, cf.~\cite[Thm.~3.6]{BMS1982}.
Actually, they proved that for $p >1$ the reachable set $\cR^p(\xi_0)$ of any $\xi_0 \in X$ is a countable
  union of (relatively) compact sets and has therefore no interior points in $X$ if $X$ is an infinite dimensional Banach space.
Moreover, they conjectured that this also holds
for $p=1$, cf.~\cite[Rem.~3.8]{BMS1982}. Recently, Lampart \cite{Lampart2021} derived an even more striking
  non-controllability result for the Schr\"odinger equation on the unbounded domain $\RR^n$ -- but again under the assumption
  $p>1$. So the original Ball, Marsden, and Slemrod conjecture remained open for almost 40 years till Boussa\"{i}d et
  al.~\cite{BCC2019b,BCC2019a,BCC2020} were able to answer it for the bilinear case in the affirmative.
  Thereafter, Chambrion and Thomann \cite{ChTh2019,ChTh2020} derived first results for abstract semi-linear
  systems with applications to non-linear wave and Schr\"odinger equations. In these two series of papers, the authors
  basically exploit -- besides Baire's category theorem -- two\footnote{In \cite{BCC2019b,BCC2020} the Hilbert
    space case allows a different treatment via Radon-measure-valued (impulsive) controls; in \cite{ChTh2020} the authors
    take advantage of additional smoothing properties of the linear part of the equation.} facts:\medskip
\begin{itemize}
\item
  The standard fixed point iteration\footnote{the Dyson series} for solving \eqref{eq2:BMS}
    yields a uniform estimate of the following form
  \begin{equation*}
    \|x_k(t,\xi_0,u) - x_{k-1}(t,\xi_0,u)\| \leq \frac{M^{k+1} \re^{(k+1)wt}\|f\|^k \big(\|u\big|_{[0,t]}\|_1\big)^k}{k!} \|\xi_0\|\,,
  \end{equation*}
  where $x_k(\cdot,\xi_0,u)$ denotes $k$-th iteration and $M >0$, $\omega >0$ are suitable constants,
  cf.~e.g.~\cite[Prop.~11 and Prop.~15]{BCC2019a}.\smallskip
\item
  The ``approximate'' reachable sets 
  given as the sets of all states which can be reached up to time $T$ from $\xi_0$ by the $k$-th iterates
  $x_k(\cdot,\xi_0,u)$ are relatively compact, cf.~\cite[Lemma 12 and proof of Thm.~2]{BCC2019a}.\medskip
\end{itemize}
Analyzing these ideas naturally raises the question whether this result can be derived from a more general principle
which guarantees that relative compactness of the ``approximate''  reachable sets passes over to their limit, i.e.~to
the reachable set. More precisely, we are interested in the following  problem:\\[2mm]
{\it Given a parameter-dependent fixed point equation
\begin{equation}
  \label{eq:fixed_point-eq}
  x = F(x,u)
\end{equation}
such that \eqref{eq:fixed_point-eq} has a unique solution $x^*(u)$ for all $u$. What can be said about the
compactness of the map $u \mapsto x^*(u)$ under the assumption that $u \mapsto F(x,u)$ is compact for fixed $x$?}

\medskip
In Section \ref{sec:CP_for_FPM}, we prove an abstract compactness principle for parameter-dependent fixed point maps,
cf.~Theorem~\ref{thm:main_1}, under the additional assumption that \eqref{eq:fixed_point-eq} can be solved via the
contraction principle (Banach's fixed point theorem). Moreover, we derive some corollaries of Theorem \ref{thm:main_1}
which turn out to be quite useful for applications to ODEs and PDEs. In Subsection \ref{subsec:application_p>1},
we use our previous findings to show that the fixed point map $u \mapsto x(\,\cdot\,,\xi_0,u)$ of \eqref{eq2:BMS} is
compact for $p > 1$. This immediately allows us 
to generalize the non-controllability result of Ball, Marsden, and Slemrod to semi-linear systems. Unfortunately,
for $p = 1$ the fixed point map $u \mapsto x(\,\cdot\,,\xi_0,u)$ fails to be compact. Nevertheless, in Subsection
\ref{subsec:application_p=1} a ``minor'' detour guided by Corollary \ref{cor:main_3a} allows us to prove relative
compactness of the reachable sets
\begin{equation*}
  \cR^{1,r}_{\leq T}(\xi_0) := \{x(t,\xi_0,u)  :  t \in [0,T]\,, \|u\|_1 \leq r \}\,.
\end{equation*}
For this reason, we can finally extend
the non-controllability statement by Ball, Marsden, and Slemrod to semi-linear systems and all $p \geq 1$.
Moreover, we expect that the derived compactness principle will reveal further non-controllability
results for infinite systems  whenever a contraction argument can be applied to ``construct'' solutions.

\medskip
Concluding, we should notice that there are of course results on ``exact'' (local) controllability
  for infinite dimensional systems. Basically they fall into two categories:
  Either (local) controllability is obtained in a space of higher regularity which is often compactly embedded
  in the original one (cf.~\cite{Beauchard2011} and the references therein) or they fit not into the above
  setting like boundary control problems (cf.~\cite{LaTri1991,Zuazua1988}).


\section{Compactness Principle for Fixed Point Maps}
\label{sec:CP_for_FPM}

\subsection*{Notation and Preliminaries}
\label{subsec:Notation and Preliminaries}

Let $X$ be an arbitrary metric space. Its metric is usually denoted by $d$; its open and closed balls of radius
$r \geq 0$ and center $x$ are referred to as $B_r(x)$ and $K_r(x)$, respectively. Another metric $d'$ on $X$ is
called \emph{strongly equivalent} to $d$ if there exist constants  $M > 0$ and $M' > 0$
such that the estimates 
\begin{equation}
  \label{eq:equivalent_metrics}
  d(x,y) \leq M' d'(x,y) \quad\text{and}\quad d'(x,y) \leq M d(x,y)
\end{equation}
are satisfied for all $x,y \in X$.

Whenever in the following products of metric (topological) spaces occur, they will be equipped with the product
topology. Obviously, if these products are finite the product topology coincides with the topology induced
by the metric
\begin{equation}
  \label{eq:product-metric_inf}
  d_\infty(x,y) := \max_{i = 1, \dots,n }d_i(x_i,y_i)
\end{equation}
or, alternatively, by
\begin{equation*}
  d_1(x,y) := \sum_{i=1}^nd_i(x_i,y_i)\,.
\end{equation*}

\begin{remark}
  \label{rem:equivalent_metrics}
  Note that the above concept of equivalence of metrics is rather ``strong'' in the sense that two metrics $d$ and $d'$ can
  generate the same topology without being \emph{strongly equivalent}. However, it guarantees that the corresponding uniform
  structures coincide and thus completeness is preserved when passing from one metric to another strongly equivalent one.
  Moreover, if $d$ and $d'$ are induced by norms then the above concept boils down to the standard notion of equivalence
  of norms.
\end{remark}

Next, let $X$, $Z$ and $P$ be metric spaces and let $F:X \times P \to Z$, $G:X \to Z$ and $H:P \to Z$ be arbitrary maps.
By the usual abuse of notation\footnote{Of course; if $X$ and $P$ are not disjoint then $F_x$ is not well-defined for
  $x \in X \cap P$. Nevertheless, for simplicity we stick to this common notation and write $F(\cdot,x)$ or $F(x,\cdot)$ instead
  of $F_x$ whenever confusion can occur.}, let $F_x$ and $F_u$ with $x \in X$ and $u \in P$ denote the partial maps
\begin{equation*}
  F_x:P \to Z\,, \quad u \mapsto F_x(u) := F(x,u)
\end{equation*}
and
\begin{equation*}
  F_u:X \to Z\,, \quad x \mapsto F_u(x) := F(x,u)\,,
\end{equation*}
respectively. Then $G:X \to Z$ is termed\medskip
\begin{itemize}
\item
  \emph{(globally) Lipschitz} if there exists a constant $L \geq 0$ such that
  \begin{equation}
    \label{eq:globally_lipschitz}
    d(G(x),G(y)) \leq L d(x,y)
  \end{equation}
  holds for all $x,y \in X$.\smallskip
\item
  \emph{locally Lipschitz} if for every $x_0 \in X$ there exists neighborhood $U_0 \subset X$ of $x_0 \in X$ and a constant
  $L \geq 0$ such that \eqref{eq:globally_lipschitz}
holds for all $x,y \in U_0$.\smallskip
\item
  \emph{Lipschitz on bounded sets} if for every bounded subset $B \subset X$ there exists a constant $L \geq 0$ such
  that \eqref{eq:globally_lipschitz}
  holds for all $x,y \in B$.
\end{itemize}

\begin{remark}
  While an arbitrary map $G:X \to Z$ between metric spaces which is Lipschitz on bounded sets
  is also locally Lipschitz, the converse is in general false. However, if $X$ has the Heine-Borel property
  (i.e.~if ``closed and bounded $=$ compact'') then both notions are equivalent.
  \end{remark}

\noindent
Moreover, the map $F:X \times P \to Z$ is said to be\medskip
\begin{itemize}
\item
  \emph{Lipschitz in $x$} if for every $u \in P$ there exists a constant $L \geq 0$ such that
\begin{equation}
  \label{eq:lipschitz_in_x}
  d(F_u(x),F_u(y)) \leq L d(x,y)
\end{equation}
holds for all $x,y \in X$.\smallskip
\item
  \emph{Lipschitz in $x$ uniformly in $u$} if there exists a constant $L \geq 0$ such that \eqref{eq:lipschitz_in_x}
holds for all $x,y \in X$ and all $u \in P$.\smallskip
\item
  \emph{Lipschitz in $x$ uniformly on bounded sets of $P$} if for every bounded set $B \subset P$ there
  exists a constant $L \geq 0$ such that \eqref{eq:lipschitz_in_x}
holds for all $x,y \in X$ and all $u \in B$.\smallskip
\item
  \emph{Lipschitz in $x$ locally uniformly in $u$} if for every $u_0 \in P$ there exists a neighborhood
  $U_0$ of $u_0 \in P$ and a constant $L \geq 0$ such that \eqref{eq:lipschitz_in_x}
holds for all $x,y \in X$ and all $u \in U_0$.
\end{itemize}

\medskip
\noindent
If in the above definitions the term ``Lipschitz'' is replaced by \emph{locally Lipschitz} then for each $x_0 \in X$
there exists a neighborhood $U_0 \subset X$ of $x_0 \in X$ such that the above conditions are satisfied on $U_0$ instead
of $X$. If ``Lipschitz'' is replaced by \emph{contractive} then the constant $L \geq 0$ can be chosen to be less
than $1$. For instance, $F$ is called \emph{contractive in $x$ uniformly on bounded sets of $P$} if for every
bounded set $B \subset P$ there exists a constant $0 \leq C < 1$ such that
\begin{equation*}
  d(F_u(x),F_u(y)) \leq C d(x,y)
\end{equation*}
is satisfied for all $x,y \in X$ and all $u \in B$.

\medskip

For $Z = X$, we say that $G:X \to X$ is an \emph{eventual contraction} if there exists a natural number
$N \in \NN$ such that $G^N:X \to X$ is a contraction. Consequently, $F:X \times P \to X$ is termed
\emph{eventual contraction in $x$ uniformly on bounded sets of $P$} if for every bounded set $B \subset P$ there
exists a natural number $N \in \NN$ and a constant $0 \leq C < 1$ such that
\begin{equation*}
  d(F^N_u(x),F^N_u(y)) \leq C d(x,y)
\end{equation*}
holds for all $x,y \in X$ and all $u \in B$. Note that the constant $C$ and the power $N \in \NN$ may depend on $B \subset P$.

\medskip

Finally, $H:P \to Z$ is called \emph{compact} if $H$ maps bounded sets of $P$ to relatively compact sets of $Z$. Recall
that in a complete metric space $Z$ a set $S \subset Z$ is \emph{relatively compact}  (i.e.~has compact closure) if and
only if it features the $\varepsilon$-\emph{net property}. This means that for all $\varepsilon > 0$ there exists a finite
$\varepsilon$-\emph{net}, i.e.~there exists a natural number $N \in \NN$ and finitely many $z_i \in S$, $i=1, \dots, N$
such that $S$ is covered by the union of the balls $B_{\varepsilon}(z_i)$, $i=1, \dots, N$. The $\varepsilon$-net property is
also called \emph{total boundedness} or \emph{precompactness}, cf.~\cite[Prelim.]{MeiseVogt97} and \cite[Thm.~A.4]{Rudin91}.
Whenever the involved metric spaces are complete we will use these terms interchangeably. Of course, in any other
case one has to distinguish clearly between relative compactness and total boundedness (precompactness).  

\medskip
\noindent
\textbf{Warning.} Here and henceforth, boundedness of sets is always meant in the metric sense, i.e.~$S \subset P$
is bounded if there exists $r \geq 0$ such that $d(x,y) \leq r$ for all $x,y \in S$.

\begin{remark}
  \label{rem:montel-map}
  \begin{enumerate}
  \item[(a)]
     Compact linear maps between normed vector spaces are always bounded and therefore continuous. However, non-linear maps
     which are compact are not necessarily continuous. Moreover, for linear maps between normed vector spaces the following
     properties are equivalent: (i) bounded sets are mapped to relatively compact ones. (ii) there exits a bounded neighborhood
     of the origin whose image is relatively compact. This equivalence clearly fails for non-linear maps.\smallskip
   \item[(b)]
     In metric spaces which carry an additional linear structure one has to be careful because: (i) boundedness in the
     metric sense is in general not equivalent to boundedness with respect to the underlying vector space topology\footnote{In
       a topological vector space $T$ a subset $B \subset T$ is called (topologically) bounded if for every neighborhood
       $U_0$ of the origin there exits a scalar $r > 0$ such that $B \subset r U_0$ holds. If the topology arises from
       a single norm then both concepts of boundedness coincide. If not -- for instance in ``proper'' Fr\'{e}chet spaces
       -- the two notions differ significantly.}.
     (ii) An equivalence similar to that mentioned in part (a) does not hold any longer as bounded neighborhoods of the
     origin may not exist. This discrepancy led even in the linear case to different notions of compact maps in Fr\'{e}chet
     spaces, cf.~\cite{BonLin94}. 
     %
     %
   \end{enumerate}
\end{remark}

\noindent
After these preliminaries, we can state the main result of this section.

\begin{theorem}[Compactness Principle]
  \label{thm:main_1}
  Let $X$ and $P$ be complete metric spaces and let $F:X \times P \to X$ satisfy the following conditions:\smallskip
  \begin{enumerate}
  \item[(a)]
    $F: X \times P \to X$ is a contraction in $x$ uniformly on bounded sets of $P$.\smallskip
  \item[(b)]
    $F_x:P \to X$ is continuous for all $x \in X$.\smallskip
  \item[(c)]
    $F_x:P \to X$ is compact for all $x \in X$.\smallskip
  \end{enumerate}
  Then the well-defined fixed point map $\Phi: P \to X$  which assigns to each $u \in P$ the unique fixed
  point of $F_u(\cdot)$, i.e.~$\Phi(u) = F\big(\Phi(u),u\big)$, is continuous and compact. If additionally\smallskip
  \begin{enumerate}
  \item[(d)]
    $F:X \times P \to X$ is locally Lipschitz in $u$ locally uniformly in $x$\smallskip
  \end{enumerate}
  then $\Phi: P \to X$ is also locally Lipschitz.
\end{theorem}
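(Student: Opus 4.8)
The plan is to establish the four assertions in turn, the compactness statement being the substantial one.

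First I would observe that every singleton $\{u\}\subset P$ is bounded, so (a) makes $F_u$ a genuine contraction on the complete space $X$, and Banach's fixed point theorem supplies the unique fixed point $\Phi(u)$; thus $\Phi$ is well defined. For continuity, fix $u\in P$ and $u_n\to u$. The set $B:=\{u_n\}\cup\{u\}$ is bounded (a convergent sequence together with its limit is bounded), so (a) furnishes a single constant $0\le C<1$ valid on $B$. Inserting $F(\Phi(u),u_n)$ into $d(\Phi(u_n),\Phi(u))=d\big(F(\Phi(u_n),u_n),F(\Phi(u),u)\big)$ and using the contraction property gives
\begin{equation*}
  (1-C)\,d\big(\Phi(u_n),\Phi(u)\big)\ \le\ d\big(F_{\Phi(u)}(u_n),F_{\Phi(u)}(u)\big)\ \longrightarrow\ 0
\end{equation*}
by the continuity hypothesis (b); hence $\Phi$ is continuous.

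For compactness, fix a bounded $B\subset P$ with contraction constant $0\le C<1$, pick $x_0\in X$, and introduce the Picard iterates $\Phi_0\equiv x_0$, $\Phi_{k+1}(u):=F(\Phi_k(u),u)$. Comparing $\Phi(u)=F(\Phi(u),u)$ with $F(x_0,u)$ exactly as above yields $(1-C)\,d(\Phi(u),x_0)\le d(F_{x_0}(u),x_0)$, whose right-hand side is bounded over $u\in B$ because $F_{x_0}(B)$ is relatively compact, hence bounded, by (c); so $\Phi(B)$ is bounded, say $d(\Phi(u),x_0)\le\rho$ on $B$. The standard a priori estimate then gives $\sup_{u\in B}d(\Phi_k(u),\Phi(u))\le\rho\,C^k\to 0$, i.e.\ $\Phi_k\to\Phi$ uniformly on $B$. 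Since a uniform limit of maps with totally bounded images again has totally bounded image (given $\varepsilon$, combine an $\varepsilon/2$-net for $\Phi_k(B)$ with $k$ so large that $\sup_{u\in B}d(\Phi_k(u),\Phi(u))<\varepsilon/2$), it remains only to show that each $\Phi_k(B)$ is relatively compact. This is the crux, and the main obstacle, because the compactness in (c) is only available with the first argument of $F$ frozen. I would argue by induction on $k$: the cases $\Phi_0(B)=\{x_0\}$ and $\Phi_1(B)=F_{x_0}(B)$ are immediate from (c). For the step, assume $\Phi_k(B)$ is relatively compact, so $S:=\overline{\Phi_k(B)}$ is compact; given $\varepsilon>0$ cover $S$ by finitely many balls $B_\delta(s_1),\dots,B_\delta(s_p)$ with $\delta<\varepsilon$, and for each $i$ choose a finite $\varepsilon/2$-net of the totally bounded set $F_{s_i}(B)$ (using (c)), collecting these into a finite set $Y$. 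For $u\in B$ pick $i$ with $\Phi_k(u)\in B_\delta(s_i)$; then by (a)
\begin{equation*}
  d\big(\Phi_{k+1}(u),F_{s_i}(u)\big)=d\big(F(\Phi_k(u),u),F(s_i,u)\big)\le C\,d\big(\Phi_k(u),s_i\big)<\varepsilon/2,
\end{equation*}
while $F_{s_i}(u)$ lies within $\varepsilon/2$ of $Y$. Hence $Y$ is a finite $\varepsilon$-net for $\Phi_{k+1}(B)$, closing the induction and establishing compactness of $\Phi$.

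For the final claim, assuming (d), I would fix $u_0\in P$, set $x_0^*:=\Phi(u_0)$, and apply (d) at $(x_0^*,u_0)$ to obtain neighborhoods $W_0$ of $x_0^*$ and $U_0$ of $u_0$ together with a constant $L$ such that $d(F(x,u),F(x,v))\le L\,d(u,v)$ for all $x\in W_0$ and $u,v\in U_0$. Invoking the continuity of $\Phi$ just proved, I shrink $U_0$ to a bounded neighborhood on which $\Phi(U_0)\subset W_0$, with contraction constant $C$ supplied by (a). Inserting $F(\Phi(v),u)$ into $d(\Phi(u),\Phi(v))$ as before then gives
\begin{equation*}
  (1-C)\,d\big(\Phi(u),\Phi(v)\big)\le d\big(F_{\Phi(v)}(u),F_{\Phi(v)}(v)\big)\le L\,d(u,v),
\end{equation*}
since $\Phi(v)\in W_0$ and $u,v\in U_0$; thus $\Phi$ is Lipschitz with constant $L/(1-C)$ on $U_0$, i.e.\ locally Lipschitz.
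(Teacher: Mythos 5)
Your proposal is correct and follows essentially the same route as the paper: well-definedness and continuity via the uniform contraction estimate $(1-C)\,d(\Phi(v),\Phi(u))\le d(F_{\Phi(u)}(v),F_{\Phi(u)}(u))$, compactness via uniform convergence of the Picard iterates on $B$ together with an inductive proof that each iterate has relatively compact image (your induction step is precisely the paper's Lemma~\ref{lem1:precompact} combined with Corollary~\ref{cor:precompact}), and the same computation for the local Lipschitz claim. The only blemish is the choice ``$\delta<\varepsilon$'' in the induction step, which only yields $C\,d(\Phi_k(u),s_i)<\varepsilon$ rather than $<\varepsilon/2$; taking $\delta\le\varepsilon/2$ (or $\delta\le\varepsilon/(2C)$ for $C>0$) repairs this trivially.
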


\noindent
Before losing ourselves and the reader in the $\varepsilon$-details of the proof of Theorem \ref{thm:main_1}
  it is worth to sketch its road map: Lemma \ref{lem0:continuity} and the (Lipschitz) continuity of the fixed point map
  are standard results, cf.~\cite[Prop.~1.2]{Zeidler86}. In contrast, Lemma \ref{lem1:precompact} and Corollary \ref{cor:precompact}
  comprise the essential ingredients for proving compactness of the fixed point map as they guarantee that
  in each iteration of the map $F_u$ the set of iterates $\{F^n_u(x_0) \;|\; u \in U\}$ is relatively compact if $U \subset P$
  is bounded, where $x_0$ is an arbitrary, but fixed initial point. Thus, in the final proof of Theorem \ref{thm:main_1} we only
  have to ``close the gap'' between $\{F^n_u(x_0) \;|\; u \in U\}$ and $\{F^\infty_u(x_0) \;|\; u \in U\}$ which is obtained
  via the uniform estimate \eqref{eq:unifrom}.

\begin{lemma}
  \label{lem0:continuity}
  Let $F: X \times P \to X$ be locally Lipschitz in $x$ locally uniformly in $u$. \smallskip
  \begin{enumerate}
  \item[(a)]
    If $F$ satisfies property (b) of Theorem \ref{thm:main_1} then it is continuous on $X \times P$.\smallskip
  \item[(b)]
    If $F$ satisfies property (d) of Theorem \ref{thm:main_1} then it is locally Lipschitz on $X \times P$.
  \end{enumerate}
\end{lemma}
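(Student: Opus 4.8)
The plan is to prove both parts by the same device: introduce an intermediate point in which one of the two arguments of $F$ is frozen, so that each resulting term is governed by exactly one of the two hypotheses.

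For part~(a) I would fix a point $(x_0,u_0) \in X \times P$ and estimate, for $(x,u)$ near $(x_0,u_0)$,
\[
d\big(F(x,u),F(x_0,u_0)\big) \leq d\big(F(x,u),F(x_0,u)\big) + d\big(F(x_0,u),F(x_0,u_0)\big).
\]
The second summand equals $d\big(F_{x_0}(u),F_{x_0}(u_0)\big)$ and tends to $0$ as $u \to u_0$ by property~(b), the continuity of $F_{x_0}$. For the first summand I would invoke the hypothesis that $F$ is locally Lipschitz in $x$ locally uniformly in $u$: it supplies a neighborhood $U_0$ of $x_0$, a neighborhood $V_0$ of $u_0$, and a constant $L \geq 0$ with $d\big(F(x,u),F(x_0,u)\big) \leq L\,d(x,x_0)$ for all $x \in U_0$ and $u \in V_0$. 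Given $\varepsilon > 0$, I would first shrink the neighborhood of $u_0$ inside $V_0$ so that the second summand stays below $\varepsilon/2$, then shrink the neighborhood of $x_0$ inside $U_0$ so that $L\,d(x,x_0) < \varepsilon/2$; both summands are then controlled, which gives continuity at $(x_0,u_0)$.

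For part~(b) I would fix $(x_0,u_0)$ again and use the analogous decomposition, now balanced between both hypotheses,
\[
d\big(F(x,u),F(y,v)\big) \leq d\big(F(x,u),F(y,u)\big) + d\big(F(y,u),F(y,v)\big).
\]
The first term is bounded by $L_1\,d(x,y)$ on a neighborhood $U_1 \times V_1$ of $(x_0,u_0)$ by the local-Lipschitz-in-$x$ hypothesis, while the second is bounded by $L_2\,d(u,v)$ on a neighborhood $U_2 \times V_2$ of $(x_0,u_0)$ by property~(d), the local-Lipschitz-in-$u$-locally-uniformly-in-$x$ hypothesis. Setting $U_0 := U_1 \cap U_2$ and $V_0 := V_1 \cap V_2$ and restricting both $(x,u)$ and $(y,v)$ to $U_0 \times V_0$, the two bounds combine to
\[
d\big(F(x,u),F(y,v)\big) \leq L_1\,d(x,y) + L_2\,d(u,v) \leq (L_1+L_2)\,d_\infty\big((x,u),(y,v)\big),
\]
the desired local Lipschitz estimate on the product neighborhood $U_0 \times V_0$, with respect to the metric $d_\infty$ of \eqref{eq:product-metric_inf}.

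The only delicate point I anticipate is the bookkeeping of the quantifier order in the two ``locally uniform'' notions: in each decomposition the frozen argument must range over a full neighborhood rather than sit at the base point, so one has to check that the endpoints $y$ and $u$ occupying the frozen slots genuinely lie in the neighborhoods furnished by the respective hypotheses. This is secured by intersecting the finitely many neighborhoods before combining the estimates. Everything else reduces to the triangle inequality together with the choice of the product metric.
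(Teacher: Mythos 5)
Your proof is correct and is precisely the routine triangle-inequality argument the paper has in mind (the author omits the proof of Lemma \ref{lem0:continuity} entirely, marking it as trivial). The quantifier bookkeeping you flag — ensuring the frozen arguments $u$ and $y$ range over the neighborhoods supplied by the respective hypotheses, secured by intersecting them — is exactly the one point worth writing down, and you handle it properly.
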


\noindent
The straightforward proof is left to the reader.

\begin{lemma}
  \label{lem1:precompact}
  Let $X$ and $P$ be complete metric spaces. Moreover, let $K \subset X$ be relatively compact and $B \subset P$ be
  bounded. If $F: X \times P \to X$ is Lipschitz in $x$ uniformly on bounded sets of $P$ and satisfies property (c)
  of Theorem \ref{thm:main_1} then $F(K \times B)$ is also relatively compact.
\end{lemma}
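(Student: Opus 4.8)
The plan is to show directly that $F(K\times B)$ is totally bounded; since $X$ is complete, total boundedness is equivalent to relative compactness, so this suffices. Fix $\varepsilon > 0$. The key structural input is property~(c), which guarantees that for each individual point $x\in X$ the slice $F_x(B)=F(\{x\}\times B)$ is relatively compact, hence totally bounded. The difficulty is that a priori these slices vary with $x$ and there are uncountably many of them; the Lipschitz hypothesis is precisely what lets me reduce to finitely many slices.

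First I would invoke the uniform Lipschitz property on the bounded set $B$: there is a constant $L\ge 0$ with $d\big(F(x,u),F(y,u)\big)\le L\,d(x,y)$ for all $x,y\in X$ and all $u\in B$. The essential point is that this single $L$ works simultaneously for every $u\in B$, which is exactly why the hypothesis is phrased ``uniformly on bounded sets of $P$'' rather than merely ``Lipschitz in $x$.'' The degenerate case $L=0$ is immediate, since then $F(K\times B)=F_{x_0}(B)$ for any fixed $x_0$, which is relatively compact by~(c).

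Next I would exploit that $K$, being relatively compact, is totally bounded, and choose a finite $\frac{\varepsilon}{2L}$-net $x_1,\dots,x_n\in K$. For each $i$, property~(c) makes $F_{x_i}(B)$ totally bounded, so I pick a finite $\frac{\varepsilon}{2}$-net $z_{i,1},\dots,z_{i,m_i}$ for it. I then claim the finite family $\{z_{i,j}\}$ is an $\varepsilon$-net for $F(K\times B)$: given any $F(x,u)$ with $x\in K$ and $u\in B$, I select $x_i$ with $d(x,x_i)<\frac{\varepsilon}{2L}$, so that $d\big(F(x,u),F(x_i,u)\big)\le L\,d(x,x_i)<\frac{\varepsilon}{2}$ by the Lipschitz estimate, and then select $z_{i,j}$ within $\frac{\varepsilon}{2}$ of $F(x_i,u)\in F_{x_i}(B)$; the triangle inequality gives $d\big(F(x,u),z_{i,j}\big)<\varepsilon$.

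Since $\varepsilon>0$ was arbitrary, $F(K\times B)$ admits a finite $\varepsilon$-net for every $\varepsilon$, hence is totally bounded and therefore relatively compact in the complete space $X$. The only real care needed is the bookkeeping of the two radii, so that the Lipschitz blow-up of the $K$-net is absorbed, and the observation that a finite union (over the finitely many $x_i$) of finite nets remains finite. I expect no genuine obstacle beyond this coordination once the uniform constant $L$ is in hand.
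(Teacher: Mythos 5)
Your proof is correct and follows essentially the same route as the paper: a finite $\frac{\varepsilon}{2L}$-net of $K$, relative compactness of the finitely many slices $F_{x_i}(B)$ from property (c), and the triangle inequality to assemble an $\varepsilon$-net of $F(K\times B)$. The only cosmetic differences are that you take a separate net for each slice rather than one net for their union, and that you explicitly dispose of the degenerate case $L=0$.
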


\begin{proof}
  Let $B \subset P$ be bounded and $\varepsilon > 0$. Then, by assumption, there exists a constant $L > 0$ such that
  \begin{equation}
    \label{eq:lip-1}
    d(F_u(x),F_u(y)) \leq L d(x,y)
  \end{equation}
  holds for all $x,y \in X$ and all $u \in B$. Moreover, as $K$ is relatively compact we can choose a finite
  $\frac{\varepsilon}{2L}$-net $x_i$, $i = 1, \dots, N$ of $K$. Then due to condition (c) the images
  $F(x_i, B)$, $i = 1, \dots, N$ and thus their union are relatively compact. Hence one can find a finite
  $\frac{\varepsilon}{2}$-net $y_j$, $j = 1, \dots, M$ of $\bigcup_{i=1}^N F(x_i,B)$. Now for arbitrary $x \in K$
  and $u \in B$ one can choose $x_i$ and $y_j$ such that $x \in B_{\varepsilon/2L}(x_i)$ and $F(x_i,u) \in B_{\varepsilon/2}(y_j)$.
  Consequently, \eqref{eq:lip-1} implies
  \begin{equation*}
      d\big(F(x,u),y_j\big)  \leq d\big(F(x,u),F(x_i,u)\big) + d\big(F(x_i,u),y_j\big)
      < L d(x,x_i) + \frac{\varepsilon}{2} < \varepsilon 
  \end{equation*}
  i.e.~$y_1, \dots, y_M$ yields a finite $\varepsilon$-net of $F(K \times B)$. Hence $F(K \times B)$ is totally
  bounded and thus relatively compact.
\end{proof}

\begin{corollary}
  \label{cor:precompact}
  Let $X$ and $P$ be complete metric spaces and let $B \subset P$ be bounded.
  Moreover, for $x_0 \in X$ define
  \begin{equation*}
    W_0(x_0) := \{x_0\} \quad \text{and} \quad W_{n+1}(x_0) := F(W_n \times B)\,.
  \end{equation*}
  If $F: X \times P \to X$ is Lipschitz in $x$ uniformly on bounded sets of $P$ and satisfies property (c) of Theorem
  \ref{thm:main_1}, then $W_n(x_0)$ is relatively compact for all $n \in \NN_0$.
\end{corollary}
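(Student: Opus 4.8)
The plan is to argue by induction on $n \in \NN_0$, using Lemma~\ref{lem1:precompact} as the engine for the inductive step. The recursion $W_{n+1}(x_0) = F(W_n(x_0) \times B)$ is tailor-made for this lemma, so the whole statement should reduce to iterating it.

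For the base case $n = 0$, the set $W_0(x_0) = \{x_0\}$ is a singleton and hence trivially relatively compact: its closure is the finite set $\{x_0\}$, which is compact. (Equivalently, a one-point set is its own finite $\varepsilon$-net for every $\varepsilon > 0$.)

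For the inductive step, I would assume that $W_n(x_0)$ is relatively compact for some $n \in \NN_0$ and show the same for $W_{n+1}(x_0)$. Here the two standing hypotheses on $F$ — namely that $F$ is Lipschitz in $x$ uniformly on bounded sets of $P$ and that $F$ satisfies property~(c) of Theorem~\ref{thm:main_1} — carry over unchanged, since they do not depend on $n$. Setting $K := W_n(x_0)$, which is relatively compact by the induction hypothesis, and recalling that $B \subset P$ is bounded by assumption, Lemma~\ref{lem1:precompact} applies verbatim and yields that $W_{n+1}(x_0) = F(W_n(x_0) \times B) = F(K \times B)$ is relatively compact. This closes the induction.

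The one point worth flagging is that Lemma~\ref{lem1:precompact} requires its set $K$ to be \emph{relatively} compact rather than compact, and it is precisely this relative-compactness that the induction hypothesis delivers; so no intermediate passage to closures is needed, and the chain $W_0 \mapsto W_1 \mapsto \cdots$ never leaves the class of relatively compact sets. In short, I do not expect any genuine obstacle: the corollary is an immediate iteration of Lemma~\ref{lem1:precompact}, and the only care required is matching the \enquote{relatively compact} hypothesis correctly at each stage.
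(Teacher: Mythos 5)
Your proof is correct and matches the paper's intent exactly: the paper simply states that the corollary ``follows immediately from Lemma \ref{lem1:precompact}'', and the induction you spell out (singleton base case, then iterating the lemma) is precisely that immediate argument made explicit.
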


\begin{proof}
  Follows immediately from Lemma \ref{lem1:precompact}.
\end{proof}

\begin{proof}[Proof of Theorem \ref{thm:main_1}]
  It is well known, cf.~e.g.~\cite[Prop.~1.2]{Zeidler86}, that under conditions (a) and (b) the fixed point map
  $u \mapsto \Phi(u)$ is well-defined (due to the contraction principle) and continuous. For the sake of completeness
  we present the simple arguments in the following:
  
  \medskip
  \noindent
  Let $u \in P$ be given and let $B := B_1(u)$ be a bounded neighborhood of $u$. Then by assumption there
  exists a uniform contraction rate $0 \leq C < 1$ for all $v \in B$. Hence one has
  \begin{equation*}
    \begin{split}
		d\big(\Phi(v),\Phi(u)\big) &= d\big(F(\Phi(v),v),F(\Phi(u),u)\big)\\
      &  \leq d\big(F(\Phi(v),v),F(\Phi(u),v)\big) + d\big(F(\Phi(u),v),F(\Phi(u),u)\big)\\
      &  \leq C d\big(\Phi(v),\Phi(u)\big) + d\big(F(\Phi(u),v),F(\Phi(u),u)\big)\\
    \end{split}
  \end{equation*}
  and therefore
  \begin{equation*}
    d\big(\Phi(v),\Phi(u)\big) \leq \frac{1}{1-C} d\big(F(\Phi(u),v),F(\Phi(u),u)\big)
  \end{equation*}
  for all $v \in B$. Hence the continuity of the map $F_{\Phi(u)}$ shows that $\Phi$ is continuous at $u \in P$.
  If $F$ satisfies additionally condition (d) one finds a constant $L \geq 0$ and neighborhoods $U \subset X$ of
  $\Phi(u)$ and $V \subset P$ of $u \in P$ such that
  \begin{equation*}
    d\big(F(x,v),F(x,w)\big) \leq L d\big(v,w\big)
  \end{equation*}
  for all $x \in U$ and $v,w \in V$. Then the continuity of $\Phi$ at $u \in P$ implies that
  $W := V \cap \Phi^{-1}(U) \cap B$ is again a neighborhood of $u \in P$. Thus we conclude 
  \begin{equation*}
      d\big(\Phi(v),\Phi(w)\big) \leq \frac{1}{1-C} d\big(F(\Phi(v),v),F(\Phi(v),w)\big)
      \leq \frac{L}{1-C} d(v,w)
  \end{equation*}
  for all $v,w \in W$, i.e.~$\Phi$ is locally Lipschitz.

  \medskip
  \noindent
  Finally, let us show that $\Phi$ is also compact. To this end, let $B \subset P$ be bounded and $\varepsilon > 0$.
  From the standard proof of the contraction principle, cf.~e.g.~\cite[Chap.~1]{Zeidler86}, we know that $\Phi(u)$ is
  given by
  \begin{equation*}
    \Phi(u) = \lim_{n \to \infty}F^n_u(x_0)\,,
  \end{equation*}
  where $x_0 \in X$ can be an arbitrary initial point. Moreover, again from the contraction principle, 
  one has the uniform estimate
  \begin{equation}
    \label{eq:unifrom}
    d\big(F^n_u(x_0),F_u^m(x_0)\big) \leq \sum_{k=m}^{n-1} C^k d\big(F_u(x_0),x_0\big)
    \leq \frac{C^m}{1-C} d\big(F_u(x_0),x_0\big)
  \end{equation}
  for all $u \in B$ and all $n > m$. Because $F(x_0,B) \subset X$ is relatively compact by assumption, there exists a
  constant $K \geq 0$ such that $d\big(F_u(x_0),x_0\big) \leq K$ for all $u \in B$. This implies that there
  exists $N \in \NN$ such that 
  \begin{equation*}
    d\big(\Phi(u),F_u^n(x_0)\big) \leq \frac{K C^n}{1-C} < \frac{\varepsilon}{2}
  \end{equation*}
  for all $u \in B$ and all $n \geq N$. Furthermore, by Corollary \ref{cor:precompact} we conclude that
  $\{F_u^N(x_0)  :  u \in B\} \subset W^N(x_0)$ is relatively compact. Hence there exists a finite $\frac{\varepsilon}{2}$-net
  $y_1, \dots, y_M$ of $\{F_u^N(x_0)  :  u \in B\}$ and thus for all $u \in B$ we can find $y_j$ such that
  $d(F^N_u(x_0),y_j) < \frac{\varepsilon}{2}$. This yields the estimate
  \begin{equation*}
    d\big(\Phi(u),y_j\big) \leq d\big(\Phi(u),F^N_u(x_0)\big) + d\big(F^N_u(x_0),y_j\big) < \varepsilon
  \end{equation*}
  i.e.~$y_1, \dots, y_M$ yields a finite $\varepsilon$-net of $\Phi(B)$. This completes the proof.
\end{proof}

\begin{remark}
  \label{rem:alternative_proof}
  Actually, there is a beautiful alternative to prove the above theorem, cf.~\cite{Glueck}: Fix a bounded subset $B \subset P$
  and consider the function space $C_{\rm tb}(B,X) \subset C(B,X)$ of all continuous maps with totally bounded
  image. Then, under the conditions of Theorem \ref{thm:main_1}, one can show that the map $T: \Psi \mapsto T(\Psi)$ with
  $T(\Psi)(u) := F\big(\Psi(u),u\big)$ defines a contraction from $C_{\rm tb}(B,X)$ into it self. Since $C_{\rm tb}(B,X)$ is
  closed the map $T$ has a unique fixed point $\Psi_* \in C_{\rm tb}(B,X)$ which which satisfies
  $F\big(\Psi_*(u),u\big) = \Psi_*(u)$ for all $u \in B$. Hence $\Psi_*$ coincides with the restriction $\Phi\big|_B$ of
  the fixed point map. Thus the image of $\Phi$ restricted to $B$ is totally bounded ($=$ relatively compact).
\end{remark}

\medskip
\noindent
The following two corollaries of Theorem \ref{thm:main_1} are particularly useful for applications in the area of
ODEs and PDEs where the corresponding integral operators sometimes reveal their contractive behavior only after several
iterations or, alternatively, after passing to a strongly equivalent metric, cf.~Subsection \ref{subsec:application_p=1}
and \ref{subsec:application_p>1}, respectively.

\begin{corollary}
  \label{cor:main_1}
  Let $X$ and $P$ be complete metric spaces and let $F:X \times P \to X$ satisfy the following conditions:\smallskip
  \begin{enumerate}
  \item[(a)]
    $F: X \times P \to X$ is Lipschitz in $x$ uniformly on bounded sets of $P$.\smallskip
  \item[(b)]
    $F: X \times P \to X$ is an eventual contraction in $x$ uniformly on bounded sets of $P$.\smallskip
  \item[(c)]
    $F_x:P \to X$ is continuous for all $x \in X$.\smallskip
  \item[(d)]
    $F_x:P \to X$ is compact for all $x \in X$.\smallskip
  \end{enumerate}
  Then the well-defined fixed point map $\Phi: P \to X$  which assigns to each $u \in P$ the unique fixed
  point of $F_u(\cdot)$, i.e.~$\Phi(u) = F\big(\Phi(u),u\big)$, is continuous and compact. If additionally\smallskip
  \begin{enumerate}
  \item[(e)]
    $F:X \times P \to X$ is locally Lipschitz in $u$ locally uniformly in $x$\smallskip
  \end{enumerate}
  then $\Phi: P \to X$ is also locally Lipschitz in $u$.
\end{corollary}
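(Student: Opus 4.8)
The plan is to reduce Corollary \ref{cor:main_1} to Theorem \ref{thm:main_1} by passing from $F$ to a suitable power in the $x$-variable. First I would record the elementary \emph{hidden contraction} fact: if $G^N$ is a contraction on a complete metric space, then $G$ has a unique fixed point, which moreover coincides with the unique fixed point of $G^N$. Indeed, if $x^*$ is the unique fixed point of $G^N$, then $G(x^*)$ is again a fixed point of $G^N$ (because $G^N(G(x^*)) = G(G^N(x^*)) = G(x^*)$), hence $G(x^*) = x^*$ by uniqueness; conversely every fixed point of $G$ solves $G^N x = x$ and thus equals $x^*$. Applied to $G = F_u$ (for which some power is a contraction, since every singleton $\{u\}$ is bounded), this shows that $\Phi$ is well defined and that, for every bounded $B \subset P$ with associated power $N$, one has $\Phi(u) = F_u^N(\Phi(u))$ for all $u \in B$; i.e.\ $\Phi$ is simultaneously the fixed point map of $F$ and of the iterate $G := F^N$, $G(x,u) := F_u^N(x)$.

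The key point is that the three hypotheses of Theorem \ref{thm:main_1} transfer to $G$ on the bounded set $B$. Condition (a) of the Corollary states that $F$ is globally Lipschitz in $x$ uniformly on bounded sets, so in particular $F$ is locally Lipschitz in $x$ locally uniformly in $u$; by Lemma \ref{lem0:continuity}(a) (using the continuity hypothesis (c)) $F$ is then jointly continuous, hence so is the finite composition $(x,u) \mapsto F_u^N(x)$, which yields continuity of $G_x$. That $G_x$ is compact follows from Corollary \ref{cor:precompact}: writing $T_k := \{F_u^k(x) : u \in B\}$ one checks inductively, via Lemma \ref{lem1:precompact}, that $T_k \subset W_k(x)$, the sets built by $W_0 = \{x\}$, $W_{k+1} = F(W_k \times B)$, so $\{G_x(u) : u \in B\} = T_N \subset W_N(x)$ is relatively compact. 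Finally, the hidden-contraction hypothesis (b) is exactly the statement that $G = F^N$ is a contraction in $x$ uniformly on $B$, with some rate $C < 1$.

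With these ingredients I would then rerun the proof of Theorem \ref{thm:main_1}, but with $G = F^N$ in place of $F$ and with $B$ fixed throughout. For compactness I fix an arbitrary bounded $B$ and the associated $N, C$, use $\Phi(u) = \lim_{k} (F_u^N)^k(x_0)$ together with the geometric tail estimate $d(\Phi(u), F_u^{Nk}(x_0)) \le \frac{C^k}{1-C} d(F_u^N(x_0), x_0)$, note that $d(F_u^N(x_0), x_0)$ is bounded on $B$ (since $T_N$ is relatively compact, hence bounded), and combine the resulting uniform tail bound with a finite $\varepsilon/2$-net of the relatively compact set $\{F_u^{Nk}(x_0) : u \in B\} \subset W_{Nk}(x_0)$ to produce an $\varepsilon$-net of $\Phi(B)$. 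For continuity at a point $u$ I take $B = B_1(u)$, and from $\Phi(v) = F_v^N(\Phi(v))$ and the contraction rate on $B$ I obtain $d(\Phi(v), \Phi(u)) \le \frac{1}{1-C} d\big(F_v^N(\Phi(u)), F_u^N(\Phi(u))\big)$, which tends to $0$ as $v \to u$ by continuity of $G_{\Phi(u)}$.

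The remaining obstacle, and the only genuinely new computation, is the local Lipschitz assertion under hypothesis (e). Here I must show that the iterate $G_x = F^N_\cdot(x)$ inherits local Lipschitz dependence on $u$ locally uniformly in $x$ from $F$. Setting $a_k = F_v^k(x)$ and $b_k = F_w^k(x)$, the triangle inequality together with (a) and (e) gives the recursion $d(a_{k+1}, b_{k+1}) \le L_x\, d(a_k, b_k) + L_u\, d(v,w)$, whence $d(F_v^N(x), F_w^N(x)) \le \big(\sum_{j=0}^{N-1} L_x^{\,j}\big) L_u\, d(v,w)$; the care needed is to keep all iterates $a_k, b_k$ inside a neighborhood on which the local constants $L_x, L_u$ are valid, which is arranged by shrinking the neighborhoods of $u$ in $P$ and of $\Phi(u)$ in $X$ and invoking the continuity of $\Phi$ already established. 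Feeding this estimate into the identity $d(\Phi(v),\Phi(w)) \le \frac{1}{1-C} d\big(F_v^N(\Phi(v)), F_w^N(\Phi(v))\big)$ then yields the local Lipschitz bound for $\Phi$. I expect the bookkeeping for this localization --- together with the fact that the power $N$ and rate $C$ depend on $B$, which is precisely why the argument must be carried out one bounded set (resp.\ one point) at a time rather than by a single global application of Theorem \ref{thm:main_1} --- to be the main technical hurdle.
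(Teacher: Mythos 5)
Your argument is correct, but it follows a genuinely different route from the paper's. The paper proves Corollary \ref{cor:main_1} by reduction to Corollary \ref{cor:main_2}: Lemma \ref{lem:equivalent-metric} shows that ``Lipschitz $+$ hidden contraction'' is equivalent to ``contraction with respect to a strongly equivalent metric'' (via $d'(x,y) := \sup_{n}\, d(F^n(x),F^n(y))/C^{n/N}$), so for each bounded $B \subset P$ one obtains a metric $d_B$ in which $F_u$ is a uniform contraction, and then Theorem \ref{thm:main_1} applies verbatim on $X \times B$ --- continuity, local Lipschitz continuity and relative compactness being invariant under strongly equivalent metrics, no new estimates are needed. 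You instead keep the original metric and replace the map by its iterate $G = F^N_{\cdot}$, re-running the proof of Theorem \ref{thm:main_1} for $G$; your observation that $\Phi$ is simultaneously the fixed point map of $F$ and of $F^N$ is the standard hidden-contraction argument and is sound, and your use of Corollary \ref{cor:precompact} to get relative compactness of $\{F_u^{Nk}(x_0) : u \in B\} \subset W_{Nk}(x_0)$ is exactly what the hypotheses (a) and (d) provide. The price of your route is that every hypothesis of Theorem \ref{thm:main_1} must be transferred to the iterate by hand; the only place where this costs real work is hypothesis (e), where your recursion $d(a_{k+1},b_{k+1}) \leq L_x\, d(a_k,b_k) + L_u\, d(v,w)$ and the attendant localization of the finitely many iterates near $\Phi(u)$ (justified by the joint continuity of $F$ from Lemma \ref{lem0:continuity} and the already-established continuity of $\Phi$) do correctly yield the local Lipschitz bound. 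In short: the paper trades the bookkeeping for a one-time metric construction, while you trade the metric construction for bookkeeping on the $N$-fold composition; both are valid, and your version makes explicit the geometric-series constant $\frac{L_u}{1-C}\sum_{j=0}^{N-1}L_x^{\,j}$ that the paper's renorming hides inside the equivalence constants $M, M'$.
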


\begin{corollary}
  \label{cor:main_2}
  Let $X$ and $P$ be complete metric spaces and let $F:X \times P \to X$ satisfy the following conditions:\smallskip
  \begin{enumerate}
  \item[(a)]
    For every bounded set $B \subset P$ there exists a strongly equivalent metric $d_B$ on $X$ and a constant
    $0 \leq C < 1$ such that 
    \begin{equation*}
      d_B(F_u(x),F_u(y)\big) \leq  C d_B(x,y)
    \end{equation*}
    for all $x,y \in X$ and all $u \in B$.\smallskip
  \item[(b)]
    $F_x:P \to X$ is continuous for all $x \in X$.\smallskip
  \item[(c)]
    $F_x:P \to X$ is compact for all $x \in X$.\smallskip
  \end{enumerate}
  Then the well-defined fixed point map $\Phi: P \to X$  which assigns to each $u \in P$ the unique fixed
  point of $F_u(\cdot)$, i.e.~$\Phi(u) = F\big(\Phi(u),u\big)$, is continuous and compact. If additionally\smallskip
  \begin{enumerate}
  \item[(d)]
    $F:X \times P \to X$ is locally Lipschitz in $u$ locally uniformly in $x$\smallskip
  \end{enumerate}
  then $\Phi: P \to X$ is also locally Lipschitz in $u$.
\end{corollary}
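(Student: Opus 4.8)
The plan is to reduce Corollary \ref{cor:main_2} to Theorem \ref{thm:main_1} by the observation that all of its conclusions --- well-definedness, continuity (resp.\ local Lipschitzness), and compactness --- may be checked one bounded set (resp.\ one point) at a time, and that on each such set the hypotheses of Theorem \ref{thm:main_1} are met once the ambient metric $d$ on $X$ is replaced by the strongly equivalent metric $d_B$ furnished by assumption (a). The key point, recorded in Remark \ref{rem:equivalent_metrics} and in the definition \eqref{eq:equivalent_metrics}, is that strong equivalence preserves precisely the structures involved: $(X,d_B)$ is again complete; $d$ and $d_B$ induce the same topology, so continuity of $F_x$ is unaffected; a subset is $d_B$-totally bounded if and only if it is $d$-totally bounded, so relative compactness and hence assumption (c) are unaffected; and any Lipschitz estimate in $d_B$ transfers to one in $d$ at the cost of the constants $M,M'$.

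First I would settle well-definedness. Given $u \in P$, choose any bounded set $B$ containing $u$, for instance $B = B_1(u)$. By (a), $F_u$ is a $d_B$-contraction on the complete space $(X,d_B)$, so Banach's fixed point theorem yields a unique $d_B$-fixed point, which is a fixed point of $F_u$ in the metric-free sense $\Phi(u) = F_u(\Phi(u))$. Uniqueness is independent of the chosen $B$, since any two fixed points $x,y$ satisfy $d_B(x,y) = d_B(F_u(x),F_u(y)) \leq C\,d_B(x,y)$ and hence $x=y$. Thus $\Phi : P \to X$ is well defined. Continuity and, under (d), local Lipschitzness are purely local in $u$, so I would fix $u_0 \in P$, set $B = B_1(u_0)$, and run verbatim the first part of the proof of Theorem \ref{thm:main_1} with $d$ replaced by $d_B$. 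This gives $d_B(\Phi(v),\Phi(u_0)) \leq \frac{1}{1-C}\,d_B\big(F(\Phi(u_0),v),F(\Phi(u_0),u_0)\big)$ for $v \in B$, so continuity of $F_{\Phi(u_0)}$ (which holds in $d_B$ because it holds in $d$) yields continuity of $\Phi$ at $u_0$ in $d_B$, hence in $d$. Under (d) the estimate $d(F(x,v),F(x,w)) \leq L\,d(v,w)$ transfers to $d_B$ via \eqref{eq:equivalent_metrics}, the local-Lipschitz argument of Theorem \ref{thm:main_1} then shows $\Phi$ is locally Lipschitz in $d_B$, and since $d \leq M'd_B$ it is locally Lipschitz in $d$ as well.

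The substance of the proof, and the step I expect to require the most care, is compactness, because a single metric no longer works and the total-boundedness argument of Theorem \ref{thm:main_1} must be reproduced within one bounded set at a time. Fixing a bounded $B \subset P$ and $\varepsilon > 0$, I would pass to $d_B$ and note that with respect to $d_B$ the map $F$ is Lipschitz in $x$ (with constant $C < 1$) uniformly over $B$ and still satisfies (c), so Corollary \ref{cor:precompact} applies in $(X,d_B)$ and shows that each iterate set $W_n(x_0)$ is $d_B$-relatively compact. Combining the geometric tail estimate $d_B(\Phi(u),F_u^N(x_0)) \leq \frac{C^N}{1-C}\,d_B(F_u(x_0),x_0)$ --- where $\sup_{u \in B} d_B(F_u(x_0),x_0) < \infty$ because $F(x_0,B)$ is $d_B$-relatively compact and hence $d_B$-bounded --- with a finite $d_B$-$\frac{\varepsilon}{2}$-net of the relatively compact set $\{F_u^N(x_0) : u \in B\} \subset W_N(x_0)$ produces a finite $d_B$-$\varepsilon$-net of $\Phi(B)$. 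Therefore $\Phi(B)$ is $d_B$-totally bounded; by strong equivalence it is $d$-totally bounded, and since $(X,d)$ is complete it is relatively compact.

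The main obstacle is consequently one of bookkeeping rather than of new ideas: because the metric $d_B$ varies with $B$, one cannot invoke Theorem \ref{thm:main_1} a single time, and I must verify that every use of completeness, continuity, relative compactness, and boundedness remains legitimate under the $B$-dependent metric. Remark \ref{rem:equivalent_metrics} together with \eqref{eq:equivalent_metrics} is exactly what licenses each of these transfers, so once the localization is set up the remaining computations are routine.
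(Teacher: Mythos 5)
Your proposal is correct and follows essentially the same route as the paper: localize to a bounded set $B\subset P$, pass to the strongly equivalent metric $d_B$, and observe that completeness, continuity, relative compactness, and Lipschitz estimates all transfer under strong equivalence so that the machinery of Theorem \ref{thm:main_1} applies. The only cosmetic difference is that the paper invokes Theorem \ref{thm:main_1} as a black box applied to the restriction $F\big|_{X\times B}$ (taking $B$ closed so that it is itself a complete metric space), whereas you re-run its internal arguments directly; the substance is identical.
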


\noindent
The following lemma serves to reduce the proof of Corollary \ref{cor:main_1} to that of Corollary \ref{cor:main_2}.
An almost identical construction can be found in \cite {Brown18}, Chap.~3. Nevertheless, for the sake
  of self-containedness, we present the short proof.

\begin{lemma}
  \label{lem:equivalent-metric}
  Let $X$ be a metric space and let $F:X\to X$ be given. Then the following two assertions are equivalent:\smallskip
  \begin{enumerate}
  \item[(a)]
    $F$ is Lipschitz and an eventual contraction.\smallskip
  \item[(b)]
    There exists a strongly equivalent metric $d'$ on $X$ such that $F$ is a contraction with respect to $d'$.
  \end{enumerate}
\end{lemma}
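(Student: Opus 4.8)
The plan is to establish the two implications separately, since (b) $\Rightarrow$ (a) is routine while (a) $\Rightarrow$ (b) is the substantive direction and requires building a suitable metric by hand.

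For (b) $\Rightarrow$ (a), suppose $d'$ is strongly equivalent to $d$ via constants $M, M' > 0$ (so that $d \le M' d'$ and $d' \le M d$) and that $F$ is a contraction with rate $C' < 1$ with respect to $d'$. First I would transfer the single-step estimate: $d(F(x),F(y)) \le M' d'(F(x),F(y)) \le M' C' d'(x,y) \le M M' C' d(x,y)$, so $F$ is Lipschitz with respect to $d$. Iterating the contraction gives $d'(F^n(x),F^n(y)) \le (C')^n d'(x,y)$ for every $n$, hence $d(F^n(x),F^n(y)) \le M M' (C')^n d(x,y)$; since $C' < 1$, choosing $N \in \NN$ with $M M' (C')^N < 1$ shows that $F^N$ is a contraction, i.e.~$F$ is a hidden contraction.

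For the direction (a) $\Rightarrow$ (b), let $L$ be a Lipschitz constant for $F$ and let $N \in \NN$, $0 \le C < 1$ be such that $F^N$ is a contraction with rate $C$ (I may assume $C > 0$, as an eventual contraction rate can always be enlarged below $1$). The idea is to spread the eventual contraction across the orbit by setting $d'(x,y) := \sum_{k=0}^{N-1} \lambda^k\, d\big(F^k(x),F^k(y)\big)$ with weight $\lambda := C^{-1/N} > 1$ chosen so that $\lambda^N C = 1$. That $d'$ is a metric follows termwise (symmetry and triangle inequality are inherited summand by summand, and positivity comes from the $k=0$ term). A telescoping computation gives $d'(F(x),F(y)) = \lambda^{-1}\big[d'(x,y) - d(x,y) + \lambda^N d(F^N(x),F^N(y))\big]$, and inserting $d(F^N(x),F^N(y)) \le C\, d(x,y)$ together with $\lambda^N C = 1$ cancels the two $d(x,y)$ terms and yields $d'(F(x),F(y)) \le \lambda^{-1} d'(x,y) = C^{1/N} d'(x,y)$, a genuine contraction. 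Finally I would verify strong equivalence: the $k=0$ summand gives $d(x,y) \le d'(x,y)$, while the Lipschitz bound $d(F^k(x),F^k(y)) \le L^k d(x,y)$ gives $d'(x,y) \le \big(\sum_{k=0}^{N-1}(\lambda L)^k\big) d(x,y)$, so both comparison constants are finite and positive.

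The main obstacle is the simultaneous balancing in this construction: the resulting contraction rate of $F$ is essentially $\lambda^{-1}$, which pushes $\lambda$ to be large, whereas the telescoped error term carries the factor $\lambda^N C$, which forces $\lambda$ to be small. The choice $\lambda = C^{-1/N}$ is precisely the value reconciling these two demands ($\lambda > 1$ and $\lambda^N C = 1$ hold at once), and it is available exactly because $C < 1$. The only degenerate case to dispose of is $C = 0$ (where $F^N$ is constant), which I would handle by first replacing $C$ with any positive value below $1$ before running the argument.
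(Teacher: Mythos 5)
Your proof is correct. The implication (b) $\Rightarrow$ (a) is verbatim the paper's argument. For (a) $\Rightarrow$ (b) you and the paper share the same underlying renorming idea---weight the $k$-th iterate by $C^{-k/N}$---but the concrete constructions differ: the paper takes the weighted \emph{supremum} $d'(x,y) = \sup_{n \in \NN_0} d\big(F^n(x),F^n(y)\big)/C^{n/N}$ over all iterates, whereas you take the finite weighted \emph{sum} $\sum_{k=0}^{N-1}\lambda^k d\big(F^k(x),F^k(y)\big)$ with $\lambda = C^{-1/N}$. The trade-offs are as follows. With the supremum, the contraction estimate $d'(F(x),F(y)) \le C^{1/N} d'(x,y)$ falls out of a pure index shift without touching the hypothesis $d(F^N(x),F^N(y)) \le C\,d(x,y)$; that hypothesis is instead needed to show the supremum is finite and in fact reduces to a maximum over $0 \le n \le N-1$, after which the Lipschitz bound gives the upper comparison constant. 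With your sum, finiteness is automatic and the hidden-contraction bound enters exactly once, in the telescoping identity, to cancel the boundary terms; the comparison constants come out as $1$ and $\sum_{k=0}^{N-1}(\lambda L)^k$ rather than $1$ and $\max_{0\le n\le N-1} L^n/C^{n/N}$. A small point in your favour: you explicitly dispose of the degenerate case $C=0$, which the paper's formula $C^{n/N}$ in the denominator silently presupposes away. Both routes are complete and correct.
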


\begin{proof}
  (a) $\Longrightarrow$ (b): Assume that $F$ is an eventual contraction, i.e.~there exists $N \in \NN$ and $0 \leq C < 1$
  such that
  \begin{equation}
    \label{eq:hidden-contraction}
    d\big(F^N(x),F^N(y)\big) \leq C d(x,y)
  \end{equation}
  for all $x,y \in X$. This allows us to define $d'$ as follows
  \begin{equation*}
    d'(x,y) := \sup_{n \in \NN_0}\frac{d\big(F^n(x),F^n(y)\big)}{C^{\frac{n}{N}}} < \infty\,.
  \end{equation*}
  Obviously, $d'$ yields a metric which satisfies the estimate $d(x,y) \leq d'(x,y)$ and 
  \begin{equation*}
    \begin{split}
       d'\big(F(x),F(y)\big) &= \sup_{n \in \NN_0}\frac{d\big(F^{n+1}(x),F^{n+1}(y)\big)}{C^{\frac{n}{N}}}\\
      &  = C^{\frac{1}{N}}\sup_{n \in \NN_0}\frac{d\big(F^{n+1}(x),F^{n+1}(y)\big)}{C^{\frac{n+1}{N}}}
      \leq C^{\frac{1}{N}}d'(x,y)\,,
    \end{split}
  \end{equation*}
  i.e.~$F$ is a contraction with respect to $d'$. In order to see the strong equivalence of $d$ and $d'$ one
  exploits \eqref{eq:hidden-contraction} to obtain
  \begin{equation*}
      d'(x,y) = \sup_{n \in \NN_0}\frac{d\big(F^n(x),F^n(y)\big)}{C^{\frac{n}{N}}}
      = \max_{0 \leq n \leq N-1}\frac{d\big(F^n(x),F^n(y)\big)}{C^{\frac{n}{N}}}
  \end{equation*}
  and so Lipschitz continuity of $F$ implies $d'(x,y) \leq M d(x,y)$ with
  \begin{equation*}
    M := \max_{0 \leq n \leq N-1}\frac{L^n}{C^{\frac{n}{N}}}\,,
  \end{equation*}
  where $L \geq 0$ is any Lipschitz constant for $F$.
    
  \medskip
  \noindent
  (b) $\Longleftarrow$ (a): Assume that there exists an strongly equivalent metric $d'$ such that $F$ is a
  contraction with contraction rate $0 \leq C' < 1$ with respect of $d'$. Hence
  \begin{equation*}
      d\big(F(x),F(y)\big) \leq M' d'\big(F(x),F(y)\big) \leq M' C' d'\big(x,y) \leq M M' C' d\big(x,y) \,,
  \end{equation*}
  where $M$ and $M'$ are constants which satisfy \eqref{eq:equivalent_metrics}. This shows that $F$ is
  Lipschitz continuous with $L := MM'C'$. Moreover, choosing $N \in \NN$ such that the following
  estimate holds
  \begin{equation*}
    C := M M'(C')^N < 1
  \end{equation*}
  one obtains 
  \begin{equation*}
    \begin{split}
      d\big(F^N(x),F^N(y)\big) &\leq M' d'\big(F^N(x),F^N(y)\big) \leq M' (C')^N d'\big(x,y) \\
      & \leq M M' (C')^N d\big(x,y) = C d\big(x,y)\,,
    \end{split}
  \end{equation*}
  for all $x,y\in X$, i.e.~$F$ is an eventual contraction.
\end{proof}

\begin{proof}[Proof of Corollary \ref{cor:main_2}]
  First, we briefly demonstrate that the fixed point map $\Phi: P \to X$ is well-defined. To this end, choose
  an arbitrary $u \in P$. Since $\{u\}$ is obviously a bounded subset of $P$ there, by assumption, exists a strongly
  equivalent metric $d_u$ such that $F_u:X \to X$ is a contraction. Thus the contraction principle implies that
  $F_u$ has a unique fixed point $\Phi(u) \in X$. Note that the strong equivalence of $d$ and $d_u$ guarantees
  that $X$ is complete with respect $d_u$.
  
  \medskip
  \noindent
  Next, let us prove (local Lipschitz) continuity of $u \mapsto \Phi(u)$. Therefore, let $u \in P$ and choose a
  bounded and closed\footnote{Later $B$ will play the role of $P$ (cf.~Theorem \ref{thm:main_1}) and therefore we
    have to guarantee that $B$ constitutes a complete metric space.} neighborhood of $u \in P$, e.g.~$B := K_1(u)$.
  Since (local Lipschitz) continuity is preserved when passing to a strongly equivalent metric it suffices to show
  the desired result for the metric $d_{B}$. Now, the restriction $F\big|_{X \times B}$ satisfies readily the assumptions
  of Theorem \ref{thm:main_1} (with respect to the metric $d_{B}$). Thus we can conclude that $\Phi$ is continuous
  at $u \in P$, and---under assumption (d)---that $\Phi$ is even locally Lipschitz.

  \medskip
  \noindent
  Finally, let $B \subset P$ be an arbitrary bounded subset of $P$. We have to show that $\Phi(B)$ is relatively compact
  in $X$. Due to assumption (b) we can choose a strongly equivalent metric $d_{B}$ on $X$. Again it suffices to prove
  that $\Phi(B)$ is relatively compact with respect to $d_{B}$. Hence, by the same argument as above, we may consider
  the restriction $F\big|_{X \times B}$ instead of $F$, and infer from Theorem \ref{thm:main_1} that $\Phi(B)$
  is relatively compact.
\end{proof}

\begin{remark}
  \label{rem:continuiuty}
  One might conjecture that assumption (a) in Corollary \ref{cor:main_1} can be dropped as assumption (b) already
  guarantees a unique fixed point of $F_u(\cdot)$. But that does not work in general, because (b) does not ensure
  continuity of the fixed point map as the following example shows: Let $X=P=\RR$ and define $F: X \times P \to X$ by
  \begin{equation*}
    F(x,u) :=
    \begin{cases}
      u & \text{for $x \in \QQ$,}\\
      -u & \text{for $x \in \RR \setminus \QQ$.}
    \end{cases}
  \end{equation*}
  Then $F_x = F(x,\cdot)$ is continuous and $F$ is an eventual contraction in $x$ uniformly in $u$ as one has
  \begin{equation*}
    F_u^2(x) = F\big(F(x,u),u\big) =
    \begin{cases}
      u & \text{for $u \in \QQ$}\\
      -u & \text{for $u \in \RR \setminus \QQ$.}
    \end{cases}
  \end{equation*}
  Hence
  \begin{equation*}
    \Phi(u) =
    \begin{cases}
      u & \text{for $u \in \QQ$,}\\
      -u & \text{for $u \in \RR \setminus \QQ$,}
    \end{cases}
  \end{equation*}
  is obviously not continuous.
\end{remark}

\noindent
The final result of this section handles a scenario in which one is interested in the compactness of some composition
$G \circ \Phi: P \to Z$ instead of the compactness of the fixed point map $\Phi:P \to X$ itself. The Ball-Marsden-Slemrod
conjecture for $p=1$ falls in this regime and will be tackled with the help of the following corollary in Subsection
\ref{subsec:application_p=1}.

\begin{corollary}
  \label{cor:main_3a}
  Let $X$, $Z$ and $P$ be complete metric spaces and let $F:X \times P \to X$ and $G:X \to Z$ satisfy the
  following conditions:\smallskip
  \begin{enumerate}
  \item[(a)]
    $F: X \times P \to X$ is a contraction in $x$ uniformly on bounded sets of $P$.\smallskip
  \item[(b)]
    $F_x:P \to X$ is continuous for all $x \in X$.\smallskip
  \item[(c)]
    $G:X \to Z$ is Lipschitz continuous.\smallskip
  \item[(d)]
    For $S \subset X$ and $B \subset P$ the following implication holds:
    \begin{equation*}
      \hspace{-12mm}\text{$G(S)$ relatively compact and $B$ bounded}
      \; \Longrightarrow \;
      \text{$G\big(F(S \times B)\big)$ relatively compact}
  \end{equation*}
  \end{enumerate}
  Then the well-defined fixed point map $\Phi: P \to X$ which assigns to each $u \in P$ the unique fixed
  point of $F_u(\cdot)$, i.e.~$\Phi(u) = F\big(\Phi(u),u\big)$, and the composition $G \circ \Phi$ are
  continuous. Moreover,  $G \circ \Phi$ is compact.
\end{corollary}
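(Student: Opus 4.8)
The plan is to follow the architecture of the proof of Theorem~\ref{thm:main_1}, carrying out every compactness argument \emph{after} applying $G$, i.e.\ in $Z$ rather than in $X$. First, since hypotheses (a) and (b) coincide with conditions (a) and (b) of Theorem~\ref{thm:main_1}, the fixed point map $\Phi$ is well-defined and continuous by the very same argument (the contraction principle together with the estimate $d(\Phi(v),\Phi(u))\le\frac{1}{1-C}d(F(\Phi(u),v),F(\Phi(u),u))$ on a bounded neighbourhood of $u$). In particular (a) makes $F$ Lipschitz in $x$ uniformly on bounded sets, hence locally Lipschitz in $x$ locally uniformly in $u$, so by Lemma~\ref{lem0:continuity}(a) the map $F$ is jointly continuous; combined with the Lipschitz continuity of $G$ from (c) this shows that $G\circ\Phi$ is continuous. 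It therefore remains to prove that $G\circ\Phi$ is compact.

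Fix a bounded set $B\subseteq P$, an arbitrary base point $x_0\in X$ and $\varepsilon>0$; I must produce a finite $\varepsilon$-net of $(G\circ\Phi)(B)$, which by completeness of $Z$ yields relative compactness. Mimicking the construction in Corollary~\ref{cor:precompact}, set $W_0:=\{x_0\}$ and $W_{n+1}:=F(W_n\times B)$. I claim that $G(W_n)$ is relatively compact for every $n$: indeed $G(W_0)$ is a single point, and if $G(W_n)$ is relatively compact then condition (d), applied with $S=W_n$, shows that $G(W_{n+1})=G(F(W_n\times B))$ is relatively compact. Since $\{F_u^{\,n}(x_0):u\in B\}\subseteq W_n$, each map $u\mapsto G(F_u^{\,n}(x_0))$ is compact, with image contained in the relatively compact set $G(W_n)$. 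By the contraction principle $\Phi(u)=\lim_{n}F_u^{\,n}(x_0)$, and continuity of $G$ gives $G(\Phi(u))=\lim_n G(F_u^{\,n}(x_0))$. Thus, exactly as in the final step of the proof of Theorem~\ref{thm:main_1}, it suffices to establish the uniform tail estimate
\begin{equation*}
  \sup_{u\in B} d\big(G(\Phi(u)),G(F_u^{\,N}(x_0))\big)<\tfrac{\varepsilon}{2}
\end{equation*}
for some $N\in\NN$: an $\tfrac{\varepsilon}{2}$-net of the relatively compact set $G(W_N)\supseteq\{G(F_u^{\,N}(x_0)):u\in B\}$ then provides, via the triangle inequality, an $\varepsilon$-net of $(G\circ\Phi)(B)$.

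The main obstacle is precisely this uniform tail estimate, and it is the one place where (d) must be used for more than the relative compactness of the individual $G(W_n)$. The tempting shortcut
\begin{equation*}
  d\big(G(\Phi(u)),G(F_u^{\,N}(x_0))\big)\le L\,d\big(\Phi(u),F_u^{\,N}(x_0)\big)\le \frac{L\,C^{N}}{1-C}\,d\big(F_u(x_0),x_0\big),
\end{equation*}
with $L$ a Lipschitz constant of $G$ and $C<1$ the uniform contraction rate on $B$, would close the argument if $\sup_{u\in B}d(F_u(x_0),x_0)<\infty$, i.e.\ if $F(x_0,B)$ were bounded in $X$. In Theorem~\ref{thm:main_1} this boundedness came for free from the compactness of $F_{x_0}$, but here (d) controls only the $G$-image of $F(x_0,B)$ in $Z$, and $F(x_0,B)$ may genuinely be unbounded in $X$ (think of $G$ collapsing exactly the direction in which the iterates run off to infinity). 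Consequently the estimate has to be carried out in $Z$ from the start: I would bound the $Z$-increments $d(G(F_u^{\,n+1}(x_0)),G(F_u^{\,n}(x_0)))$ and show they are summable uniformly in $u\in B$ --- equivalently, that the sets $G(W_n)$ form a Cauchy sequence in the Hausdorff metric on $Z$, so that $\overline{\bigcup_n G(W_n)}$ is compact. Since $\Phi(u)\in\overline{\bigcup_n W_n}$ and $G$ is continuous, this would at once give $(G\circ\Phi)(B)\subseteq\overline{\bigcup_n G(W_n)}$ and hence relative compactness. Reconciling the $X$-contractivity of $F$ with the possibly non-isometric, collapsing behaviour of $G$ --- so that the generic, and here too lossy, Lipschitz bound above is replaced by a genuine $Z$-side contraction supplied by (d) --- is where I expect the real work to lie.
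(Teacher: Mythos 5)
Your proposal reproduces the paper's argument essentially verbatim up to the decisive step: continuity of $\Phi$ and of $G\circ\Phi$ from (a)--(c); the sets $W_{n+1}=F(W_n\times B)$ with $G(W_n)$ relatively compact by induction on hypothesis (d); a finite $\frac{\varepsilon}{2}$-net of $G(W_N)\supseteq\{G(F_u^N(x_0)):u\in B\}$; and the triangle inequality. The paper then closes the argument with precisely the estimate you call a ``tempting shortcut'', namely
\begin{equation*}
  d\big(G(\Phi(u)),G(F_u^N(x_0))\big)\leq L\,d\big(\Phi(u),F_u^N(x_0)\big)\leq \frac{L\,K\,C^N}{1-C}\,,
\end{equation*}
silently re-using the constant $K=\sup_{u\in B}d\big(F_u(x_0),x_0\big)$ from the proof of Theorem~\ref{thm:main_1}. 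You are right that the finiteness of this constant is not supplied by the hypotheses of the corollary as stated: in Theorem~\ref{thm:main_1} it came from the compactness of $F_{x_0}$ (condition (c) there), whereas here (b) gives only continuity of $F_{x_0}$ and (d) controls only the $G$-image of $F(x_0,B)$, not its diameter in $X$. So you have correctly isolated the one point where the corollary's written proof is not self-contained. (In the intended application this is harmless: Lemma~\ref{lem1:app-continuity} shows $F_x$ is globally Lipschitz in $u$, so $F(x_0,B)$ is bounded; abstractly one should add the hypothesis that $F_{x_0}$ maps bounded sets of $P$ to bounded sets of $X$ for some, equivalently every, $x_0$.)

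The genuine gap, however, is that your proposal does not close the hole it identifies. The suggested repair---showing the $Z$-increments are uniformly summable, equivalently that the sets $G(W_n)$ are Cauchy in the Hausdorff metric---is left entirely as a program, and the only increment bound the hypotheses provide, $d\big(G(F_u^{n+1}(x_0)),G(F_u^n(x_0))\big)\leq L\,C^n\,d\big(F_u(x_0),x_0\big)$, passes through exactly the $X$-distance whose uniform boundedness is in question; hypothesis (d) is a compactness statement, not a contraction on the $Z$-side, so it cannot by itself produce the ``genuine $Z$-side contraction'' you hope for. As written, then, your text is an accurate reconstruction of the paper's proof minus its final inequality, together with a correct diagnosis of why that inequality needs an extra ingredient---but the ingredient is not supplied and the proof is incomplete. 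The cleanest fix is to assume (or, in applications, verify) boundedness of $F_{x_0}(B)$, after which your first displayed estimate finishes the argument exactly as in the paper.
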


\begin{proof}
  Obviously it suffices to focus on the compactness of $\Phi$ as continuity follows directly from Theorem \ref{thm:main_1}
  and thus continuity of $G\circ\Phi$ is evident.

  \medskip
  \noindent
  To this end we can proceed as in the proof of Theorem \ref{thm:main_1}. Let $B \subset P$ be bounded
  and $\varepsilon > 0$. We can choose $N \in \NN$ such that
  \begin{equation*}
    d\big(\Phi(u),F_u^n(x_0)\big) \leq \frac{K C^n}{1-C} < \frac{\varepsilon}{2L}
  \end{equation*}
  for all $u \in B$ and all $n \geq N$, where $L > 0$ denotes a Lipschitz constant of $G$. Moreover, assumption (d)
  implies that all elements of the recursively defined sequence
  \begin{equation}
    \label{eq:recursion-2}
    W_0(x_0) := \{x_0\} \quad \text{and} \quad W_{n+1}(x_0) := F(W_n \times B)\,.
  \end{equation}
  become relatively compact when applying $G$. Therefore, the set $\{G\big(F^N_u(x_0)\big)  :  u\in B\} \subset G\big(W_N(x_0)\big)$
  is also relatively compact and thus admits a finite $\frac{\varepsilon}{2}$-net $z_1, \dots, z_M$. Hence, for all $u \in B$ we can find
  $z_i$ such that $d\big(G\big(F^N_u(x_0)\big),z_i\big) < \frac{\varepsilon}{2}$ and consequently one has
  \begin{equation*}
    \begin{split}
      d\big(G\big(\Phi(u)\big),z_i\big) & \leq d\big(G\big(\Phi(u)\big),G\big(F_u^N(x_0)\big)\big) + d\big(G\big(F_u^N(x_0)\big),z_i\big)\\
      & < L d\big(\Phi(u),F_u^N(x_0)\big) + \frac{\varepsilon}{2} < \varepsilon\,,
    \end{split}
  \end{equation*}
  i.e.~$G\big(\Phi(B)\big)$ is relatively compact as $z_1,\ldots,z_M$ constitutes a finite $\varepsilon$-net.
\end{proof}

\noindent
Obviously, one can combine Corollary \ref{cor:main_1} or \ref{cor:main_2} with Corollary \ref{cor:main_3a}; yet it seems to
be superfluous to list these straightforward modifications here.

\section{Applications and the Ball-Marsden-Slemrod Conjecture}
\label{sec:application}

Next, we want to apply the previous findings (i) to provide a simplified approach to the non-controllability result
by Ball, Marsden, and Slemrod for $p > 1$ and (ii) to derive a proof of the Ball-Marsden-Slemrod conjecture for $p=1$,
cf.~Subsection \ref{subsec:application_p>1} and Subsection \ref{subsec:application_p=1}, respectively. In doing
so, we generalize earlier results in \cite{BMS1982} and \cite{BCC2019b,BCC2019a,BCC2020} from bilinear systems
to semi-linear ones by allowing non-linear vector fields $f_i$ which are Lipschitz on bounded sets and linearly
bounded. A first result in the same direction can be found in \cite{ChTh2019}.

\medskip
\noindent

Before going into the details, a few clues are advisable to ``maneuver'' the reader through the following
  two subsections:
  While the approach of Subsection \ref{subsec:application_p>1} is completely based on Corollary \ref{cor:main_2}, the situation
  in Subsection \ref{subsec:application_p=1} is bit more subtle. For proving the existence of a unique fixed point map we
  will apply Corollary \ref{cor:main_1} because the $\omega$-norm trick of Section \ref{subsec:application_p>1} does not
  work for $p=1$. Therefore, we have to estimate the Lipschitz constants of higher powers of the integral operator
  \eqref{eq:integral-operator} (see Lemma \ref{lem5:app-contraction} below) in order to see that it is an eventual
  contraction\footnote{In
      principle, by Lemma \ref{lem:equivalent-metric} both approaches are equivalent. Yet, the equivalent norm constructed
      in Lemma \ref{lem:equivalent-metric} cannot be expressed explictly as an $\omega$-norm. In that sense the second approach
      is stronger than the first one (see also Remark \ref{rem:app-contraction}).}.
  Once we have solved this problem we are faced with the actual
  challenge: the collapse of Lemma \ref{lem1:app-compactness} which is revealed in the non-compactness of the fixed point
  map $\Phi: u \mapsto x(\cdot, \xi_0,u)$ for $p=1$, cf.~Appendix \ref{sec:counter-example}. Here, Corollary \ref{cor:main_3a}
  comes into play and allows to prove relative compactness of $\cR^1(\xi_0)$ directly, i.e.~without Lemma
  \ref{lem2:app-compactness}, see also Remark \ref{rem:p=1}.
 
\medskip
\noindent
Throughout Section \ref{sec:application}, we assume that the infinitesimal generator $A: D(A) \to X$ is of
class $(M,\mu)$ with $M \geq 1$, $\mu \geq 0$, i.e.~for all $t \geq 0$ one has the estimate
\begin{equation*}
  \|\re^{tA}\| \leq M\re^{\mu t}\,.
\end{equation*}
Note that every infinitesimal generator of a $C^0$-semigroup is of some class $(M',\mu')$ with $M' \geq 1$, $\mu' \geq 0$
\cite[Chap.~1, Thm.~2.2]{Pazy1983}. Hence requiring $A$ to be of class $(M,\mu)$ serves only to fix the constants $M \geq 1$
and $\mu \geq 0$ but does not pose an actual restriction.


\medskip
\noindent
Moreover, we say that a vector field $f:\RR_0^+ \times X \to X$ satisfies Assumptions (A1) and (A2), respectively,
if the following conditions are fulfilled.

\medskip
\noindent
\textbf{Assumption A1:}
$f:\RR_0^+ \times X \to X$ is Lipschitz in $\xi$ on bounded sets with $L_{\rm loc}^\infty$-Lipschitz rate,
i.e.~for every bounded set $C \subset X$ there exists  $L \in L_{\rm loc}^\infty\big(\RR_0^+,\RR_0^+\big)$ such that
\begin{equation*}
  \big\|f(t,\xi) - f(t,\eta)\big\| \leq L(t) \|\xi - \eta\|
\end{equation*}
for all $\xi,\eta \in C$ and all $t \in \RR_0^+$.

\medskip
\noindent
\textbf{Assumption A2:}
$f:\RR_0^+ \times X \to X$ is linearly bounded, i.e.~there exist $\alpha,\beta \in L_{\rm loc}^\infty\big(\RR_0^+,\RR_0^+\big)$
such that
\begin{equation}
\label{eq:linearly-bounded}
  \big\|f(t,\xi) \big\| \leq \alpha(t)\|\xi(t)\| + \beta(t)
\end{equation}
for all $\xi \in X$ and all $t\in\RR_0^+$.

\medskip
\noindent
Finally, for $T \geq 0$ and continuous $f_i: \RR_0^+ \times X \to X$, $i = 1, \dots, m$ we define the integral operator
\begin{equation*}
  F: C\big([0,T], X\big) \times L^p\big([0,T],\RR^m\big) \to C\big([0,T], X\big)
\end{equation*}
\begin{equation}
  \label{eq:integral-operator}\tag{IO}
  F(x,u)(t) := \re^{tA}\xi_0 + \sum_{i=1}^m \int_0^t \re^{(t-s)A} u_i(s) f_i\big(s,x(s)\big) \,\rd s\,.
\end{equation}


\subsection{The case $p > 1$}
\label{subsec:application_p>1}

\begin{theorem}
  \label{thm:app_p>1}
  Let $X$ be a Banach space, let $A$ be the infinitesimal generator of a $C^0$-semigroup $\big(\re^{tA}\big)_{t \geq 0}$
  of bounded operators on $X$, and let $p > 1$. Moreover, let $f_i: \RR_0^+ \times X \to X$, $i = 1, \dots, m$ be
  continuous vector fields which satisfy Assumptions (A1) and (A2). Then for all $T \geq 0$, all $\xi_0 \in X$, and
  all $u \in L^p\big([0,T], \RR^m\big)$ the equation
  \begin{equation}
    \label{eq:unique_solution}
    \dot{x}(t) = Ax(t) + \sum_{i=1}^m u_i(s) f_i\big(t,x(t)\big)\,, \quad x(0) = \xi_0
  \end{equation}
  has a unique mild solution and the solution operator
  \begin{equation*}
    \Phi:L^p\big([0,T], \RR^m\big) \to C\big([0,T],X\big)
  \end{equation*}
  which assigns to each control $u \in L^p\big([0,T], \RR^m\big)$ the unique mild solution of \eqref{eq:unique_solution}
  is compact and locally Lipschitz continuous.
\end{theorem}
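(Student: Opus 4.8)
The plan is to verify the hypotheses of Corollary~\ref{cor:main_1} for the integral operator $F$ defined in \eqref{eq:integral-operator}, taking $X = C([0,T],X)$ (the state space of the fixed point problem), $P = L^p([0,T],\RR^m)$, and deducing compactness and local Lipschitz continuity of $\Phi$ directly from that corollary. So the real work is to check the four (plus one) conditions, where the use of $p>1$ enters crucially in establishing compactness of the partial maps $F_x$.

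First I would establish the \emph{hidden-contraction} and \emph{Lipschitz} properties in $x$ (conditions (a) and (b) of Corollary~\ref{cor:main_1}). Fix a bounded set $B \subset L^p$, so that $\|u\|_p \leq r$ for all $u \in B$. Using Assumption (A1), the class-$(M,\mu)$ bound $\|\re^{tA}\| \leq M\re^{\mu t}$, and Hölder's inequality to separate $u_i(s)$ from the Lipschitz factor, one estimates for $x,y \in C([0,T],X)$
\begin{equation*}
  \|F(x,u)(t) - F(y,u)(t)\| \leq M\re^{\mu T} \sum_{i=1}^m \int_0^t |u_i(s)|\,L(s)\,\|x(s)-y(s)\|\,\rd s.
\end{equation*}
Iterating this Volterra-type estimate $N$ times produces a factor of the form $\frac{(\text{const}\cdot\|u\|_1)^N}{N!}$ (essentially the estimate quoted from \cite{BCC2019a} in the introduction), which becomes less than $1$ for $N$ large since $\|u\|_1 \leq T^{1-1/p}\|u\|_p \leq T^{1-1/p} r$ is uniformly bounded on $B$. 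This gives condition (b); the single-step bound gives the uniform Lipschitz constant required for (a).

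The main obstacle is condition (d), \textbf{compactness of $F_x : L^p \to C([0,T],X)$} for fixed $x$. Here I would invoke the Arzelà--Ascoli theorem on $C([0,T],X)$, which requires two things of the image of a bounded set $B$: uniform boundedness (immediate from Assumption (A2)) and, more delicately, equicontinuity in $t$. Equicontinuity is precisely where $p>1$ is indispensable: writing the increment $F(x,u)(t') - F(x,u)(t)$ and using Hölder with exponent $p$, the $L^p$-norm of $u$ is controlled while the remaining $L^{p'}$-factor of the kernel tends to zero as $t' \to t$ \emph{uniformly in $u \in B$}, because $p' < \infty$ makes the relevant integral of $\|\re^{(t'-s)A} - \re^{(t-s)A}\|$ (together with the endpoint contribution) vanish. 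One must also confirm that for each fixed $t$ the set $\{F(x,u)(t) : u \in B\}$ is relatively compact in $X$; this is the genuinely infinite-dimensional point, and I would handle it by exploiting that the map $u \mapsto \int_0^t \re^{(t-s)A} u_i(s) f_i(s,x(s))\,\rd s$ is a bounded \emph{linear} operator from $L^p$ into $X$ whose compactness follows from the strong continuity of the semigroup (approximating the kernel by step functions and using that weak-$L^p$ convergence of $u$ forces convergence of the integrals). Together with equicontinuity, Arzelà--Ascoli then yields relative compactness of $F_x(B)$ in $C([0,T],X)$.

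Finally, conditions (c) (continuity of $F_x$) and (e) (local Lipschitz dependence on $u$ locally uniformly in $x$) are routine consequences of the same Hölder estimates: continuity follows from dominated convergence applied to the integral, and the local Lipschitz-in-$u$ bound follows by linearity of $F$ in $u$ combined with the linear-growth bound (A2) controlling $f_i(s,x(s))$ on the bounded range of a fixed continuous $x$. Having verified (a)--(e), Corollary~\ref{cor:main_1} delivers both the unique mild solution (as the fixed point $\Phi(u) = F(\Phi(u),u)$, which by \eqref{eq:integral-operator} is exactly \eqref{eq2:BMS} for the semi-linear system) and the asserted compactness and local Lipschitz continuity of $\Phi$, completing the proof.
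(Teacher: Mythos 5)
Your overall architecture matches the paper's: cast the mild-solution equation as the fixed point problem for the integral operator \eqref{eq:integral-operator} and verify the hypotheses of one of the compactness-principle corollaries. Your choice of Corollary~\ref{cor:main_1} (iterated Volterra estimates yielding a hidden contraction) instead of the paper's Corollary~\ref{cor:main_2} (weighted $\omega$-norms via Lemma~\ref{lem3:app-contraction}) is a legitimate variant---the paper itself remarks after Lemma~\ref{lem5:app-contraction} that the iteration method works for $p>1$ as well---and you correctly locate the role of $p>1$ in the equicontinuity of $F_x(B)$, which is exactly what the Appendix counter-example shows to fail for $p=1$.

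There is, however, one genuine gap. Assumption (A1) provides a Lipschitz rate $L$ only on \emph{bounded subsets of} $X$, so your displayed estimate
\begin{equation*}
  \|F(x,u)(t)-F(y,u)(t)\|\;\leq\; M\re^{\mu T}\sum_{i=1}^m\int_0^t|u_i(s)|\,L(s)\,\|x(s)-y(s)\|\,\rd s
\end{equation*}
asserted for \emph{all} $x,y\in C([0,T],X)$ is unjustified: no single $L$ works globally, while conditions (a) and (b) of Corollary~\ref{cor:main_1} demand the estimates on all of $C([0,T],X)$. The paper closes this gap with Lemma~\ref{lem:cutoff}: a Gronwall a priori bound (using the linear-boundedness (A2)) confines every mild solution to a fixed ball $B_R(\xi_0)$ depending only on $T$, $\|u\|_p\leq r$ and $\xi_0$, after which each $f_i$ is replaced by $\hat f_i=\rho_N(\cdot-\xi_0)f_i$, which is globally Lipschitz and has the same mild solutions. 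Without this reduction you obtain neither the (hidden) contraction nor, consequently, existence and uniqueness of the fixed point. Two smaller imprecisions in your compactness step: relative compactness of $\{F(x,u)(t):u\in B\}$ does not follow from weak $L^p$-compactness alone, since a bounded linear operator maps weakly convergent sequences to weakly convergent ones, not norm-convergent ones; one needs that the values lie in a multiple of $\overline{\conv}\bigl(\pm\{\re^{\tau A}f(s,x(s)):\tau,s\in[0,T]\}\bigr)$, the closed convex hull of a compact set, or the paper's device of approximating $s\mapsto f(s,x(s))$ by a function with finite range so that Arzel\`a--Ascoli applies in a finite-dimensional subspace. Likewise $\|\re^{(t'-s)A}-\re^{(t-s)A}\|\to0$ fails in operator norm for a general $C^0$-semigroup; the equicontinuity argument must invoke strong continuity, uniform on the compact set $\{f(s,x(s)):s\in[0,T]\}$.
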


\begin{remark}
  \begin{enumerate}
  \item[(a)]
    The assumption $L_i \in L_{\rm loc}^\infty\big(\RR_0^+,\RR_0^+\big)$ in Theorem \ref{thm:app_p>1}, cf.~Assumption (A1),
    can be readily improved by $L_i \in L_{\rm loc}^{q'}\big(\RR_0^+,\RR\big)$ with $q' > q$, where $q$ denotes the conjugate
    exponent to $p$, i.e.~$\frac{1}{p} +\frac{1}{q} =1$. The case $q' = q$ is more delicate, because here the proof of
    Lemma \ref{lem3:app-contraction} below will break down. Actually, the case $q' = q$ resembles case $p=1$ of Subsection
    \ref{subsec:application_p=1} and therefore the technique used in Lemma \ref{lem5:app-contraction} can be successfully
    employed to prove at least existence and uniqueness of solutions. A similar comment applies to
    $\alpha_i,\beta_i \in L_{\rm loc}^\infty\big(\RR_0^+,\RR_0^+\big)$, cf.~Assumption (A2). \smallskip
  \item[(b)]
    Moreover, we expect that the continuity assumption on $f_i$ can be weakened (with respect to $t$) in the sense
    of Carath\'{e}odory's theorem, cf.~e.g.~\cite{CodLev87,Sontag98}.\smallskip
  \item[(c)]
    Conditions which guarantee that mild solutions are actually classical or Cara\-th\'{e}odory solutions can be found in
    \cite[Sec.~6.1]{Pazy1983}.\smallskip
  \item[(d)]
    The existence and uniqueness result of Ball, Marsden, and Slemrod given in \cite[Thm.~2.5]{BMS1982} is stronger than
    that in Theorem \ref{thm:app_p>1} because there Lipschitz continuity of the non-linear part is only required locally
    and not on bounded sets. However, their non-controllability result \cite[Thm.~3.6]{BMS1982} is considerably
      weaker than Corollary \ref{cor:app_p>1} below as it is restricted to bilinear systems, i.e.~to bounded linear
    vector fields $f_i$. Related results also dealing with non-linear $f_i$ can be found in \cite{ChTh2019,ChTh2020}.
  \end{enumerate}
\end{remark}

\noindent
As an immediate consequence of Theorem \ref{thm:app_p>1} and Lemma \ref{lem2:app-compactness} below we obtain
the following generalization of Ball-Marsden-Slemrod's non-controllability statement.

\begin{corollary}
  \label{cor:app_p>1}
  Let the notation and assumptions be as in Theorem \ref{thm:app_p>1}. Moreover, assume that all $f_i$, $i = 1, \dots, m $
  are autonomous. Then for all $\xi_0 \in X$ and all $p > 1$ the reachable set $\cR^p(\xi_0)$ of
  \begin{equation}
  \label{eq:bilin_control_p>1}
  \dot{x}(t) = Ax(t) + \sum_{i=1}^m u_i(t) f_i\big(x(t)\big)\,,
  \quad x(0) = \xi_0\,, \quad u_i \in L^p_{\rm loc}\big([0,\infty), \RR\big)
  \end{equation}
  can be written as a countable union of relatively compact sets and has therefore no interior points if $X$ is
  infinite-dimensional.
\end{corollary}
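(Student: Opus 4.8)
The plan is to exhibit $\cR^p(\xi_0)$ as a countable union of relatively compact subsets of $X$ and then to invoke Baire's category theorem together with the fact that, in an infinite-dimensional normed space, relatively compact sets are nowhere dense. First I would reduce the two continuous parameters underlying $\cR^p(\xi_0)$---the time horizon $T \geq 0$ and the control $u$ ranging over all of $L^p$---to countable index sets. Extending any control by zero beyond its original horizon shows $\cR^p_{\leq T}(\xi_0) \subseteq \cR^p_{\leq T'}(\xi_0)$ for $T \leq T'$, so that $\cR^p(\xi_0) = \bigcup_{T \in \NN} \cR^p_{\leq T}(\xi_0)$. Since every $u \in L^p([0,T],\RR^m)$ has finite norm, one may further stratify by the control size and write $\cR^p_{\leq T}(\xi_0) = \bigcup_{n \in \NN} S_{n,T}$ with $S_{n,T} := \{x(t,\xi_0,u) : t \in [0,T],\ \|u\|_p \leq n\}$. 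Combining the two decompositions presents $\cR^p(\xi_0)$ as a countable union, indexed by $(n,T) \in \NN \times \NN$, of the sets $S_{n,T}$.

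It then remains to show that each $S_{n,T}$ is relatively compact in $X$. Here I would invoke Theorem \ref{thm:app_p>1}: the solution operator $\Phi : L^p([0,T],\RR^m) \to C([0,T],X)$ is compact, and $\{u : \|u\|_p \leq n\}$ is bounded, so its image $\Phi(\{u : \|u\|_p \leq n\})$ is relatively compact in $C([0,T],X)$. Observing that $S_{n,T}$ is precisely the image of this relatively compact family together with the compact interval $[0,T]$ under the continuous evaluation map $(y,t) \mapsto y(t)$, Lemma \ref{lem2:app-compactness} delivers relative compactness of $S_{n,T}$ in $X$.

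Finally, if $X$ is infinite-dimensional, then each $\overline{S_{n,T}}$ is compact, and a compact set in an infinite-dimensional normed space has empty interior---otherwise it would contain a closed ball, contradicting the non-compactness of closed balls (Riesz's lemma)---so that each $\overline{S_{n,T}}$ is nowhere dense. Consequently $\cR^p(\xi_0) \subseteq \bigcup_{n,T} \overline{S_{n,T}}$ is of first category, and since $X$ is a complete metric space, Baire's category theorem forbids a set of first category from possessing interior points, which is exactly the assertion. I expect the only genuinely delicate ingredient to be the evaluation step packaged in Lemma \ref{lem2:app-compactness}, which must transfer relative compactness from the function space $C([0,T],X)$ to relative compactness of the pointwise-in-time image in $X$; the countable-union bookkeeping (including the extension-by-zero nesting, where the autonomous hypothesis keeps us in the Ball--Marsden--Slemrod reachable-set framework) and the concluding Baire argument are then routine.
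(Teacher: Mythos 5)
Your proposal is correct and follows essentially the same route as the paper: decompose $\cR^p(\xi_0) = \bigcup_{T \in \NN}\bigcup_{n \in \NN}\cR^{p,n}_{\leq T}(\xi_0)$, obtain relative compactness of each piece from Theorem \ref{thm:app_p>1} together with Lemma \ref{lem2:app-compactness}, and conclude via Baire's category theorem and the fact that relatively compact sets in an infinite-dimensional Banach space are nowhere dense. The extra details you supply (extension by zero for the nesting in $T$, Riesz's lemma for the nowhere-density) are consistent with, and merely elaborate on, the paper's argument.
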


\noindent
Here we assumed that the vector fields $f_i$ are autonomous since this is the standard setting in control theory. Of course,
the result holds for time-dependent vector fields as well---but then $\cR^p(\xi_0)$ depends in general additionally on the initial
time of \eqref{eq:bilin_control_p>1} and would not be called reachable set of $\xi_0$.

\begin{proof}[Proof of Corollary \ref{cor:app_p>1}]
  Theorem \ref{thm:app_p>1} and Lemma \ref{lem2:app-compactness} below immediately imply that the reachable set
  up to time $T$ (under bounded controls $\|u\|_p \leq r$), i.e.
  \begin{equation*}
    \cR^{p,r}_{\leq T}(\xi_0) := \{x(t,\xi_0,u)  :  t \in [0,T]\,, \|u\|_p \leq r\}\,,
  \end{equation*}
  is relatively compact. Moreover, the identity
  \begin{equation*}
    \cR^p(\xi_0) = \bigcup_{T \in \NN} \bigcup_{n \in \NN} \cR^{p,n}_{\leq T}(\xi_0)
  \end{equation*}
  shows that the reachable set $\cR^p(\xi_0)$ is a countable union of relatively compact sets. Finally, a straightforward application
  of Baire's category theorem combined with fact that relatively compact sets in an infinite-dimensional Banach space are nowhere
  dense shows that $\cR^p(\xi_0)$ has no interior points.
\end{proof}

\begin{lemma}
  \label{lem2:app-compactness}
  Let $S \subset C\big([0,T], X\big)$ be relatively compact. Then the evaluation set
  ${\rm Ev}(S) := \{x(t)  :  x \in S\,, t \in [0,T]\} \subset X$ is relatively compact, as well.
\end{lemma}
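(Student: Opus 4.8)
The plan is to realize $\mathrm{Ev}(S)$ as a subset of the image of a compact set under a continuous map. First I would invoke that $X$ is a Banach space, so that $C\big([0,T],X\big)$ equipped with the sup-norm is complete; consequently the relatively compact set $S$ has compact closure $\overline{S}$. Since $[0,T]$ is compact as well, the product $\overline{S} \times [0,T]$ is compact in $C\big([0,T],X\big) \times [0,T]$.

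Next I would introduce the evaluation map
\begin{equation*}
  E: C\big([0,T],X\big) \times [0,T] \to X\,, \qquad E(x,t) := x(t)\,,
\end{equation*}
and observe that $\mathrm{Ev}(S) = E\big(S \times [0,T]\big) \subseteq E\big(\overline{S} \times [0,T]\big)$. The key step is to show that $E$ is jointly continuous. For this I would fix $(x_0,t_0)$ and estimate, for any $(x,t)$,
\begin{equation*}
  \big\|E(x,t) - E(x_0,t_0)\big\| \leq \|x(t) - x_0(t)\| + \|x_0(t) - x_0(t_0)\| \leq \|x - x_0\|_\infty + \|x_0(t) - x_0(t_0)\|\,.
\end{equation*}
The first term tends to $0$ as $x \to x_0$ in the sup-norm, while the second tends to $0$ as $t \to t_0$ because $x_0 \in C\big([0,T],X\big)$ is continuous; hence $E$ is continuous at $(x_0,t_0)$.

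Finally, since $E$ is continuous and $\overline{S} \times [0,T]$ is compact, the image $E\big(\overline{S} \times [0,T]\big)$ is compact in $X$. As $\mathrm{Ev}(S)$ is contained in this compact set, it is relatively compact, which is the assertion. I expect the only point genuinely requiring care to be the joint continuity of $E$; everything else is the routine fact that a continuous map carries compact sets to compact sets.

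Alternatively, and more in the spirit of the $\varepsilon$-net arguments used throughout this section, one could argue directly: given $\varepsilon > 0$, pick a finite $\frac{\varepsilon}{2}$-net $x_1,\dots,x_n$ of $S$ in $C\big([0,T],X\big)$. Each image $x_i\big([0,T]\big)$ is compact, being the continuous image of the compact interval $[0,T]$, so the finite union $\bigcup_{i=1}^n x_i\big([0,T]\big)$ is compact and admits a finite $\frac{\varepsilon}{2}$-net. The sup-norm estimate $\|x(t) - x_i(t)\| \leq \|x - x_i\|_\infty$ then shows that this net is in fact a finite $\varepsilon$-net of $\mathrm{Ev}(S)$, whence $\mathrm{Ev}(S)$ is totally bounded and therefore relatively compact.
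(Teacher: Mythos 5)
Your proposal is correct, and in fact you give two proofs: your ``alternative'' $\varepsilon$-net argument is essentially word-for-word the paper's own proof (finite $\tfrac{\varepsilon}{2}$-net $x_1,\dots,x_N$ of $S$, cover the compact union $\bigcup_i x_i([0,T])$ by a finite $\tfrac{\varepsilon}{2}$-net in $X$, and combine via $\|x(t)-x_i(t)\|\leq\|x-x_i\|_\infty$). Your primary argument via the evaluation map $E(x,t):=x(t)$ is a genuinely different and somewhat more conceptual route: you verify joint continuity of $E$, note that $\overline{S}\times[0,T]$ is compact (relative compactness means compact closure, so no completeness of $C([0,T],X)$ is actually needed here), and conclude that $\mathrm{Ev}(S)\subseteq E\big(\overline{S}\times[0,T]\big)$ sits inside a compact set. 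This buys you a proof with no explicit $\varepsilon$-bookkeeping, at the cost of introducing the product space and checking joint continuity; the paper's direct net argument is more elementary and matches the style of the surrounding lemmas (e.g.~Lemma~\ref{lem1:precompact}), which all proceed by exhibiting finite $\varepsilon$-nets. Both are complete and correct.
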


\begin{proof}
  Let $\varepsilon > 0$. Since $S \subset C\big([0,T], X\big)$ is relatively compact by assumption, we can choose
  a finite $\frac{\varepsilon}{2}$-net $x_1, \dots x_N $ of $S$. 
  Moreover, compactness of $[0,T]$ implies that the images of $x_i$, $i=1,\dots,N$ are also
  compact. Hence we can cover all $x_i\big([0,T]\big)$ and their union by a finite $\frac{\varepsilon}{2}$-net
  $\xi_1, \dots \xi_M \in X$.
  Now, for $x \in S$ and $t \in [0,T]$ we can choose $x_i$ and $\xi_j$ such that
  \begin{equation*}
    \|x(t) - \xi_j\| \leq \|x(t) - x_i(t)\| + \|x_i(t) - \xi_j\|
    < \|x - x_i\|_\infty + \frac{\varepsilon}{2} < \varepsilon\,.
  \end{equation*}
  This shows that $\xi_j$, $j = 1, \dots, M$ yields a finite $\varepsilon$-net of ${\rm Ev}(S)$ and thus ${\rm Ev}(S)$
  is relatively compact.
\end{proof}

\begin{remark}
  A minor modification of Lemma 3.7 in \cite{BMS1982} to non-linear maps satisfying Assumption (A1) allows to show
  that the integral operator $F$ given by \eqref{eq:integral-operator} is weakly continuous in $u$ and thus the solution
  operator $u \to \Phi(u)$ will be weakly continuous, too. Hence for $p >1$ the sets $\cR^{p,r}_{\leq T}(\xi_0)$
  are even compact and not only relatively compact.
\end{remark}

In order to prove Theorem \ref{thm:app_p>1}, we will show that the integral operator $F$ given by \eqref{eq:integral-operator}
satisfies the conditions of Theorem \ref{thm:main_1} or, more precisely, of Corollary \ref{cor:main_2}. To this end
we define on $C\big([0,T], X\big)$ the weighted $\omega$-norms
\begin{equation*}
  \|x\|_{\omega} := \max_{t \in [0,T]}\re^{-\omega t}\|x(t)\|\,.
\end{equation*}
Obviously, $\|\cdot\|_{0}$ coincides with the standard maximum norm on $C\big([0,T], X\big)$ and all $\omega$-norms
are equivalent on $C\big([0,T], X\big)$. To avoid confusion, whenever necessary, we will write $C\big([0,T], X\big)_\omega$
to indicate that a particular statement about $C\big([0,T], X\big)$ holds (only) with respect to a particular
$\omega$-norm.

\medskip
\noindent
\textbf{Note:} For simplicity, we will prove the following auxiliary results for the case $m=1$. This is justified because
the case $m > 1$ can be handled completely analogously.

\begin{lemma}
  \label{lem1:app-continuity}
  Let $f_i$, $i = 1, \dots, m$ be continuous and $p \geq 1$. Then the operator
  $F: C\big([0,T], X\big) \times L^p\big([0,T],\RR^m\big) \to C\big([0,T], X\big)$ as given in \eqref{eq:integral-operator}
  is globally Lipschitz in $u$ locally uniformly in $x$.
\end{lemma}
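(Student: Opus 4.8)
The plan is to exhibit, for every prescribed $x_0 \in C\big([0,T],X\big)$, a radius $\rho>0$ and a constant $L\ge 0$ such that
\begin{equation*}
  \big\|F(x,u)-F(x,v)\big\|_\infty \le L\,\|u-v\|_p
\end{equation*}
holds for \emph{all} $u,v \in L^p\big([0,T],\RR^m\big)$ and all $x$ with $\|x-x_0\|_\infty<\rho$; this is exactly the assertion ``globally Lipschitz in $u$ locally uniformly in $x$''. Since all $\omega$-norms are equivalent, it is harmless to work with the standard maximum norm $\|\cdot\|_0=\|\cdot\|_\infty$, and following the Note I carry out the case $m=1$. The starting point is the identity
\begin{equation*}
  F(x,u)(t)-F(x,v)(t)=\int_0^t \re^{(t-s)A}\big(u(s)-v(s)\big)\,f\big(s,x(s)\big)\,\rd s,
\end{equation*}
in which the affine term $\re^{tA}\xi_0$ drops out because it is independent of the control. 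Since $A$ is of class $(M,\mu)$ with $\mu\ge 0$ and $t-s\le T$, one bounds $\|\re^{(t-s)A}\|\le M\re^{\mu T}$, so everything reduces to controlling $\|f(s,x(s))\|$ uniformly in $s\in[0,T]$ and in $x$ near $x_0$.

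The crucial step---and the only genuinely non-trivial one---is this local uniform bound on the nonlinearity: I claim there exist $\rho>0$ and $R\ge 0$ with $\|f(s,y)\|\le R$ whenever $s\in[0,T]$ and $\|y-x_0(s)\|<\rho$. In infinite dimensions this cannot be read off from mere local boundedness of $f$, because the $\rho$-neighborhood of the compact curve $x_0([0,T])$ fails to be relatively compact; instead I argue by contradiction along the curve itself. If no such pair $(\rho,R)$ existed, one could pick $s_n\in[0,T]$ and $y_n\in X$ with $\|y_n-x_0(s_n)\|\le 1/n$ and $\|f(s_n,y_n)\|>n$. Passing to a subsequence with $s_n\to s_*$ by compactness of $[0,T]$ and using continuity of $x_0$ gives $y_n\to x_0(s_*)$, hence $(s_n,y_n)\to(s_*,x_0(s_*))$; continuity of $f$ then forces $f(s_n,y_n)\to f(s_*,x_0(s_*))$, contradicting $\|f(s_n,y_n)\|>n$. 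With $(\rho,R)$ secured, every $x$ in the neighborhood $U_0:=\{x:\|x-x_0\|_\infty<\rho\}$ satisfies $\|f(s,x(s))\|\le R$ for all $s\in[0,T]$.

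Combining the two ingredients finishes the argument. For $x\in U_0$ one obtains
\begin{equation*}
  \big\|F(x,u)(t)-F(x,v)(t)\big\|\le M\re^{\mu T} R\int_0^t |u(s)-v(s)|\,\rd s \le M\re^{\mu T} R\,\|u-v\|_1,
\end{equation*}
and H\"older's inequality with the conjugate exponent $q$, where $1/p+1/q=1$, yields $\|u-v\|_1\le T^{1-1/p}\|u-v\|_p$, the factor being $1$ when $p=1$. Taking the supremum over $t\in[0,T]$ gives the claim with $L:=M\re^{\mu T}T^{1-1/p}R$, a constant depending only on $T$, $p$, the semigroup data $(M,\mu)$, and the neighborhood $U_0$, but not on $u$ or $v$. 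The general case $m>1$ is identical up to summing the $m$ integral terms and absorbing a dimensional constant relating $\sum_i\|u_i-v_i\|_p$ to $\|u-v\|_p$. The one subtlety to watch is that $R$ must be uniform over $x\in U_0$, which is precisely what the tube-type compactness argument along the graph of $x_0$ delivers.
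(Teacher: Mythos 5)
Your proposal is correct and follows essentially the same route as the paper: both isolate the difference $F(x,u)-F(x,v)$ as a single integral, bound the semigroup by $M\re^{\mu T}$, apply H\"older's inequality to pass from the $L^1$- to the $L^p$-norm of $u-v$, and reduce the local uniformity in $x$ to a uniform bound $R$ on $\|f(s,y)\|$ for $y$ in a tube around the compact curve $x_0([0,T])$. The only (minor) difference is how that tube bound is obtained --- you argue by contradiction via sequential compactness of $[0,T]$ and continuity of $f$ at the limit point, whereas the paper extracts a finite subcover of $x_0([0,T])$ and bounds $\|f\|$ by $K+1$ on the resulting neighborhood; both arguments are valid and equally elementary.
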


\begin{proof}
  Global Lipschitz continuity of $F_x$ readily follows from the estimate
  \begin{equation}
    \label{eq:Lipschitz_in_u}
    \begin{split}
      \big\|  F_x(v) - F_x(u)\big\|_\infty
      &= \max_{t \in [0,T]} \bigg\| \int_0^t \re^{(t-s)A} \big(v(s)-u(s)\big) f\big(s,x(s)\big) \,\rd s\bigg\|\\
      & \leq \max_{t \in [0,T]} \int_0^t M \re^{\mu (t-s)} \big|v(s)-u(s)\big| \Big\|f\big(s,x(s)\big)\Big\| \,\rd s\\
      & \leq K M \re^{\mu T} \int_0^T \big|v(s)-u(s)\big| \,\rd s\\
      & \leq
      \begin{cases}
        K M \re^{\mu T} \big\|v - u\big\|_1  & \text{for $p = 1$,}\\[2mm]
        K M \re^{\mu T} T^{1/q} \big\|v - u\big\|_p  & \text{for $p > 1$,}
      \end{cases}
    \end{split}
  \end{equation}
  where $K \geq 0$ is defined by $K := \max_{t \in [0,T]} \big\|f(t,x(t))\big\|$
  and $T^{1/q}$ results from H\"{o}lder's inequality (with $\frac{1}{p} + \frac{1}{q} = 1$).

  \medskip
  \noindent
  To see that the above estimate can be made locally uniformly in $x$ one can proceed as follows: Due to continuity
  of $f$ there exist $\delta_t>0$ and $\varepsilon_t>0$ such that
  \begin{equation*}
    \big\|f(s,\eta) - f\big(t,x(t)\big)\big\| \leq 1
    \quad \text{for all $s \in (t-\delta_t,t+\delta_t)$ and all $\eta \in B_{\varepsilon_t}\big(x(t)\big)$.}
  \end{equation*}
  Now the collection of all open balls $B_{\varepsilon_t/2}\big(x(t)\big)$, $t \in [0,T]$ yields an open cover of the
  compact set $x\big([0,T]\big)$ and consequently there exists a finite subcover
  \begin{equation*}
    B_{\varepsilon_1/2}\big(x(t_1)\big), \dots, B_{\varepsilon_N/2}\big(x(t_N)\big) \quad \text{with $\varepsilon_i :=\varepsilon_{t_i} $.}
  \end{equation*}
  Set $\varepsilon^* := \min\{\varepsilon_1, \dots, \varepsilon_N\}$. Then for all  $t \in [0,T]$ one can choose $t_i\in [0,T]$
  such that $\|x(t) - x(t_i)\| < \varepsilon_i/2$ and hence for all $y \in B_{\varepsilon^*/2}(x) \subset C\big([0,T], X\big)$ one has
  \begin{equation*}
    \|y(t) - x(t_i)\| \leq \|y(t) - x(t)\| + \|x(t) - x(t_i)\| < \varepsilon_i\,.
  \end{equation*}
  This implies
  \begin{equation*}
    \big\|f(t,y(t))\big\| \leq \big\|f(t,y(t)) - f(t,x(t_i))\big\| + \big\|f(t,x(t_i))\big\| \leq 1 + K
  \end{equation*}
  and thus
  \begin{equation*}
    \sup_{y \in B_{\varepsilon^*/2}(x)}\max_{t \in [0,T]} \big\|f(t,y(t))\big\| \leq 1 + K\,.
  \end{equation*}
  It follows that estimate \eqref{eq:Lipschitz_in_u} is even locally uniformly in $x$ once $K$ is replaced by $K+1$.
\end{proof}

\begin{remark}
    One can significantly simplify the second part of the above proof by assuming that all $f_i$, $i = 1, \dots, m$
    satisfy an additional Lipschitz condition in $\xi$ as in Theorem \ref{thm:app_p>1}.
\end{remark}

\begin{lemma}
  \label{lem1:app-compactness}
  Let $f_i$, $i = 1, \dots, m$ be continuous and $p > 1$. Then for all $x \in C\big([0,T], X\big)$ the operator
  $F_x := F(x,\,\cdot\,): L^p\big([0,T],\RR^m\big) \to C\big([0,T], X\big)$ is compact.
\end{lemma}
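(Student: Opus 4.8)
The plan is to fix $x \in C\big([0,T],X\big)$, abbreviate $g(s) := f\big(s,x(s)\big)$, and show that $F_x$ maps every ball $\{u : \|u\|_p \le r\}$ to a relatively compact subset of $C\big([0,T],X\big)$. Since $f$ and $x$ are continuous, $g$ lies in $C\big([0,T],X\big)$; in particular it is bounded, $\|g(s)\| \le K$, and its range $g\big([0,T]\big)$ is compact. Following the Note, I treat $m=1$. The term $\re^{tA}\xi_0$ is a single fixed element of $C\big([0,T],X\big)$ (the orbit map being continuous), so it merely translates the image set and is irrelevant for relative compactness; thus I focus on the linear operator $u \mapsto \big(t \mapsto \int_0^t \re^{(t-s)A} u(s) g(s)\,\rd s\big)$. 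The tool will be the vector-valued Arzel\`a--Ascoli theorem: a subset of $C\big([0,T],X\big)$ is relatively compact if and only if it is equicontinuous and pointwise relatively compact in $X$. I verify these two properties for $\{F_x(u) : \|u\|_p \le r\}$.

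For pointwise relative compactness, fix $t \in [0,T]$ and set $h_t(s) := \re^{(t-s)A}g(s)$. By strong continuity of the semigroup together with continuity of $g$, the map $h_t$ is continuous on $[0,t]$, hence uniformly continuous with compact range. Given $\varepsilon > 0$, I partition $[0,t]$ into intervals $I_1,\dots,I_k$ on each of which $h_t$ oscillates by at most $\delta$, pick $s_j \in I_j$, and compare $\int_0^t u(s) h_t(s)\,\rd s$ with the finite sum $\sum_j \big(\int_{I_j} u(s)\,\rd s\big)\, h_t(s_j)$. The error is at most $\delta \int_0^t |u(s)|\,\rd s \le \delta r T^{1/q}$ by H\"older, while the finite sums lie in the finite-dimensional space $\linspan\{h_t(s_1),\dots,h_t(s_k)\}$ with coefficients bounded by $\big|\int_{I_j}u\big| \le r|I_j|^{1/q}$, hence in a bounded, therefore relatively compact, subset of that space. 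Letting $\delta \to 0$ shows that $\{F_x(u)(t) : \|u\|_p \le r\}$ is totally bounded. I expect this step to be the main obstacle, since $X$ is infinite-dimensional and the semigroup need not be compact: the compactness here is bought entirely from the continuity (compact range) of the kernel $h_t$ combined with the H\"older control on $u$.

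For equicontinuity, take $t_1 < t_2$ and split
\[
  F_x(u)(t_2) - F_x(u)(t_1) = \int_{t_1}^{t_2}\re^{(t_2-s)A}u(s)g(s)\,\rd s + \int_0^{t_1}\re^{(t_1-s)A}\big(\re^{(t_2-t_1)A}-I\big)u(s)g(s)\,\rd s.
\]
The first integral is bounded by $M\re^{\mu T}K\,r\,(t_2-t_1)^{1/q}$ via H\"older, and here the hypothesis $p>1$ (so that $q<\infty$) is essential: for $p=1$ this term carries no decay in $t_2-t_1$, which is precisely why the argument, and the compactness of $F_x$, break down at $p=1$. The second integral is bounded by $M\re^{\mu T}\,r\,T^{1/q}\sup_{s}\big\|(\re^{(t_2-t_1)A}-I)g(s)\big\|$, and this supremum tends to $0$ as $t_2-t_1 \to 0$ because a $C^0$-semigroup converges strongly to the identity uniformly on the compact set $g\big([0,T]\big)$. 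Both bounds are uniform over $\|u\|_p \le r$, so the family is equicontinuous. Arzel\`a--Ascoli then yields relative compactness of $F_x\big(\{\|u\|_p\le r\}\big)$, completing the proof.
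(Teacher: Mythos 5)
Your proof is correct, but it is organized differently from the paper's. The paper approximates the fixed kernel $s \mapsto f\big(s,x(s)\big)$ uniformly by a piecewise constant, finite-range function, so that the whole operator $F_x$ is uniformly (over the ball) $\tfrac{\varepsilon}{2}$-close to an operator whose image consists of functions with values in a fixed finite-dimensional subspace $X_\varepsilon$; it then applies the classical (finite-dimensional) Arzel\`a--Ascoli theorem to that approximating family and transfers the finite $\tfrac{\varepsilon}{2}$-net back to $F_x(B)$. You instead verify the two hypotheses of the Banach-space-valued Arzel\`a--Ascoli theorem directly for the original family: pointwise relative compactness at each fixed $t$ via a finite partition on which the continuous, compact-range kernel $h_t(s)=\re^{(t-s)A}g(s)$ oscillates little (so the approximating sums live in a bounded subset of a finite-dimensional span), and equicontinuity via the splitting $\re^{(t_2-s)A}-\re^{(t_1-s)A}=\re^{(t_1-s)A}\big(\re^{(t_2-t_1)A}-I\big)$ together with uniform strong convergence of $\re^{hA}\to I$ on the compact set $g([0,T])$. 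Both arguments locate the role of $p>1$ in the same place (the H\"older factor $(t_2-t_1)^{1/q}$ controlling equicontinuity, consistent with the counter-example in Appendix~A), and both ultimately rest on reducing to finite-dimensional ranges; what your route buys is a cleaner handling of the semigroup factor, which you keep inside the kernel and control through joint continuity of $(\tau,\xi)\mapsto\re^{\tau A}\xi$, rather than folding it into the piecewise-constant approximation as the paper's first displayed estimate does. The price is that you invoke the vector-valued version of Arzel\`a--Ascoli (equicontinuity plus pointwise relative compactness), which is standard but not the version cited in the paper; if you want to stay with the scalar/finite-dimensional statement of \cite[Thm.~7.25]{Rudin87}, the paper's uniform finite-rank approximation is the natural workaround.
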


\begin{proof}
  Without loss of generality let $B := K_1(0)$ be the closed unit ball of $L^p\big([0,T],\RR\big)$ and let
  $\varepsilon > 0$. We have to find a finite $\varepsilon$-net for the image $F_x\big(B\big) \subset C\big([0,T], X\big)$.
  To this end, we consider the set $K := \big\{f\big(t,x(t)\big) \;:\; t \in [0,T]\big\}$ which is obviously
    compact as $x(\cdot)$ and $f$ are continuous. By Lemma \ref{lem3:compactness_p=1} one can choose disjoint
    Borel sets $\Delta_i \subset [0,T]$, $i = 1, \dots, N$ and $S_j \subset K$, $j = 1, \dots, M$ as well as $\xi_{i,j} \in X$
    such that the  uniform approximation
    \begin{equation*}
      \Big\|\re^{tA}\xi - \sum_{i=1}^N\sum_{j=1}^M\chi_{\Delta_i}(t) \chi_{S_j}(\xi)\xi_{ij}\Big\| \leq \frac{\varepsilon}{2 T^{1/q}}
    \end{equation*}
    holds for all $t \in [0,T]$ and all $\xi \in K$. It follows
    \begin{equation*}
      \begin{split}
        \max_{t \in [0,T]}&\Big\|F_x(u)(t) -\Big( \re^{tA}\xi_0 + \int_0^tu(s)\sum_{i=1}^N\sum_{j=1}^M\chi_{\Delta_i}(t-s) \chi_{S_j}\big(f(s,x(s))\big)\xi_{ij} \,\rd s\Big)\Big\| \\
      &  \leq \max_{t \in [0,T]}\int_0^t \big|u(s)\big| \Big\| \re^{(t-s)A}f(s,x(s)) - \sum_{i=1}^N\sum_{j=1}^M\chi_{\Delta_i}(t-s) \chi_{S_j}\big(f(s,x(s))\big)\xi_{ij}\Big\| \,\rd s\\
      &\leq \frac{\varepsilon}{2T^{1/q}} \int_0^T \big|u(s)\big| \,\rd s \leq \frac{\varepsilon\|u\|_p}{2} \leq \frac{\varepsilon}{2}\,,
    \end{split}
  \end{equation*}
  for all $u \in K_1(0)$, where we used H\"older's inequality in the second-to-last step. Thus it suffices to show that the set
  \begin{equation*}
    \cS := \Bigg\{t \mapsto \int_0^t u(s)\sum_{i=1}^N\sum_{j=1}^M\chi_{\Delta_i}(t-s) \chi_{S_j}\big(f(s,x(s))\big)\xi_{ij}\,\rd s  : u \in B \Bigg\} \subset C([0,T],X)
  \end{equation*}
  has a finite $\frac{\varepsilon}{2}$-net $y_{1}, \dots, y_{N}$ because then the previous estimates guarantee that
  $y_{1}, \dots, y_{N}$  translated by $y_0: t \mapsto \re^{tA}\xi_0$ yields an $\varepsilon$-net of $F_x(B)$. Note that
  integrability of $s \mapsto \sum_{i=1}^N\sum_{j=1}^M\chi_{\Delta_i}(t-s) \chi_{S_j}\big(f(s,x(s))\big)\xi_{ij}$ follows readily
  from its measurability and essential boundedness.
  Since $\cS$ is obviously bounded and all its functions take their values in the finite-dimensional subspace
    $X_\varepsilon := \linspan\{\xi_{ij} \;:\; i = 1, \dots, N\,, j = 1, \dots, M\}$ we can invoke Arzel\`{a}--Ascoli's theorem
    \cite[Thm.~7.25]{Rudin87} to prove relative compactness of $\cS$, that is, we have to show that $\cS$ is equicontinuous.
    To this end we consider 
  \begin{equation*}
    \begin{split}
      & \Big\|\int_0^{t+\delta} u(s) \sum_{i=1}^N\sum_{j=1}^M\chi_{\Delta_i}(t-s) \chi_{S_j}\big(f(s,x(s))\big)\xi_{ij}\,\rd s\\
      & \quad - \int_0^{t}u(s)  \sum_{i=1}^N\sum_{j=1}^M\chi_{\Delta_i}(t-s) \chi_{S_j}\big(f(s,x(s))\big)\xi_{ij} \,\rd s\Big\| \\
      & = \Big\|\int_t^{t+\delta}u(s) \sum_{i=1}^N\sum_{j=1}^M\chi_{\Delta_i}(t-s) \chi_{S_j}\big(f(s,x(s))\big)\xi_{ij} \,\rd s \Big\|
      \leq \max_{i = 1, \dots, N \atop j = 1, \dots, M}\|\xi_{ij}\| \int_t^{t+\delta}\big|u(s)\big|\,\rd s\\
      & \leq \max_{i = 1, \dots, N \atop j = 1, \dots, M}\|\xi_{ij}\| \delta^{1/q} \|u\|_p
      \leq \max_{i = 1, \dots, N \atop j = 1, \dots, M}\|\xi_{ij}\| \delta^{1/q}\,,
    \end{split}
  \end{equation*}
  where the second-to-last estimate follows again from H\"older's inequality. This clearly shows equicontinuity of $\cS$ and
  concludes the proof.
\end{proof}

\begin{lemma}
  \label{lem3:app-contraction}
  Let $f_i$, $i = 1, \dots, m$ be globally Lipschitz in $\xi$ with $L^\infty_{\rm loc}$-Lipschitz rate and let $p > 1$.
  Then there exists $\omega \geq 0$ such that
    \begin{equation*}
      F: C\big([0,T],X\big)_\omega \times L^p\big([0,T],\RR\big) \to C\big([0,T],X\big)_\omega
    \end{equation*}
    is a contraction in $x$ uniformly on bounded sets of $L^p([0,T],\RR)$.
\end{lemma}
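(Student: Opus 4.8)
The plan is to estimate the $\omega$-norm of $F_u(x)-F_u(y)$ directly and to show that, on any bounded set of controls, the resulting Lipschitz constant can be pushed below $1$ by taking $\omega$ large. Following the Note preceding the lemma I work with $m=1$; the general case follows by summing the $m$ contributions. Fix a bounded set $B \subset L^p\big([0,T],\RR\big)$, say $\|u\|_p \le r$ for all $u \in B$, and let $L$ denote the common global Lipschitz rate of $f$. Since $[0,T]$ is compact and $L \in L^\infty_{\rm loc}\big(\RR_0^+,\RR_0^+\big)$, the constant $\Lambda := \|L\|_{L^\infty([0,T])}$ is finite; this is the only place the $L^\infty_{\rm loc}$ hypothesis on the rate enters.

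First I would subtract the two integral representations in \eqref{eq:integral-operator}. The free term $\re^{tA}\xi_0$ cancels, leaving
\[
  F_u(x)(t) - F_u(y)(t) = \int_0^t \re^{(t-s)A}\, u(s)\,\big[f(s,x(s)) - f(s,y(s))\big] \,\rd s .
\]
Taking norms, inserting the semigroup bound $\|\re^{(t-s)A}\| \le M\re^{\mu(t-s)}$ together with $\|f(s,x(s))-f(s,y(s))\| \le \Lambda\|x(s)-y(s)\|$, and then using $\|x(s)-y(s)\| \le \re^{\omega s}\|x-y\|_\omega$, I multiply through by $\re^{-\omega t}$ to obtain
\[
  \re^{-\omega t}\big\|F_u(x)(t)-F_u(y)(t)\big\| \le M\Lambda\,\|x-y\|_\omega \int_0^t \re^{(\mu-\omega)(t-s)}\,|u(s)| \,\rd s .
\]
The point of this arrangement is that the two exponentials collapse into the single kernel $\re^{(\mu-\omega)(t-s)}$, which decays in $t-s$ as soon as $\omega > \mu$.

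The decisive step is Hölder's inequality (with $\frac1p+\frac1q=1$) applied to the last integral, separating the $L^p$-control from the $L^q$-kernel:
\[
  \int_0^t \re^{(\mu-\omega)(t-s)}\,|u(s)| \,\rd s
  \le \Big(\int_0^t \re^{-q(\omega-\mu)(t-s)} \,\rd s\Big)^{1/q}\|u\|_p
  \le \frac{\|u\|_p}{\big(q(\omega-\mu)\big)^{1/q}} ,
\]
where the last bound comes from $\int_0^t \re^{-q(\omega-\mu)\tau}\,\rd\tau \le (q(\omega-\mu))^{-1}$ for $\omega>\mu$. Combining this with $\|u\|_p \le r$ and taking the maximum over $t \in [0,T]$ gives $\|F_u(x)-F_u(y)\|_\omega \le C\,\|x-y\|_\omega$ with $C = M\Lambda r\,\big(q(\omega-\mu)\big)^{-1/q}$, uniformly over $u \in B$. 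Since $C \to 0$ as $\omega \to \infty$, any $\omega > \mu + (M\Lambda r)^q/q$ forces $C<1$, which is the assertion.

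The step I expect to carry the whole argument — and the reason it is confined to $p>1$ — is the decay of the $L^q$-norm of the exponential kernel: the factor $\big(q(\omega-\mu)\big)^{-1/q}$ tends to $0$ as $\omega\to\infty$ precisely because $q<\infty$. For $p=1$ one has $q=\infty$, and the only available bound $\sup_{s\le t}\re^{(\mu-\omega)(t-s)}=1$ does not improve with $\omega$, so this contraction estimate necessarily breaks down; this is exactly why the case $p=1$ is handled separately via hidden contractions. Finally, I note that the admissible $\omega$ depends on the radius $r$ of $B$, which is harmless: Corollary~\ref{cor:main_2} only requires a strongly equivalent metric to be chosen for each bounded set of controls separately.
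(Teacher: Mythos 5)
Your proof is correct and follows essentially the same route as the paper: the weighted-norm estimate, Hölder's inequality with exponent $q$ on the exponential kernel, and the observation that the resulting constant $M\|L\|_\infty\|u\|_p\big(q(\omega-\mu)\big)^{-1/q}$ can be made less than $1$ by choosing $\omega$ large. The only cosmetic difference is that you collapse the exponentials into the single kernel $\re^{(\mu-\omega)(t-s)}$ before applying Hölder, whereas the paper keeps the factor $\re^{(\mu-\omega)t}$ outside the integral; the resulting bound is identical.
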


\begin{proof}
  For simplicity, let $B$ be the unit ball of $L^p([0,T],\RR)$. Then for $x,y \in C\big([0,T],X\big)$, $u \in B$, and
  $\omega \geq 0$ one has the estimate
  \begin{equation*}
    \begin{split}
      \big\|F(x,u) & - F(y,u)\big\|_{\omega}
      = \max_{t \in [0,T]} \re^{-\omega t} \bigg\| \int_0^t \re^{(t-s)A}u(s)\big(f(s,x(s))-f(s,y(s))\big) \,\rd s\bigg\|\\
      & \leq \max_{t \in [0,T]} \re^{-\omega t} \int_0^t |u(s)| \, \big\|\re^{(t-s)A}\big\| \, \big\|f(s,x(s))-f(s,y(s))\big\| \,\rd s\\
      & \leq \max_{t \in [0,T]} \re^{-\omega t}
      \int_0^t M \re^{\mu(t-s)} \re^{\omega s} \, |u(s)|  \, L(s) \, \re^{- \omega s} \, \big\|x(s) - y(s)\big\| \,\rd s\\
      & \leq \max_{t \in [0,T]}  M \|L\|_\infty \re^{(\mu -\omega) t} \int_0^t \re^{(\omega-\mu)s} |u(s)| \,\rd s \, \big\|x - y \big\|_\omega\\
      & \leq \max_{t \in [0,T]}  M \|L\|_\infty \re^{(\mu -\omega) t} \big\|u\big\|_p
      \Big(\int_0^t\re^{q(\omega-\mu)s} \,\rd s \Big)^{1/q}\, \big\|x - y \big\|_\omega\\
      & \leq \max_{t \in [0,T]}  M \|L\|_\infty \re^{(\mu -\omega) t} 
      \bigg(\Big[\frac{\re^{q(\omega-\mu)s}}{q(\omega-\mu)}\Big]_0^t \bigg)^{1/q}\, \big\|x - y \big\|_\omega\,,
    \end{split}
  \end{equation*}
  where $\|L\|_\infty$ denotes the $L^\infty$-norm of $L$ on $[0,T]$ and the second-to-last estimate
  exploits again H\"older's inequality. Finally, for $\omega > \mu$ we obtain
  \begin{equation*}
    \begin{split}
      \big\|F(x,u) - F(y,u)\big\|_{\omega}& \leq \max_{t \in [0,T]}  M \|L\|_\infty \re^{(\mu -\omega) t}
      \bigg( \frac{\re^{q(\omega-\mu)t}-1}{q(\omega-\mu)} \bigg)^{1/q}\, \big\|x - y \big\|_\omega\\
      & \leq \frac{M \|L\|_\infty }{\big(q(\omega-\mu)\big)^{1/q}}\, \big\|x - y \big\|_\omega\,.
    \end{split}
  \end{equation*}
  Thus, for $\omega > 0$ sufficiently large, we can achieve $\frac{M \|L\|_\infty }{(q(\omega-\mu))^{1/q}} < 1$.
  This completes the proof.
\end{proof}

\noindent
The following technical result provides the justification that, under Assumptions (A1) and (A2), one can
actually assume that all $f_i$ are globally Lipschitz in $\xi$ which in turn allows to apply Lemma \ref{lem3:app-contraction}. 

\begin{lemma}
  \label{lem:cutoff}
  Let $f:\RR_0^+ \times X \to X$ satisfy Assumptions (A1) and (A2) and let $p \geq 1$.
  Then for every bounded set $B \subset L^p\big([0,T],\RR\big)$ and every $\xi_0 \in X$ there exists
  $\hat{f}:\RR_0^+ \times X \to X$ which is globally Lipschitz in $\xi$ with $ L_{\rm loc}^\infty$-Lipschitz rate
  such that every mild solution of
  \begin{equation}
    \label{eq:f_without_cutoff}
    \dot{x}(t) = Ax(t) + u(s) f\big(t,x(t)\big)\,, \quad x(0) = \xi_0\,, \quad u \in B
  \end{equation}
  on $[0,T]$ is a mild solution of 
  \begin{equation}
    \label{eq:f_with_cutoff}
    \dot{x}(t) = Ax(t) + u(s) \hat{f}\big(t,x(t)\big)\,, \quad x(0) = \xi_0\,, \quad u \in B
  \end{equation}
  on $[0,T]$ and vice versa.
\end{lemma}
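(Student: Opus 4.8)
The plan is to exploit the a~priori boundedness of mild solutions forced by Assumption~(A2) and then to replace $f$ by a truncation that agrees with $f$ on the relevant ball but is globally Lipschitz on all of $X$. The single truncation $\hat f$ will depend on $B$ and $\xi_0$ only through the radius of that ball.

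First I would establish a uniform a~priori bound. Let $x$ be any mild solution of \eqref{eq:f_without_cutoff} for some $u \in B$. Starting from the defining integral equation and using $\|\re^{At}\| \le M\re^{\mu t}$ together with Assumption~(A2), one gets for $g(t) := \re^{-\mu t}\|x(t)\|$ an estimate of the form $g(t) \le M\|\xi_0\| + M\int_0^T |u(s)|\beta(s)\,\rd s + \int_0^t M|u(s)|\alpha(s)\,g(s)\,\rd s$, where I used $\re^{-\mu s} \le 1$. Since $\alpha,\beta \in L_{\rm loc}^\infty$ are bounded on the compact interval $[0,T]$, and since $L^p([0,T],\RR) \hookrightarrow L^1([0,T],\RR)$ continuously for $p \ge 1$ (H\"older's inequality, $\|u\|_1 \le T^{1/q}\|u\|_p$) so that $\sup_{u \in B}\|u\|_1 =: c_B < \infty$, Gronwall's inequality yields a constant $R \ge 0$—depending only on $B$, $\xi_0$, $T$, $M$, $\mu$, $\|\alpha\|_\infty$ and $\|\beta\|_\infty$—with $\|x(t)\| \le R$ for all $t \in [0,T]$ and \emph{all} $u \in B$. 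The uniformity over the whole set $B$ is the point that makes one $\hat f$ suffice.

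Next I would construct $\hat f$ by composing $f$ with a radial retraction. Let $\rho:X \to K_R(0)$ be given by $\rho(\xi) = \xi$ for $\|\xi\| \le R$ and $\rho(\xi) = R\,\xi/\|\xi\|$ otherwise, and set $\hat f(t,\xi) := f\big(t,\rho(\xi)\big)$. The elementary fact I would invoke is that $\rho$ is Lipschitz on all of $X$ with constant at most $2$; this bound is valid in an arbitrary normed space and is checked by distinguishing the cases $\|\xi\|,\|\eta\| \le R$, exactly one exceeding $R$, and both exceeding $R$. Because $\rho$ takes values in the bounded set $K_R(0)$, Assumption~(A1) applied to $C = K_R(0)$ supplies an $L_{\rm loc}^\infty$-rate $L$ with $\|\hat f(t,\xi) - \hat f(t,\eta)\| \le L(t)\|\rho(\xi) - \rho(\eta)\| \le 2L(t)\|\xi - \eta\|$, so $\hat f$ is globally Lipschitz in $\xi$ with $L_{\rm loc}^\infty$-rate $2L$. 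Moreover $\|\hat f(t,\xi)\| = \|f(t,\rho(\xi))\| \le \alpha(t)\|\rho(\xi)\| + \beta(t) \le \alpha(t)\|\xi\| + \beta(t)$, so $\hat f$ inherits Assumption~(A2) with the \emph{same} $\alpha,\beta$.

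Finally I would verify the equivalence of mild solutions, which now closes immediately in both directions. If $x$ solves \eqref{eq:f_without_cutoff}, then by Step~1 one has $\|x(t)\| \le R$, hence $\rho(x(t)) = x(t)$ and $f(t,x(t)) = \hat f(t,x(t))$ for all $t$, so $x$ solves \eqref{eq:f_with_cutoff}. Conversely, since $\hat f$ satisfies Assumption~(A2) with the same $\alpha,\beta$, the Gronwall argument of Step~1 applies verbatim to any mild solution of \eqref{eq:f_with_cutoff} and again gives $\|x(t)\| \le R$; thus $\rho(x(t)) = x(t)$ and $\hat f(t,x(t)) = f(t,x(t))$, so $x$ solves \eqref{eq:f_without_cutoff}. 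The hard part will be obtaining the a~priori bound \emph{uniformly} over the bounded set $B$ (rather than for a fixed control), and, as a secondary technical point, controlling the Lipschitz constant of the radial retraction in a general—possibly non-Hilbert—Banach space; once both are in hand, the truncation and the equivalence of solutions are routine.
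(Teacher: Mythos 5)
Your proposal is correct and follows essentially the same route as the paper: a Gronwall-based a priori bound that is uniform over the bounded set $B$ (via the embedding $L^p \hookrightarrow L^1$ on $[0,T]$), followed by a truncation of $f$ outside the resulting ball, with the converse direction handled by observing that the truncated field still satisfies Assumption (A2) with the same $\alpha,\beta$. The only difference is cosmetic: you implement the truncation by composing $f$ with the ($2$-Lipschitz) radial retraction onto $K_R(0)$, whereas the paper multiplies $f$ by a scalar Lipschitz cut-off centered at $\xi_0$; both yield the required globally Lipschitz $\hat f$ with $L^\infty_{\rm loc}$-rate.
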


\begin{proof}
  For $n \in \NN$ let $\rho_n: X \to [0,1]$ be a globally Lipschitz cut-off function which is equal to $1$ for
  $\|\eta\| \leq n$ and zero for $\|\eta\| \geq n+1$, for instance
  \begin{equation*}
    \rho_n(\eta) :=
    \begin{cases}
      1 & \text{for $\|\eta\| \leq n$,}\\
      0 & \text{for $\|\eta\| \geq n+1$,}\\
      n+1 - \|x\| & \text{for $n \leq \|\eta\| \leq n+1$.}
    \end{cases}
  \end{equation*}
  The idea is, as $t \geq 0$ is restricted to the compact interval $[0,T]$, to provide a priori estimates (via
  Gronwall's lemma) for the solutions of \eqref{eq:f_without_cutoff} and \eqref{eq:f_with_cutoff} such that
  $\hat{f}$ can be defined as $\hat{f}(t,\eta) := \rho_n(\eta-\xi_0) f(t,\eta)$ for some appropriate $n \in \NN$.
  In other words we want to choose $n \in \NN$ such that the difference between $f$ and $\hat f$ occurs outside
  of an appropriate ball $B_n(\xi_0)$ which contains all mild solutions of the initial value
  problems \eqref{eq:f_without_cutoff} and \eqref{eq:f_with_cutoff}.

  \medskip
  \noindent
  To this end we assume that $x:[0,T] \to X$ is a mild solution of \eqref{eq:f_without_cutoff} and define
  $\varphi(t) := \re^{-\mu t}\|x(t) - \re^{tA}\xi_0\|$. Then one has
  \begin{equation*}
    \begin{split}
      \varphi(t) & = \re^{-\mu t}\bigg\|\int_0^t \re^{(t-s)A}u(s) f\big(s,x(s)\big) \,\rd s\bigg\|
      \leq \int_0^t M \re^{-\mu s}|u(s)| \Big(\alpha(s) \|x(s)\|+ \beta(s)\Big)  \,\rd s \\
      & \leq M \int_0^t |u(s)| \Big(\alpha(s) \re^{- \mu s}\big\|x(s) - \re^{sA}\xi_0 + \re^{sA}\xi_0\big\|+ \re^{-\mu s}\beta(s)\Big)  \,\rd s \\
      & \leq M \int_0^t |u(s)| \Big(\alpha(s) \varphi(s) + \alpha(s) \re^{-\mu s}\big\|\re^{sA}\xi_0\big\|+ \re^{-\mu s}\beta(s)\Big)  \,\rd s \\
      & \leq M \int_0^t \alpha(s) |u(s)| \varphi(s) \,\rd s
      + M \int_0^t |u(s)| \Big(\alpha(s)  \re^{-\mu s}\|\re^{sA}\xi_0\| + \re^{-\mu s}\beta(s)\Big) \,\rd s \\
      & \leq M \|\alpha\|_\infty \int_0^t |u(s)| \varphi(s) \,\rd s + M C_p \|u\|_p \big(M\|\alpha\|_\infty  \|\xi_0\| + \|\beta\|_\infty \big)\,,
    \end{split}   
  \end{equation*}
where the constant $C_p := T^{(p-1)/p}$ results from H\"older's inequality
.
  A straightforward application of Gronwall's lemma \cite[App.~C]{Sontag98} yields
  \begin{equation*}
    \begin{split}
      \varphi(t) \leq M C_p \|u\|_p \big(M \|\alpha\|_\infty  \|\xi_0\| + \|b\|_\infty \big)\re^{M \|\alpha\|_\infty \int_0^t |u(s)| \,\rd s}
    \end{split}   
  \end{equation*}
  and thus
  \begin{equation*}
    \begin{split}
      \big\|x(t) & - \xi_0\big\| \leq \re^{\mu t} \varphi(t) + \|\re^{tA}\xi_0 - \xi_0\|\\
      & \leq  M\re^{\mu t} C_p \|u\|_p \big(M \|\alpha\|_\infty \|\xi_0\| + \|\beta\|_\infty \big) \re^{M \|\alpha\|_\infty \int_0^t |u(s)| \,\rd s}
      + (M \re^{\mu t}+1)\|\xi_0\|\\
      & \leq M\re^{\mu T} \Big( C_p K \big(M \|\alpha\|_\infty  \|\xi_0\| + \|\beta\|_\infty \big)\re^{M \|\alpha\|_\infty C_p K} + 2\|\xi_0\|\Big)
      =: R\,,
    \end{split}   
  \end{equation*}
  where the last estimate exploits the assumption that $B \subset L^p\big([0,T],\RR\big)$ is bounded, i.e.~$\|u\|_p \leq K$
  for some constant $K \geq 0$. Next let us choose $N \in \NN$ with $N > R$ and define $\hat{f}(t,\eta) := \rho_N(\eta-\xi_0) f(t,\eta)$.
  Then, by the above estimate, $x(t)$ is also a mild solution of \eqref{eq:f_with_cutoff}. Conversely, since $\hat{f}$
  also satisfies \eqref{eq:linearly-bounded} we obtain the same a priori estimate for any solution $\hat{x}(t)$ of
  \eqref{eq:f_with_cutoff}, and thus $\hat{x}(t)$ is also a solution of \eqref{eq:f_without_cutoff}.

  \medskip
  \noindent
  Finally, global Lipschitz continuity of $\hat{f}$ results from the computation
  \begin{equation*}
    \begin{split}
       \big\|\hat{f}&(t,\eta) - \hat{f}(t,\zeta)\big\| \\
      &\leq
      \big|\rho_N(\eta-\xi_0)\big|\,\big\|f(t,\eta) - f(t,\zeta)\big\| + \big|\rho_N(\eta-\xi_0) - \rho_N(\zeta-\xi_0)\big|\,\big\|f(t,\zeta)\big\|\\[2mm]
      & \leq \big|\rho_N(\eta-\xi_0)\big|\,L(t)\|\eta - \zeta\| + \big|\rho_N(\eta-\xi_0) - \rho_N(\zeta-\xi_0)\big| \big(\alpha(t) \|\zeta\|+\beta(t)\big)\\[2mm]
& \leq \big|\rho_N(\eta-\xi_0)\big|\,L(t)\|\eta - \zeta\| + \big(\alpha(t) \|\zeta\|+\beta(t)\big)\,\|\eta - \zeta\|\\[2mm]
      & \leq
      \begin{cases}
        0 &\|\eta-\xi_0\| ,\|\zeta-\xi_0\| \geq N+1\,,\\[2mm]
        \big(L(t)+ \alpha(t) \big(N+1+\|\xi_0\|\big) + \beta(t) \big)\|\eta - \zeta\| &\|\eta-\xi_0\| ,\|\zeta-\xi_0\| < N+1\,,\\[2mm]
        \big(\alpha(t) \big(N+1+\|\xi_0\|\big) + \beta(t)\big)\|\eta - \zeta\| & \|\zeta-\xi_0\| < N+1 \leq \|\eta-\xi_0\|\,, 
      \end{cases}
    \end{split}   
  \end{equation*}
  where we used the fact that $\rho_N$ has Lipschitz rate $L=1$. Interchanging the role of $\eta$ and $\zeta$
  yields the corresponding estimate for $\|\eta-\xi_0\| < N+1 \leq \|\zeta-\xi_0\|$.
\end{proof}

\begin{proof}[Proof of Theorem \ref{thm:app_p>1}]
  Let $T \geq 0$ and $p > 1$ be fixed in the following. Obviously, if $x:[0,T] \to X$ is a mild solution of
  \eqref{eq:unique_solution} to the control $u \in L^p\big([0,T],\RR^m\big)$
  it is a fixed point of the integral operator \eqref{eq:integral-operator}
  \begin{equation*}
    F: C\big([0,T], X\big) \times L^p\big([0,T],\RR^m\big) \to C\big([0,T], X\big)
  \end{equation*}
  given by
  \begin{equation*}
    F(x,u)(t) = \re^{tA}\xi_0 + \sum_{i=1}^m\int_0^t \re^{(t-s)A}u_i(s)f_i\big(s,x(s)\big)\,\rd s\,.
  \end{equation*}
  Therefore, our aim is to apply Corollary \ref{cor:main_2} to $F$. To this end, we first restrict
  $F$ to $C\big([0,T], X\big) \times K_r(0)$, where $K_r(0)$ denotes an arbitrary closed
  ball in $L^p\big([0,T],\RR^m\big)$. 
  Consequently, due to Lemma \ref{lem:cutoff} we can further assume that all $f_i:\RR_0^+ \times X \to X$
  are globally Lipschitz in $\xi$ with $L^\infty_{\rm loc}$-Lipschitz rate. 

  \medskip
  \noindent
  Hence condition (a) of Corollary \ref{cor:main_2} follows from Lemma \ref{lem3:app-contraction}; conditions (b)
  and (d) are implied by Lemma \ref{lem1:app-continuity} and condition (c) by Lemma \ref{lem1:app-compactness}.
  Thus the fixed point map $\Phi:u \mapsto x(\cdot,\xi,u)$ is compact and locally Lipschitz continuous on $K_r(0)$. 
  Since $r > 0$ can be chosen arbitrarily the desired result follows.
\end{proof}



\subsection{The case $p = 1$}
\label{subsec:application_p=1}

\begin{theorem}
  \label{thm:app_p=1}
  Let $X$ be a Banach space and let $A$ be the infinitesimal generator of a $C^0$-semigroup $\big(\re^{tA}\big)_{t \geq 0}$
  of bounded operators on $X$. Moreover, let $f_i: X \to X$, $i = 1, \dots, m$ be continuous vector fields which satisfy
  Assumptions (A1) and (A2).
  Then for all $\xi_0 \in X$ the reachable set $\cR^1(\xi_0)$ of
  \begin{equation*}
    \dot{x}(t) = Ax(t) + \sum_{i=1}^m u_i(t) f_i(t,x(t))\,, \quad x(0) = \xi_0\,,
    \quad u_i \in L^1_{\rm loc}\big([0,\infty), \RR\big)
  \end{equation*}
  can be written as a countable union of relatively compact sets and therefore has no interior points if $X$
  is infinite-dimensional.
\end{theorem}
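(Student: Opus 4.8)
The plan is to fix $T\ge 0$ and a radius $r>0$ and to show that the truncated reachable set $\cR^{1,r}_{\le T}(\xi_0)=\{x(t,\xi_0,u):t\in[0,T],\ \|u\|_1\le r\}$ is relatively compact in $X$. The full statement then follows verbatim as in the proof of Corollary \ref{cor:app_p>1}: writing $\cR^1(\xi_0)=\bigcup_{T\in\NN}\bigcup_{n\in\NN}\cR^{1,n}_{\le T}(\xi_0)$ exhibits $\cR^1(\xi_0)$ as a countable union of relatively compact sets, and since relatively compact sets are nowhere dense in an infinite-dimensional Banach space, Baire's theorem shows it has no interior points. To reach relative compactness of $\cR^{1,r}_{\le T}(\xi_0)$ I would apply Corollary \ref{cor:main_3a} to the integral operator~\eqref{eq:integral-operator} restricted to $B:=K_r(0)\subset L^1([0,T],\RR^m)$, taking $C([0,T],X)$ as the state space and, as target $Z$, the complete metric space $\cK(X)$ of nonempty compact subsets of $X$ under the Hausdorff metric $d_H$. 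The decisive choice is $G:C([0,T],X)\to\cK(X)$, $G(x):=x([0,T])$, the trajectory-image map, for which $\bigcup_{u\in B}G(\Phi(u))=\cR^{1,r}_{\le T}(\xi_0)$; thus compactness of $G\circ\Phi$ is precisely the goal.

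First I would dispatch the routine hypotheses. By Lemma \ref{lem:cutoff} (valid for $p\ge 1$) I may replace the $f_i$ by globally Lipschitz modifications without altering the mild solutions for $u\in B$, so assume each $f_i$ is globally Lipschitz in $\xi$ with $L^\infty_{\rm loc}$-rate. Continuity of $F_x$ (hypothesis (b)) is immediate from Lemma \ref{lem1:app-continuity}, and $G$ is $1$-Lipschitz (hypothesis (c)) since $d_H(x([0,T]),y([0,T]))\le\|x-y\|_\infty$. The contraction hypothesis is the delicate point: at $p=1$ the $\omega$-weight of Lemma \ref{lem3:app-contraction} no longer forces a contraction and, because controls may concentrate, no single strongly equivalent metric need turn every $F_u$, $u\in B$, into a contraction. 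I would instead use the hidden-contraction structure directly. The nested Picard estimate gives, for each $u\in B$, $\|F_u^n x-F_u^n y\|_\infty\le\frac{\Lambda^n}{n!}\|x-y\|_\infty$ with $\Lambda:=M\re^{\mu T}\|L\|_\infty r$ uniform over $u\in B$; hence each $F_u$ is a hidden contraction, so $\Phi(u)$ is well defined, and $F_u^N(x_0)\to\Phi(u)$ uniformly in $u\in B$ as $N\to\infty$, because the tails of $\sum_k\Lambda^k/k!$ vanish. This uniform convergence is exactly what the compactness part of the proof of Corollary \ref{cor:main_3a} requires, the factorial tail replacing the geometric tail $C^n$ there.

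The heart of the matter is hypothesis (d): if $G(S)$ is relatively compact in $\cK(X)$ and $B$ is bounded, then $G(F(S\times B))$ is relatively compact. I would first record the equivalence that $G(S)$ is relatively compact in $(\cK(X),d_H)$ iff ${\rm Ev}(S):=\bigcup_{x\in S}x([0,T])$ is relatively compact in $X$; the forward direction is the covering argument in the proof of Lemma \ref{lem2:app-compactness}, the reverse follows since the compact subsets of the compact set $\overline{{\rm Ev}(S)}$ form a $d_H$-compact hyperspace. Hence it suffices to show ${\rm Ev}(F(S\times B))=\{F(x,u)(t):x\in S,u\in B,t\in[0,T]\}$ is relatively compact. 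Put $\cV:=f([0,T]\times\overline{{\rm Ev}(S)})$, compact by continuity of $f$, and $\cW:=\{\re^{rA}v:r\in[0,T],v\in\cV\}$, compact by joint continuity of the semigroup action on $[0,T]\times X$. The integral part of $F(x,u)(t)$ is $\int_0^t u(s)g(s)\,\rd s$ with $g(s)=\re^{(t-s)A}f(s,x(s))\in\cW$; viewing this Bochner integral as $\|u\|_1$ times the barycenter of the essential range under $|u|\,\rd s/\|u\|_1$ places it in $r\cdot\overline{\conv}\big(\cW\cup(-\cW)\cup\{0\}\big)$, which is compact because the closed convex hull of a compact set in a Banach space is compact. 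Adding the compact set $\{\re^{tA}\xi_0:t\in[0,T]\}$ confines ${\rm Ev}(F(S\times B))$ to a compact set, giving (d).

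With these ingredients the proof of Corollary \ref{cor:main_3a} runs unchanged: an induction using (d) makes $G(W_N(x_0))$ relatively compact in $\cK(X)$, so $\{G(F_u^N(x_0)):u\in B\}$ admits a finite $d_H$-net, and the uniform factorial tail together with the $1$-Lipschitz bound for $G$ upgrades this to a finite net for $\{G(\Phi(u)):u\in B\}$; by the $\cK(X)$--$X$ equivalence its union $\cR^{1,r}_{\le T}(\xi_0)$ is relatively compact in $X$. The main obstacle is exactly hypothesis (d). At $p=1$ the operator $F_x$ itself is no longer compact, since the equicontinuity estimate of Lemma \ref{lem1:app-compactness} rested on the H\"older factor $\delta^{1/q}$, which degenerates as $q\to\infty$; one therefore cannot route through Theorem \ref{thm:app_p>1}. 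The detour through $G$ sacrifices equicontinuity of the trajectories and asks only that their values lie in a compact set, and that compactness is supplied not by Arzel\`{a}--Ascoli but by the Banach-space fact that closed convex hulls of compact sets are compact.
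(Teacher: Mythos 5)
Your proposal is correct and follows the paper's overall architecture -- the same reduction to $\cR^{1,r}_{\leq T}(\xi_0)$ plus Baire, the cut-off via Lemma \ref{lem:cutoff}, and above all the same key device of applying Corollary \ref{cor:main_3a} with the trajectory-image map $G=\operatorname{Im}$ into the hyperspace $(\cC(X),d_H)$ together with the equivalence of Lemma \ref{lem1:compactness_p=1} -- but it diverges from the paper at two technical junctures. For the contraction hypothesis, the paper converts the hidden contraction of Lemma \ref{lem5:app-contraction} into a genuine uniform contraction with respect to a strongly equivalent metric via Lemma \ref{lem:equivalent-metric} and then invokes Corollary \ref{cor:main_3a} as stated, whereas you keep the original norm and re-run the tail estimate with the factorial bound $\Lambda^n/n!$ in place of the geometric one; both are valid, the paper's route keeps the corollary usable as a black box while yours avoids introducing the auxiliary metric (you should still note, as the paper implicitly does, that $d(F_u(x_0),x_0)$ is bounded uniformly over $u\in B$, which here follows from Assumption (A2)). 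For the decisive hypothesis (d), the paper's Proposition \ref{prop:compactness_p=1} approximates $(t,\xi)\mapsto \re^{tA}\xi$ on $[0,T]\times K$ by a piecewise constant map (Lemma \ref{lem3:compactness_p=1}) so that the integral term lands $\varepsilon$-close to a compact convex set spanned by finitely many vectors; you instead observe that the integrand takes values in the compact set $\cW=\{\re^{rA}v: r\in[0,T],\, v\in\cV\}$ and that the weighted integral is $\|u\|_1$ times a barycenter, hence lies in $r\cdot\overline{\conv}\big(\cW\cup(-\cW)\cup\{0\}\big)$, compact by the theorem on closed convex hulls of compact sets in Banach spaces. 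Your argument is shorter and more conceptual; the paper's is more elementary and self-contained, essentially constructing the finite $\varepsilon$-net by hand that the convex-hull theorem would otherwise supply. Both establish exactly the implication required by Corollary \ref{cor:main_3a}(d), so the proof goes through.
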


\begin{remark}
  \label{rem:p=1}
  Note that Lemma \ref{lem1:app-compactness} and Lemma \ref{lem3:app-contraction}---as used in the proof of Theorem
  \ref{thm:app_p>1}---explicitly require $p > 1$. Thus we have to fill these gaps in order to tackle the case $p=1$:
  Lemma \ref{lem3:app-contraction} can be readily replaced by Lemma \ref{lem5:app-contraction} below. However, Lemma
  \ref{lem1:app-compactness} is more delicate---actually, it cannot be repaired because the integral operator
  \eqref{eq:integral-operator} is in general not compact for $p=1$. A counter-example is provided in Appendix
  \ref{sec:counter-example}. However, compactness of the fixed point map $\Phi:u \mapsto x(\cdot,\xi_0,u)$ is
  not necessary for relative compactness of the reachable set $\cR^{1,r}_{\leq T}(\xi_0)$. Therefore, the goal is to
  prove relative compactness of $R^{1,r}_{\leq T}(\xi_0)$ directly via Corollary \ref{cor:main_3a}. Our strategy is
  closely related to the approach presented in \cite{BCC2019a}.
\end{remark}

\begin{lemma}
  \label{lem5:app-contraction}
  Let $f_i: \RR_0^+ \times X \to X$, $i = 1,\dots, m$ be globally Lipschitz in $\xi$ with
  $L^\infty_{\rm loc}$-Lipschitz rate. The iterated integral operator $F_u^n: C\big([0,T],X\big) \to C\big([0,T],X\big)$
  given by \eqref{eq:integral-operator} satisfies the estimate
  \begin{equation}
    \label{eq:app-contraction_p=1}
    \|F^n_u(x)(t) - F^n_u(y)(t)\|
    \leq \frac{M^n \re^{\mu n T} \|L\|^n_\infty \Big(\int_0^{t} v(s) \,\rd s\Big)^n \big\|x -y \big\|_\infty}{n!}\,.
  \end{equation}
  with $\|L\|_\infty := \max_{i = 1, \dots, m} \|L_i\|_\infty$ and $v(s) := \sum_{i=1}^m|u_i(s)|$. Moreover, 
  \begin{equation*}
    F: C\big([0,T],X\big) \times L^1\big([0,T],\RR^m\big) \to C\big([0,T],X\big)
  \end{equation*}
  is an eventual contraction in $x$ uniformly on bounded sets of $L^1\big([0,T],\RR^m\big)$.
\end{lemma}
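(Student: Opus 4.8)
The plan is to establish the pointwise estimate \eqref{eq:app-contraction_p=1} by induction on $n$, and then to read off the hidden-contraction property from the decay furnished by the factorial $n!$ in the denominator. Following the convention of the preceding auxiliary lemmas, I treat the case $m=1$ and write $v(s)=|u(s)|$; the general case follows verbatim once one sums the Lipschitz estimates over $i$ and uses $v(s)=\sum_{i=1}^m|u_i(s)|$ together with $\|L\|_\infty=\max_i\|L_i\|_\infty$.

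For the base case $n=1$ I subtract the two integral representations \eqref{eq:integral-operator}, note that the initial-value term $\re^{tA}\xi_0$ cancels, and estimate under the integral sign using $\|\re^{(t-s)A}\|\le M\re^{\mu(t-s)}\le M\re^{\mu T}$ (valid since $0\le t-s\le T$ and $\mu\ge0$) together with the global Lipschitz bound $\|f(s,x(s))-f(s,y(s))\|\le L(s)\|x(s)-y(s)\|\le\|L\|_\infty\|x-y\|_\infty$. This yields exactly \eqref{eq:app-contraction_p=1} for $n=1$. For the inductive step I write $F_u^{n+1}=F_u\circ F_u^{n}$, so that again the $\re^{tA}\xi_0$-terms cancel and
\begin{equation*}
\|F_u^{n+1}(x)(t)-F_u^{n+1}(y)(t)\|\le M\re^{\mu T}\|L\|_\infty\int_0^t v(s)\,\|F_u^{n}(x)(s)-F_u^{n}(y)(s)\|\,\rd s.
\end{equation*}
Inserting the inductive hypothesis reduces everything to a single scalar integral.

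The crux of the proof is the elementary identity that produces the factorial. Writing $V(t):=\int_0^t v(s)\,\rd s$, so that $V'=v$ almost everywhere, one has
\begin{equation*}
\int_0^t v(s)\,V(s)^n\,\rd s=\int_0^t V'(s)\,V(s)^n\,\rd s=\frac{V(t)^{n+1}}{n+1},
\end{equation*}
which turns the $n!$ of the hypothesis into $(n+1)!$ and advances the induction. This is the standard Picard--Lindel\"{o}f device, here carried out with respect to the time-change measure $v(s)\,\rd s$ rather than $\rd s$; since $u\in L^1$ only this reparametrised integral---and not $\int_0^t|u(s)|^q\,\rd s$---is finite, which is precisely why H\"{o}lder's inequality (used in Lemma \ref{lem3:app-contraction} for $p>1$) is neither available nor needed here.

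Finally, taking the supremum over $t\in[0,T]$ in \eqref{eq:app-contraction_p=1} gives
\begin{equation*}
\|F_u^{n}(x)-F_u^{n}(y)\|_\infty\le\frac{\big(M\re^{\mu T}\|L\|_\infty\,\|v\|_1\big)^n}{n!}\,\|x-y\|_\infty,
\end{equation*}
with $\|v\|_1=\int_0^T v(s)\,\rd s$. On a bounded set $B\subset L^1\big([0,T],\RR^m\big)$ the quantity $\|v\|_1$ is bounded, say $\|v\|_1\le r$, so the prefactor is at most $c^n/n!$ with $c:=M\re^{\mu T}\|L\|_\infty r$ independent of $u\in B$. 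Since $c^n/n!\to0$, there is $N\in\NN$ (depending only on $B$) with $c^N/N!<1$; for this $N$ the map $F_u^{N}$ is a contraction on $C\big([0,T],X\big)$ simultaneously for all $u\in B$, which is exactly the definition of a hidden contraction in $x$ uniform on bounded sets of $L^1$. No step presents a genuine obstacle; the only point requiring care is the factorial bookkeeping in the inductive reparametrisation, i.e.\ recognising that integrating against $v\,\rd s$ rather than $\rd s$ is what makes the argument close for merely integrable controls.
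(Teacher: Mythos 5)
Your proof is correct and follows essentially the same route as the paper: induction on $n$ with the base case from the semigroup bound and the global Lipschitz estimate, the factorial produced by the identity $\int_0^t v(s)\bigl(\int_0^s v(r)\,\rd r\bigr)^n\rd s=\tfrac{1}{n+1}\bigl(\int_0^t v(s)\,\rd s\bigr)^{n+1}$ (which the paper obtains by integration by parts and you by the substitution $V'=v$ a.e.\ --- the same device), and the hidden-contraction conclusion from $c^N/N!<1$ uniformly over a bounded ball in $L^1$. No gaps.
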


\begin{proof}
  Let $x,y \in C\big([0,T],X\big)$, $u \in L^1\big([0,T],\RR\big)$ and suppose $m = 1$. Then we obtain
  \begin{equation*}
    \begin{split}
      \big\|F_u(x)(t) - F_u(y)(t)\big\| &=  \bigg\| \int_0^t \re^{(t-s)A}u(s)\big(f(s,x(s)) - f(s,y(s))\big) \,\rd s\bigg\|\\
      &  \leq \int_0^t M\re^{\mu(t-s)}|u(s)| L(s) \big\|x(s) - y(s)\big\| \,\rd s\\
      &  \leq M \re^{\mu T} \|L\|_\infty \int_0^t |u(s)| \,\rd s \, \big\|x - y\big\|_\infty 
    \end{split}
  \end{equation*}
  and thus \eqref{eq:app-contraction_p=1} holds for $n=1$. Next, assume that \eqref{eq:app-contraction_p=1} is satisfied
  for some $n \in \NN$. We conclude
  \begin{equation*}
    \begin{split} 
       \big\|F^{n+1}_u(x&)(t) - F^{n+1}_u(y)(t)\big\| = \big\|F_u\big(F^n_u(x)\big)(t) - F_u\big(F^n_u(y)\big)(t)\big\| \\
      & \quad = \bigg\| \int_0^t \re^{(t-s)A}u(s)\Big(f\big(s,F^n_u(x)(s)\big) - f\big(s,F^n_u(y)(s)\big)\Big) \,\rd s\bigg\|\\
      & \quad \leq \int_0^t M \re^{\mu (t-s)} |u(s)| \, L(s) \big\|F^n(x,u)(s) - F^n(y,u)(s)\big\| \,\rd s\\
      & \quad \leq \frac{M^{n+1} \re^{\mu (n+1) T} \|L\|^{n+1}_\infty \|x-y\|_\infty}{n!}
      \int_0^t |u(s)| \, \Big(\int_0^{s}|u(r)| \rd r\Big)^n \,\rd s\\
      & \quad = \frac{M^{n+1} \re^{\mu (n+1) T} \|L\|^{n+1}_\infty \|x-y\|_\infty}{(n+1)!} \Big(\int_0^{t}|u(s)| \,\rd s\Big)^{n+1}\,, 
    \end{split}
  \end{equation*}
  where the last step follows via integration-by-parts. Next, let $m \geq 1$ and choose $\|u\|_1 := \sum_{i=1}^m \|u_i\|_1$
  as norm\footnote{Obviously, this norm on $L^1\big([0,T],\RR^m\big)$ is induced by the $1$-norm on $\RR^n$. Choosing
    another norm on $\RR^n$ would simply yield an equivalent norm on $L^1\big([0,T],\RR^m\big)$.}
  on $L^1\big([0,T],\RR^m\big)$.
  It follows
  \begin{equation*}
    \begin{split}
      \|F_u(x)(t) - F_u(y)(t) \big\|
      \leq M \re^{\mu T} \|L\|_\infty  \int_0^t \sum_{i=1}^m |u_i(s)| \,\rd s \big\|x - y\big\|_\infty
    \end{split}
  \end{equation*}
  with $\|L\|_\infty := \max_{i = 1, \dots, m} \|L_i\|_\infty$. Setting $v(s) := \sum_{i=1}^m |u_i(s)|$, we conclude by
  induction
  \begin{equation*}
    \begin{split} 
      \|F^{n}_u(x)(t) - F^{n}_u(y)(t) \big\|
      \leq \frac{M^{n} \re^{\mu n T} \|L\|^{n}_\infty \Big(\int_0^t v(s) \,\rd s\Big)^n}{n!} \|x-y\|_\infty\,.
    \end{split}
  \end{equation*}
  Finally, taking into account $\|v\|_1 = \|u\|_1$ we obtain
  \begin{equation*}
    \begin{split} 
      \|F^{n}_u(x) - F^{n}_u(y) \big\|_\infty
      \leq \frac{M^{n} \re^{\mu n T} \|L\|^{n}_\infty \|u\|^n_1}{n!} \|x-y\|_\infty
    \end{split}
  \end{equation*}
  and hence for bounded $B \subset L^1\big([0,T],\RR^m\big)$ we can choose $n \in \NN$ such that
  the estimate $\frac{M^{n} \re^{\mu n T} \|L\|^{n}_\infty \|u\|^n_1}{n!} < 1$ holds for all $u \in B$.
\end{proof}

\begin{remark}
  \label{rem:app-contraction}
  While the above estimation certainly applies to the case $p > 1$ as well, the $\omega$-norm approach
  of the previous subsection fails for $p = 1$. Nevertheless, we decided to use the $\omega$-norm technique
  for $p > 1$ in order to illustrate both methods. 
\end{remark}

Next we need an appropriate replacement of Lemma \ref{lem1:app-compactness}. We already know (cf.~Appendix
\ref{sec:counter-example}) that Lemma \ref{lem1:app-compactness} cannot be ``repaired'' simply by an enhanced
proof technique because the integral operator $F_x: L^1\big([0,T],\RR^m\big) \to C\big([0,T],X\big)$ is in
general not compact. Therefore, the idea is to prove relative compactness of the reachable set $\cR_{\leq T}^{1,r}(\xi_0)$
directly via Corollary \ref{cor:main_3a}. To this end, we define an ``image map''
\begin{equation*}
  \operatorname{Im}: C\big([0,T],X\big) \to \cC(X)
\end{equation*}
via
\begin{equation*}
  \operatorname{Im}(x) := x\big([0,T]\big)
\end{equation*}
where $\cC(X)$ denotes the set of all compact subsets of $X$ equipped with the Hausdorff metric
\begin{equation*}
  d_H(K,K') := \max\Big\{\max_{\xi \in K}\min_{\xi' \in K'}\|\xi-\xi'\|\,, \max_{\xi' \in K'}\min_{\xi \in K}\|\xi-\xi'\|\Big\} \,.
\end{equation*}
As $X$ is complete $\big(\cC(X), d_H\big)$ is complete as well, cf.~\cite[§33.IV, Thm.]{Kuratowski66}.
Moreover, one has the obvious estimate
\begin{equation}
  \label{eq:Lipschitz_G}
  d_H\big(\operatorname{Im}(x),\operatorname{Im}(y)\big) \leq  \|x-y\|_\infty
\end{equation}
for all $x,y\in C([0,T],X)$, i.e.~$\operatorname{Im}$ is Lipschitz continuous with Lipschitz rate $L = 1$.

\begin{proposition}
  \label{prop:compactness_p=1}
  Let $f_i: \RR_0^+ \times X \to X$, $i = 1,\dots, m$ be continuous and let
  $F: C\big([0,T],X\big) \times L^1\big([0,T],\RR\big) \to C\big([0,T],X\big)$ denote the integral operator
  given by \eqref{eq:integral-operator}. 
  Moreover, let $\operatorname{Im}$ be defined as above and set $Z := \big(\cC(X), d_H\big)$.
  Then for $S \subset C\big([0,T],X\big)$ and $B \subset L^1\big([0,T],\RR\big)$ one has the implication:
  \begin{equation*}
    \begin{split}
       \text{$\operatorname{Im}(S)$ relatively comp}&\text{act in $Z$ and $B$ bounded}\\
      &\Downarrow\\
      \text{$\operatorname{Im}\big(F(S \times B)\big)$ is }&\text{relatively compact in $Z$}
    \end{split}
  \end{equation*}
\end{proposition}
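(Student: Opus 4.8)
The plan is to push the statement from the Hausdorff space $Z=(\cC(X),d_H)$ down to the underlying Banach space $X$ by means of the evaluation set ${\rm Ev}(\cdot)$ of Lemma~\ref{lem2:app-compactness}; recall that this proposition is exactly the verification of hypothesis (d) of Corollary~\ref{cor:main_3a} for $G=\operatorname{Im}$. The decisive reduction is the elementary characterization that, for any family $\cA\subset\cC(X)$, the family $\cA$ is relatively compact in $Z$ if and only if the union $\bigcup_{K\in\cA}K\subset X$ is relatively compact. I would prove this by a double $\varepsilon$-net argument: given a finite $d_H$-net $K_1,\dots,K_N$ of $\cA$, the union of finite $\varepsilon$-nets of the compacta $K_i$ yields a $2\varepsilon$-net of $\bigcup_{K\in\cA}K$; conversely, a finite $\varepsilon$-net $\xi_1,\dots,\xi_M$ of $\bigcup_{K\in\cA}K$ produces a finite $d_H$-net consisting of the finitely many subsets $\{\xi_j:j\in J\}$, $J\subset\{1,\dots,M\}$. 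Since $\bigcup_{x\in S}\operatorname{Im}(x)={\rm Ev}(S)$, and likewise for $F(S\times B)$, the proposition becomes equivalent to the implication: if ${\rm Ev}(S)$ is relatively compact in $X$ and $B$ is bounded in $L^1$, then ${\rm Ev}\big(F(S\times B)\big)$ is relatively compact in $X$.

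For the latter I would argue directly that ${\rm Ev}(F(S\times B))$ is contained in a fixed compact subset of $X$. Writing $K:=\overline{{\rm Ev}(S)}$ (compact since $X$ is complete), continuity of $f$ makes $\tilde K:=f\big([0,T]\times K\big)$ compact, and joint continuity of $(\tau,\zeta)\mapsto\re^{\tau A}\zeta$ on $[0,T]\times X$ renders $W:=\{\re^{\tau A}\zeta:\tau\in[0,T],\ \zeta\in\tilde K\}$ compact as well. Now for $x\in S$, $u\in B$ with $\|u\|_1\le r$, and $t\in[0,T]$, the integrand $s\mapsto\re^{(t-s)A}f\big(s,x(s)\big)$ takes values in $W$; setting $w(s):=\operatorname{sign}\big(u(s)\big)\,\re^{(t-s)A}f\big(s,x(s)\big)\in W\cup(-W)=:W^{\pm}$ one obtains
\begin{equation*}
  \int_0^t\re^{(t-s)A}u(s)f\big(s,x(s)\big)\,\rd s=\int_0^t|u(s)|\,w(s)\,\rd s\,,
\end{equation*}
an integral of a $W^{\pm}$-valued function against the nonnegative measure $|u(s)|\,\rd s$ of total mass $\int_0^t|u(s)|\,\rd s\le r$. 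Hence this vector lies in $[0,r]\cdot\overline{\conv}(W^{\pm})$, which is compact by Mazur's theorem (the closed convex hull of the compact set $W^{\pm}$ is compact) together with continuity of scalar multiplication. Adding the compact set $\{\re^{tA}\xi_0:t\in[0,T]\}$, I conclude that ${\rm Ev}\big(F(S\times B)\big)\subset\{\re^{tA}\xi_0:t\in[0,T]\}+[0,r]\cdot\overline{\conv}(W^{\pm})$, a sum of two compacta and therefore compact.

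The step I expect to carry the real content is the barycenter estimate in the second paragraph: it uses only the bound $\int_0^t|u|\le r$ on the $L^1$-mass of the control, never H\"older's inequality or $p>1$. This is precisely what makes the argument survive for $p=1$, where the integral operator $F_x$ itself is no longer compact (cf.\ Appendix~\ref{sec:counter-example}) and the $\omega$-norm/equicontinuity route of Lemma~\ref{lem1:app-compactness} is unavailable. The reason one must descend to ${\rm Ev}$ rather than work in $C\big([0,T],X\big)$ is exactly that equicontinuity in $t$ fails; by contrast, relative compactness of the \emph{pointwise} values persists because the ``direction'' $\re^{(t-s)A}f\big(s,x(s)\big)$ stays in the compact set $W$ while only the scalar mass is controlled. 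The remaining routine points---Bochner integrability of the integrand, strong measurability of $w$, and that the barycenter of a probability measure lies in the closed convex hull of (the essential range of) its integrand---I would dispatch briefly.
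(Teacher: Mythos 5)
Your proposal is correct, and its core step takes a genuinely different route from the paper. The reduction from $Z=(\cC(X),d_H)$ to $X$ via the equivalence ``$\cA\subset\cC(X)$ relatively compact iff $\bigcup\cA\subset X$ relatively compact'' is precisely the paper's Lemma \ref{lem1:compactness_p=1} (your two-sided $\varepsilon$-net argument matches the one given there), and the identification $\bigcup_{x\in S}\operatorname{Im}(x)=\operatorname{Ev}(S)$ is the same. Where you diverge is in proving relative compactness of $\operatorname{Ev}\big(F(S\times B)\big)$: the paper does \emph{not} invoke Mazur's theorem, but instead approximates $\Gamma(t,\xi)=\re^{tA}\xi$ uniformly on $[0,T]\times\overline{f([0,T]\times\operatorname{Ev}(S))}$ by a map with \emph{finite} image (Lemma \ref{lem3:compactness_p=1}), so that the integral term lands, up to an $\varepsilon/2$ error in norm, in the finite-dimensional compact convex set $\big\{\sum_{i,j}\lambda_{ij}\xi_{ij}:\lambda_{ij}\in[-1,1]\big\}$, from which a finite $\varepsilon$-net is assembled by hand. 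Your argument replaces this approximation by the observation that the integrand's ``direction'' stays in the compact set $W^{\pm}$ while only the $L^1$-mass $\int_0^t|u|\le r$ is controlled, so the integral is a scaled barycenter and lies in $[0,r]\cdot\overline{\conv}(W^{\pm})$, compact by Mazur's theorem. This buys a shorter proof with no piecewise-constant approximation and no explicit net construction, and it yields the slightly stronger conclusion that $\operatorname{Ev}\big(F(S\times B)\big)$ is contained in a single fixed compact set; the price is reliance on Mazur's theorem and on the barycenter property of Bochner integrals, whereas the paper stays entirely within elementary total-boundedness arguments. If you write this up, make two small points explicit: fix a convention for $\operatorname{sign}(0)$ so that $w$ is everywhere defined (the set where $u=0$ is $|u|\,\rd s$-null, hence harmless), and note that $0\in\overline{\conv}(W^{\pm})$ by symmetry, so the degenerate case $\int_0^t|u|\,\rd s=0$ causes no trouble.
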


\noindent
To prove Proposition \ref{prop:compactness_p=1} the following auxiliary results are useful.

\begin{lemma}[Relative Compactness Criterion\footnote{This result should be well known but it was difficult to locate
    a suitable reference. One implication (``only-if''-part) can be found in \cite[§21.VIII, Thm.~2]{Kuratowski66}.}]
  \label{lem1:compactness_p=1}
  Let a subset $\cK \subset Z$ be given. Then $\cK$ is relatively compact if and only if $\bigcup \cK \subset X$ is
  relatively compact.
\end{lemma}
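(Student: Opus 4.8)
The plan is to run everything through the $\varepsilon$-net (total boundedness) characterization of relative compactness recalled in the preliminaries; this is legitimate because both $X$ and $Z=(\cC(X),d_H)$ are complete. Hence it suffices to prove the equivalence: for every $\varepsilon>0$ the family $\cK$ has a finite $\varepsilon$-net in $Z$, if and only if for every $\varepsilon>0$ the set $\bigcup\cK$ has a finite $\varepsilon$-net in $X$. Throughout I write $d(y,K):=\min_{\xi\in K}\|y-\xi\|$ for the (attained, since $K$ is compact) distance of a point $y\in X$ to a set $K\in\cC(X)$, and I use that the elements of $\cC(X)$ are nonempty compact sets so that $d_H$ is a genuine metric.

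First the ``only-if'' direction (the part attributed to Kuratowski). Assume $\cK$ is relatively compact and fix $\varepsilon>0$. Choose a finite $\frac{\varepsilon}{2}$-net $K_1,\dots,K_n\in\cK$ of $\cK$ in $Z$. Each $K_i$ is compact in $X$, hence totally bounded, so it admits a finite $\frac{\varepsilon}{2}$-net $\{x_{i,1},\dots,x_{i,m_i}\}\subset X$. I claim the finite set $P:=\{x_{i,j}\}$ is an $\varepsilon$-net of $\bigcup\cK$: given $x\in\bigcup\cK$ pick $K\in\cK$ with $x\in K$ and then $K_i$ with $d_H(K,K_i)<\frac{\varepsilon}{2}$; the first term of $d_H$ supplies $\xi'\in K_i$ with $\|x-\xi'\|<\frac{\varepsilon}{2}$, and some $x_{i,j}$ with $\|\xi'-x_{i,j}\|<\frac{\varepsilon}{2}$, so $\|x-x_{i,j}\|<\varepsilon$. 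Thus $\bigcup\cK$ is totally bounded, hence relatively compact.

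The ``if'' direction is the substantive one. Assume $\bigcup\cK$ is relatively compact and fix $\varepsilon>0$. Choose a finite $\frac{\varepsilon}{2}$-net $P=\{y_1,\dots,y_N\}\subset X$ of $\bigcup\cK$. The candidate $\varepsilon$-net for $\cK$ inside $Z$ will be the \emph{finite} collection of all nonempty subsets of $P$, each of which is a finite, hence compact, hence legitimate element of $\cC(X)$. Given $K\in\cK$, set $S:=\{y_i\in P:d(y_i,K)\le\frac{\varepsilon}{2}\}$ and estimate $d_H(K,S)$ from both sides. For the first term: any $\xi\in K\subset\bigcup\cK$ has some $y_i$ with $\|\xi-y_i\|<\frac{\varepsilon}{2}$, and then $d(y_i,K)\le\|y_i-\xi\|<\frac{\varepsilon}{2}$ forces $y_i\in S$, so $\min_{y\in S}\|\xi-y\|<\frac{\varepsilon}{2}$ (this also shows $S\neq\emptyset$). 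For the second term: every $y\in S$ satisfies $\min_{\xi\in K}\|y-\xi\|=d(y,K)\le\frac{\varepsilon}{2}$ by construction. Hence $d_H(K,S)\le\frac{\varepsilon}{2}<\varepsilon$, so the nonempty subsets of $P$ form a finite $\varepsilon$-net of $\cK$, and $\cK$ is relatively compact.

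I expect the main obstacle to be exactly the design of the net in the ``if'' direction: the naive attempt to approximate each $K$ by single points of a net does not produce elements of $Z$, and the right move is to pass to the power set of a fixed finite point-net of $\bigcup\cK$ and to select, for each $K$, the subset $S$ of those net points that are $\frac{\varepsilon}{2}$-close to $K$. Once $S$ is chosen, the two Hausdorff estimates are routine; the only points requiring care are that $S$ is nonempty and that the approximating family is finite (being contained in the fixed power set of $P$).
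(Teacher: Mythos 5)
Your proof is correct and follows essentially the same route as the paper: the ``only-if'' part refines a finite net of $\cK$ in $Z$ by finite nets of the individual compacta, and the ``if'' part approximates each $K\in\cK$ by the subset of a fixed finite point-net of $\bigcup\cK$ consisting of those net points close to $K$, so that the power set of the net serves as a finite net in $(\cC(X),d_H)$. You additionally write out the two Hausdorff-distance estimates that the paper leaves as an exercise and note explicitly that the approximating subset is nonempty, which is a worthwhile bit of care.
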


\begin{proof}
  ``$\Longrightarrow$'': Let $\varepsilon > 0$ and assume that $\cK \subset Z$ is relatively compact with respect to $d_H$.
  Hence there exists a finite $\frac{\varepsilon}{2}$-net $K_1, \dots, K_N$ of $\cK$. Moreover, since all $K_i$,
  $i = 1, \dots, N$ are compact their union is compact as well and hence we can choose a finite $\frac{\varepsilon}{2}$-net
  $\xi_1, \dots, \xi_M$ of it. Now we claim that $\xi_1, \dots, \xi_M$ is an $\varepsilon$-net of $\bigcup\cK$:
  Given $\xi\in\bigcup\cK$. One finds $K\in\cK$ such that $\xi\in K$. For this $K$, there exists $K_i$ such that
  $d_H(K,K_i)<\frac{\varepsilon}{2}$. This implies that one has $\eta\in K_i$ with $\|\xi-\eta\|<\frac{\varepsilon}{2}$.
  Finally, one can choose $\xi_j$ such that $\|\eta-\xi_j\|<\frac{\varepsilon}{2}$. This yields the desired estimate
  \begin{equation*}
    \|\xi - \xi_j\| \leq \|\xi - \eta\| + \|\eta - \xi_j\| < \frac{\varepsilon}{2} + \frac{\varepsilon}{2}
    = \varepsilon\,.
  \end{equation*}

  \noindent
  ``$\Longleftarrow$'': Again, let $\varepsilon > 0$ and assume that $\bigcup \cK \subset X$ is relatively compact.
  Therefore we can choose a finite $\varepsilon$-net $\xi_1, \dots, \xi_M$ of $\bigcup \cK $. Set $\cK_{0}$ be the power set
  of $K_0 := \{\xi_1, \dots, \xi_M\}$. Obviously, $\cK_0$ is a finite collection of compact subsets of $X$. We claim that
  $\cK_{0}$ is the desired $\varepsilon$-net of $\cK$. To see this let $K \in \cK$ be arbitrary and define
  \begin{equation*}
    K' := \{\xi \in K_0  :  K_\varepsilon(\xi) \cap K \neq \emptyset\} \in \cK_{0}\,.
  \end{equation*}
  Now it is a standard exercise (left to the reader) to show $d_H(K,K') \leq \varepsilon$ and we are done.
\end{proof}



\begin{proof}[Proof of Proposition \ref{prop:compactness_p=1}]
  Let $B:=K_1(0)\subset L^1([0,T],\mathbb R)$ and $S \subset  C\big([0,T],X\big)$ such that
  \begin{equation*}
    \operatorname{Im}(S) = \{x([0,T])  :  x \in S\} \subset Z
  \end{equation*}
  is relatively compact. Then, according to Lemma \ref{lem1:compactness_p=1},
  \begin{equation*}
    \operatorname{Ev(S)} = \{x(t)  :  x \in S\,, t \in [0,T]\}
  \end{equation*}
  is relatively compact in $X$ and thus continuity of $f$ implies that the closure of
  $f\big([0,T] \times \operatorname{Ev}(S)\big)$ coincides with
  $ f\big([0,T] \times \overline{\operatorname{Ev}(S)}\big)$ and is therefore compact in $X$.
  Thus we can apply Lemma \ref{lem3:compactness_p=1} to the continuous function $\Gamma:[0,T]\times X\to X$,
  $(t,\xi)\mapsto e^{At}\xi$ and the compact set $K := f\big([0,T] \times \overline{\operatorname{Ev}(S)}\big)$.
  This yields an approximation $\Gamma_{\varepsilon/2}$ such that (i) holds for $\frac{\varepsilon}{2}$. Thus
  for arbitrary $x \in S$ and $u \in B$ it follows 
  \begin{equation}
    \label{eq:epsilon-approximation}
    \begin{split}
      F(x,u)(t) & = \re^{tA}\xi_0 + \int_0^t u(s)\re^{(t-s)A}f\big(s,x(s)\big)\rd s\\
      & = \re^{tA}\xi_0 + \int_0^t u(s)\Gamma\big(t-s,f(s,x(s))\big)\rd s\\
      & = \re^{tA}\xi_0 + \int_0^t u(s)\Gamma_{\varepsilon/2}\big(t-s,f(s,x(s))\big)\rd s\\
      &\hphantom{= \re^{tA}\xi_0} + \int_0^t u(s) \Big(\Gamma\big(t-s,f(s,x(s))\big) - \Gamma_{\varepsilon/2}\big(t-s,f(s,x(s))\big)\Big)\rd s\,.
    \end{split}
  \end{equation}
  Note that integrability of $s \mapsto \Gamma_{\varepsilon/2}\big(t-s,f(s,x(s))\big)$ follows from its measurability
  and essential boundedness. Since continuity of $t \mapsto \re^{tA}\xi_0$ implies that $\{\re^{tA}\xi_0  :  t \in [0, T ]\}$
  is compact it suffices to focus on the second and third term of \eqref{eq:epsilon-approximation}. For the latter, we get
  \begin{equation*}
    \begin{split}
      & \bigg\|\int_0^t u(s) \Gamma\big(t-s,f(s,x(s))\big) \,\rd s
      - \int_0^t u(s) \Gamma_{\varepsilon/2}\big(t-s,f(s,x(s))\big) \,\rd s \bigg\|\\
      & \qquad
      \leq \int_0^t |u(s)| \Big\|\Gamma\big(t-s,f(s,x(s))\big) - \Gamma_{\varepsilon/2}\big(t-s,f(s,x(s))\big)\Big\| \,\rd s\\
      & \qquad < \frac{\varepsilon}{2} \int_0^t |u(s)|  \,\rd s \leq  \frac{\varepsilon}{2}\,.
    \end{split}
  \end{equation*}
  Hence we conclude
  \begin{equation}
    \label{eq:infinity-norm-estimate}
    \bigg\|\int_0^{(\cdot)} u(s) \Gamma\big(\,\cdot\, -s,f(s,x(s))\big) \,\rd s
    -  \int_0^{(\cdot)} u(s) \Gamma_\varepsilon\big(\,\cdot\, -s,f(s,x(s))\big) \,\rd s \bigg\|_\infty \leq \frac{\varepsilon}{2}\,.
  \end{equation}
  For the second term we get the following representation
  \begin{equation*}
    \begin{split}
       \int_0^t u(s)\Gamma_{\varepsilon/2}\big(t-s,f(s,x(s))\big)\rd s 
      &  = \sum_{i=1}^N\sum_{j=1}^M \int_0^t u(s) \chi_{\Delta_i}(t-s)\chi_{S_j}\big(f(s,x(s))\big) \xi_{ij} \,\rd s \\
      &  = \sum_{i=1}^N\sum_{j=1}^M \int\limits_{I_{ij}(t)}u(s) \,\rd s \, \xi_{ij}
    \end{split}
  \end{equation*}
  with $I_{ij}(t) := [0,t] \cap (t - \Delta_{i}) \cap (f \circ \big(\id \times \, x(\cdot))\big)^{-1}(S_j) $. This shows
  \begin{equation*}
    \begin{split}
       \int_0^t u(s)\Gamma_{\varepsilon/2}\big(t-s,f(s,x(s))\big)\rd s 
      &= \sum_{i=1}^N\sum_{j=1}^M \int\limits_{I_{ij}(t)}u(s) \,\rd s \, \xi_{ij} \\
      & \subset \bigg\{\sum_{i=1}^N\sum_{j=1}^M \lambda_{ij}\xi_{ij} \;\bigg|\;\lambda_{ij} \in [-1,1]\bigg\} =: C \subset X
    \end{split}
  \end{equation*}
  and hence the second term is contained in the compact convex subset $C$. Thus, one can choose a finite
  $\frac{\varepsilon}{2}$-net $y_1, \dots, y_L$ for $C$ which yields---due to \eqref{eq:infinity-norm-estimate}---a finite
  $\varepsilon$-net of
  \begin{equation*}
    \bigg\{ \int_0^t u(s)e^{(t-s)A}f\big(s,x(s)\big)\rd s \;:\; t \in [0,T]\,, x \in S\,,u\in B\bigg\}\,.
  \end{equation*}
  Since the sum of relatively compact subsets is again relatively compact
  we conclude that $\operatorname{Ev}\big(F(S \times B)\big)$ is relatively compact. A further application of Lemma
  \ref{lem1:compactness_p=1} proves relative compactness of $\operatorname{Im}\big(F(S \times B)\big) \subset Z$.
\end{proof}

\noindent
After these preliminaries we are well-prepared for proving the main result of this subsection.

\begin{proof}[Proof of Theorem \ref{thm:app_p=1}]
  As in the proof of Corollary \ref{cor:app_p>1}, we have to show that for fixed $T \geq 0$ and $n \in \NN$,
  the reachable set up to time $T$ under bounded $L^1$-controls, i.e.
  \begin{equation*}
    R_{\leq T}^{1,n}(\xi_0) := \{x(t,\xi_0,u)  :  t \in [0,T]\,, \|u\|_1 \leq n \}
  \end{equation*}
  is relatively compact. The rest follows again immediately from Baire's category theorem and the identity
  \begin{equation*}
    R^1(\xi_0) = \bigcup_{T \in \NN} \bigcup_{n \in \NN}  R_{\leq T}^{1,n}(\xi_0)\,.
  \end{equation*}
  Hence, let $T \geq 0$ be fixed and, for simplicity, let $m=1$. Moreover, by Lemma \ref{lem:cutoff}, we
  can assume without loss of generality that $f := f_1$ is globally Lipschitz in $\xi$ on bounded subsets of
  $L^1\big([0,T],\RR\big)$.

  \medskip
  \noindent
  Next, let $B := K_n(0) \subset L^1\big([0,T],\RR\big)$ denote the closed ball around the origin with radius $n \in \NN$.
  By Lemma \ref{lem:equivalent-metric} and \ref{lem5:app-contraction}, we can choose a metric $d'$ on $C\big([0,T],X\big)$
  which is strongly equivalent to the metric induced by the maximum-norm $\|\cdot\|_\infty$ such that the restricted integral
  operator $F: C\big([0,T],X\big) \times B \to C\big([0,T],X\big)$ constitutes a uniform contraction in $x$. Then
  Proposition \ref{prop:compactness_p=1} and Eq.~\eqref{eq:Lipschitz_G} allow us to apply Corollary \ref{cor:main_3a}
  to the restriction of $F$. Finally, Lemma \ref{lem1:compactness_p=1} yields the desired relative compactness of
  $\operatorname{Ev}\big(\Phi(B)\big) = \cR_{\leq T}^{1,n}(\xi_0)$. This concludes the proof.
\end{proof}


\appendix

\section{Counter-Example} 
\label{sec:counter-example}

Consider the scalar system
\begin{equation*}
  \dot{x}(t) = u(t)\,, \quad x(0) = 0 \quad \text{and} \quad u \in L^1([0,1],\RR)\,.
\end{equation*}
and let $\Phi: L^1([0,1],\RR) \to C\big([0,1],\RR\big)$ denote the corresponding fixed point map,
cf.~Theorem \ref{thm:app_p>1}. Then the image of the closed unit ball of $L^1([0,1],\RR)$ under $\Phi$
obviously contains the sequence $(x_n)_{n \in \NN}$ of continuous functions given by
\begin{equation*}
  x_n(t) = \begin{cases}
    nt & \text{for $t \in [0,\tfrac{1}{n}]$,}\\[2mm]
    1 & \text{for $t \in [\tfrac{1}{n},1]$,}
  \end{cases}
\end{equation*}
as $x_n = \Phi(u_n)$ with $u_n(t) := n$ for $t\in[0,\frac{1}{n}]$ and $u_n(t) := 0$ else. Evidently, the
sequence $(x_n)_{n \in \NN}$ is not equicontinuous (at $t=0$) so the image $\Phi\big(K_1(0)\big)$ is
not relatively compact due to Arzel\`{a}--Ascoli (alternatively, one may argue that $(x_n)_{n \in \NN}$
does not possess a convergent subsequence).


\section{A Technical Lemma}
\label{sec:proof-lem3:compactness_p=1}

\begin{lemma}
  \label{lem3:compactness_p=1}
  Let $T\geq 0$, $K \subset X$ be compact, and $\Gamma:[0,T]\times X\to X$ be continuous. Then for every $\varepsilon > 0$
  there exists $\Gamma_\varepsilon: [0,T] \times K\to X$ such that the following conditions are satisfied:\smallskip
  \begin{enumerate}
  \item[(i)]
    For all $\xi \in K$ and $t\in[0,T]$ one has $\|\Gamma(t,\xi) - \Gamma_\varepsilon(t,\xi)\| < \varepsilon$.\smallskip
  \item[(ii)]
    The image of $\Gamma_\varepsilon$ is finite. In particular, there exist finitely many disjoint Borel
    sets $\Delta_i \subset [0,T]$, $i = 1, \dots, N$ and $S_j \subset K$, $j = 1, \dots, M$
    as well as $\xi_{ij} \in X$ such that
    \begin{equation*}
      \label{eq:pc-approximation}
      \Gamma_\varepsilon(t,\xi) = \sum_{i=1}^N\sum_{j=1}^M \chi_{\Delta_i}(t)\chi_{S_j}(\xi)\xi_{ij}
    \end{equation*}  
    for all $\xi \in K$ and $t\in[0,T]$.
  \end{enumerate}
\end{lemma}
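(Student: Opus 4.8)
The plan is to exploit compactness of the product $[0,T]\times K$ in order to reduce the statement to a single uniform continuity argument. Since $[0,T]$ and $K$ are both compact, their product, equipped with the metric $d_\infty$ from \eqref{eq:product-metric_inf}, is compact as well, and the restriction of $\Gamma$ to this compact set is uniformly continuous. Hence, given $\varepsilon>0$, I would first extract a $\delta>0$ such that $\|\Gamma(t,x)-\Gamma(t',x')\|<\varepsilon$ whenever $|t-t'|<\delta$ and $\|x-x'\|<\delta$, i.e.~whenever $d_\infty\big((t,x),(t',x')\big)<\delta$.

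Next I would construct two \emph{Borel partitions} of fine mesh. For the time axis, I partition $[0,T]$ into half-open intervals $\Delta_1=[0,t_1)$, $\Delta_2=[t_1,t_2)$, $\dots$, $\Delta_N=[t_{N-1},T]$ of length less than $\delta$; these are pairwise disjoint Borel sets covering $[0,T]$, each with $\diam\Delta_i<\delta$. For the state space I use total boundedness of the compact set $K$ to cover it by finitely many balls $B_{\delta/2}(\eta_1),\dots,B_{\delta/2}(\eta_M)$, and then disjointify them by setting $S_1:=B_{\delta/2}(\eta_1)\cap K$ and $S_j:=\big(B_{\delta/2}(\eta_j)\cap K\big)\setminus(S_1\cup\cdots\cup S_{j-1})$. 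By construction the $S_j$ are pairwise disjoint Borel subsets of $K$ which cover $K$, and each satisfies $\diam S_j<\delta$ by the triangle inequality.

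With these partitions in hand, for every pair $(i,j)$ with $S_j\neq\emptyset$ I pick representatives $\tau_i\in\Delta_i$ and $\zeta_j\in S_j$ and set $\xi_{ij}:=\Gamma(\tau_i,\zeta_j)$ (the values $\xi_{ij}$ attached to empty pieces are irrelevant, since their characteristic functions vanish identically, and may be chosen arbitrarily). I then define $\Gamma_\varepsilon$ by the stated formula. Because the $\Delta_i$ partition $[0,T]$ and the $S_j$ partition $K$, for each $(t,x)\in[0,T]\times K$ exactly one product $\chi_{\Delta_i}(t)\chi_{S_j}(x)$ is nonzero, so the double sum collapses to the single value $\xi_{ij}=\Gamma(\tau_i,\zeta_j)$; this makes $\Gamma_\varepsilon$ well-defined with finite image contained in $\{\Gamma(\tau_i,\zeta_j)\}$, which is precisely property (ii). Property (i) then follows from the uniform continuity estimate: since $t$ and $\tau_i$ both lie in $\Delta_i$ while $x$ and $\zeta_j$ both lie in $S_j$, we have $|t-\tau_i|<\delta$ and $\|x-\zeta_j\|<\delta$, whence $\|\Gamma(t,x)-\Gamma_\varepsilon(t,x)\|=\|\Gamma(t,x)-\Gamma(\tau_i,\zeta_j)\|<\varepsilon$.

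The argument is essentially routine; the only point requiring care—and the step I expect to be the main (if minor) obstacle—is the disjointification of the cover of $K$, which is what guarantees that the $S_j$ form a genuine Borel \emph{partition}. Without disjointness the double sum in the defining formula would accumulate several terms and $\Gamma_\varepsilon$ would fail to be single-valued; the successive set differences above remedy this while preserving both Borel measurability and the diameter bound $\diam S_j<\delta$.
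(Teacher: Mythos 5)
Your proposal is correct and follows essentially the same route as the paper's proof in Appendix B: uniform continuity of $\Gamma$ on the compact product $[0,T]\times K$, a partition of $[0,T]$ into short intervals, a disjointified finite ball cover of $K$, and evaluation of $\Gamma$ at one representative per piece. The only cosmetic differences are that the paper uses balls of radius $\delta$ with the net points $\eta_j$ themselves as representatives (rather than radius $\delta/2$ with a point chosen inside each $S_j$) and fixes the right endpoints $iT/N$ as the time representatives; both variants yield the same estimate.
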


\begin{proof}
  Let $K \subset X$ be compact and $\varepsilon > 0$. For convenience, choose $d_\infty$ as metric
  on $[0,T] \times K$, cf.~\eqref{eq:product-metric_inf}. Due to the compactness of $[0,T] \times K$
  the restriction $\Gamma\big|_{[0,T] \times K}$ is uniformly continuous 
  meaning there exists $\delta > 0$ such that
  \begin{equation*}
    \big\|\Gamma(t',\xi') - \Gamma(t,\xi)\big\| \leq \varepsilon
  \end{equation*}
  for $d_\infty\big((t',\xi'),(t,\xi)\big)< \delta$, i.e.~for $|t'-t| < \delta$  and $\|\xi'-\xi\| < \delta$.
  Again the compactness of $K$ implies that there exists a finite $\delta$-net $\eta_1, \dots, \eta_M$ of $K$.
  Then, choosing $N \in \NN$ such that $T/N < \delta$ allows to define $\Gamma_{\varepsilon}$ as follows:
  \begin{equation*}
    \Gamma_{\varepsilon}(t,\xi) :=
    \begin{cases}
      \Gamma(\tfrac{T}{N}, \eta_1) & \text{for $t \in \big[0,\tfrac{T}{N}\big]$ and $\xi \in K \cap B_\delta(\eta_1)$,}\\[2mm]
      \Gamma(\tfrac{2T}{N}, \eta_1) & \text{for $t \in \big(\tfrac{T}{N},\tfrac{2T}{N}\big]$ and $\xi \in K \cap B_\delta(\eta_1)$,}\\
      \vdots & \vdots\\
      \Gamma(T, \eta_1) & \text{for $t \in \big(\tfrac{(N-1)T}{N}, T\big]$ and $\xi \in K \cap B_\delta(\eta_1)$,}\\[2mm]
      \Gamma(\tfrac{T}{N}, \eta_2) & \text{for $t \in \big[0,\tfrac{T}{N}\big]$ and
        $\xi \in K \cap \Big(B_\delta(\eta_2) \setminus B_\delta(\eta_1)\Big)$,}\\
      \vdots & \vdots\\
      \Gamma(T, \eta_M) & \text{for $t \in \big(\tfrac{(N-1)T}{N},T\big]$ and
        $\xi \in K \cap \Big(B_\delta(\eta_M) \setminus \bigcup_{k=1}^{M-1}B_\delta(\eta_k)\Big)$.}
    \end{cases}
  \end{equation*}
  The straightforward proof that $\Gamma_{\varepsilon}$ satisfies (i) is left to the reader. Finally, setting
  $\Delta_1 := \big[0,\frac{T}{N}\big]$, $\Delta_i := \big(\frac{(i-1)T}{N},\frac{iT}{N}\big]$ for $i = 2, \dots, N$,
  $S_j := K \cap \Big(B_\delta(\eta_j) \setminus \bigcup_{k=1}^{j-1}B_\delta(\eta_k)\Big)$ for $j = 1, \dots, M$
  and $\xi_{ij} := \Gamma(\frac{iT}{N},\eta_j)$ yields the desired representation \eqref{eq:pc-approximation}
  of $\Gamma_{\varepsilon}$.
\end{proof}


\section*{Acknowledgments}
I would like to thank my colleague and co-author of several publications, Frederik vom Ende, for his careful proofreading
and valuable comments. His feedback substantially enhanced the presentation of the subject matter. Moreover,
  I also appreciate the reviewers' careful proofreading and their suggestions in particular concerning missing literature.
  Finally, I would like to thank Jochen Gl\"uck for pointing out a beautiful alternative proof of Theorem \ref{thm:main_1}
  (see Remark \ref{rem:alternative_proof}).

 \newpage


\end{document}